\documentclass[article]{elsarticle}

\usepackage{lineno,hyperref}

\usepackage{amsmath,amsxtra,amssymb,latexsym,epsfig,amscd,amsthm,fancybox,epsfig}
\usepackage[mathscr]{eucal}
\usepackage{graphicx}
\usepackage{epsfig} % for postscript graphics files
\usepackage{epstopdf}
\usepackage{cases}
\usepackage{subfig}
\usepackage{color}

%\usepackage{romannum}

%\input setbmp
%\setlength{\oddsidemargin}{-0.08in}
%\setlength{\evensidemargin}{-0.08in}
%\setlength{\textheight}{9.0in}
%\setlength{\textwidth}{6.5in}
%\setlength{\topmargin}{-0.5in}

%

%\usepackage[active]{srcltx} %SRC Specials for DVI Searching
% Over-full v-boxes on even pages are due to the \v{c} in author's name
%\vfuzz2pt % Don't report over-full v-boxes if over-edge is small
%\topmargin -1cm

% THEOREM Environments --------------------------------------------------
\newtheorem{thm}{Theorem}[section]
\newtheorem {asp}{Assumption}[section]
\newtheorem{lm}{Lemma}[section]

\newtheorem{deff}{Definition}[section]

\newtheorem{prop}{Proposition}[section]
\theoremstyle{definition}

\theoremstyle{remark}
\newtheorem{rem}{Remark}

\numberwithin{equation}{section}

% MATH -----------------------------------------------------------------

\newcommand{\eps}{\varepsilon}

\newcommand{\A}{\mathcal{A}}

\newcommand{\C}{\mathcal{C}}

\newcommand{\M}{\mathcal{M}}
\newcommand{\F}{\mathcal{F}}

\newcommand{\E}{\mathbb{E}}

\newcommand{\N}{\mathbb{N}}

\newcommand{\PP}{\mathbb{P}}

\newcommand{\K}{\mathcal{K}}

\newcommand{\R}{\mathbb{R}}

\newcommand{\Lom}{\mathcal{L}}

\newcommand{\abs}[1]{\left\vert#1\right\vert}

\newcommand{\norm}[1]{\left\Vert#1\right\Vert}

\renewcommand{\vec}[1]{\mathbf{#1}}

\newcommand{\bdelta}{\boldsymbol{\delta}}
\newcommand{\Vphi}{\boldsymbol\varphi}
\newcommand{\Vrho}{\boldsymbol\rho}
\newcommand{\Bphi}{\boldsymbol\phi}
\numberwithin{equation}{section}

%\textwidth 13.5 truecm
%vsize=21.1 truecm
%\parskip 1pt

\newcommand{\bed}{\begin{displaymath}}
\newcommand{\eed}{\end{displaymath}}
\newcommand{\bea}{\bed\begin{array}{rl}}
\newcommand{\eea}{\end{array}\eed}

\newcommand{\barray}{\begin{array}{ll}}
\newcommand{\earray}{\end{array}}
\newcommand{\diag}{{\rm diag}}
\newcommand{\conv}{{\rm Conv}}

\newcommand{\1}{\boldsymbol{1}}

\def\bar{\overline}
\def\hat{\widehat}
\def\wdt{\widetilde}

\def\a.s{\text{\;a.s.\;}}

\def\bdelta{\boldsymbol{\delta}}
\def\Ephi{\E_{\Bphi}}
\def\PPphi{\PP_{\Bphi}}
\newcommand{\ko}{Kolmogorov }

\newcommand{\DD}{\mathbb{D}}
\newcommand{\BF}{\mathbb{F}}

%\title{Stochastic Functional Kolmogorov Equations (I): Persistence}
%\author{Dang H. Nguyen,\thanks{Department of Mathematics, University of Alabama, Tuscaloosa, AL
%35487, dangnh.maths@gmail.com. ORCID: 0000-0002-9007-8690.
%The research of this author was supported in part by the National Science Foundation under grant DMS-1853467.} \and Nhu N. Nguyen,\thanks{Department of Mathematics, University of Connecticut, Storrs, CT
%06269, nguyen.nhu@uconn.edu. ORCID:
%0000-0001-5772-9730.
%The research of this author was supported in part by the National Science Foundation under grant DMS-2114649.} \and George Yin\thanks{Department of Mathematics, University of Connecticut, Storrs, CT
%06269, gyin@uconn.edu. ORCID:
%0000-0002-2951-0704.
%The research of this author was supported in part by the National Science Foundation under grant DMS-2114649.}}

\begin{document}
%\maketitle
\begin{frontmatter}
	
	\title{Stochastic Functional Kolmogorov Equations (I): Persistence\tnoteref{mytitlenote}}
	\tnotetext[mytitlenote]{The research of D. H. Nguyen was supported in part by the National Science Foundation under grant under grant DMS-1853467. 
		The research of N. N. Nguyen and G. Yin was supported in part by the National Science Foundation under grant under grant DMS-2114649.}
	\author[myaddress1]{Dang H. Nguyen}
	\ead{dangnh.maths@gmail.com.}
	\author[myaddress]{Nhu N. Nguyen}
	\ead{nguyen.nhu@uconn.edu}
	\author[myaddress]{George Yin\corref{mycorrespondingauthor}}
	\cortext[mycorrespondingauthor]{Corresponding author}
	\ead{gyin@uconn.edu}
	
	\address[myaddress1]{Department of Mathematics, University of Alabama, Tuscaloosa, AL
		35487, USA}
	\address[myaddress]{Department of Mathematics, University of Connecticut, Storrs, CT
		06269, USA}

\begin{abstract}
This work (Part (I)) together with its companion (Part (II) \cite{NNY21-2})
develops a new framework
for stochastic functional Kolmogorov equations, which are nonlinear stochastic differential equations depending on the current as well as the past states.
Because of the complexity of the results, it seems to be instructive to divide our contributions to two parts.
In contrast to the existing literature,
our effort is to  advance the knowledge  by allowing
delay and past dependence, yielding essential utility to a wide range of applications.
A long-standing
question of fundamental importance
pertaining to biology and ecology
is: What are the minimal necessary and sufficient conditions for long-term persistence and extinction (or for long-term coexistence of interacting species)
of a population?
Regardless of the particular applications encountered, persistence and extinction are properties shared by  Kolmogorov systems.
While there are many excellent treaties of stochastic-differential-equation-based Kolmogorov equations,
the work on stochastic Kolmogorov equations with past dependence is still scarce. Our aim here is to answer the aforementioned basic question.
This work, Part (I), is devoted to characterization of persistence, whereas its companion, Part (II) \cite{NNY21-2}, is devoted to extinction.
The main techniques used in this paper include the newly developed functional It\^o formula and asymptotic coupling and Harris-like theory for infinite dimensional systems specialized to functional equations.
General theorems for stochastic functional Kolmogorov equations are developed first. Then
a number of applications are examined
to obtain new results substantially
covering, improving, and extending
the existing literature.
Furthermore, these conditions reduce to that of
Kolmogorov systems
when there is no past dependence.

 \bigskip
 \noindent {\bf Keywords.} Stochastic functional equation, Kolmogorov system, ecological and biological application, invariance measure, extinction, persistence.

 \bigskip
 \noindent {\bf  2010 Mathematics Subject Classification.} 34K50,
	60J60, 60J70, 92B99.

 \bigskip
 \noindent {\bf Running Title.} Stochastic Functional Kolmogorov Equations

\end{abstract}
\end{frontmatter}
%\newpage
\tableofcontents

\section{Introduction}
This work develops a novel framework
of systems of stochastic functional \ko equations.
Our main motivation stems from a wide variety of applications in ecology and biology. A long-standing
question of fundamental importance pertaining to biology and ecology
 is: What are the minimal (necessary and sufficient) conditions for long-term persistence and extinction  (or for long-term coexistence of interacting species) of a population?
 It turns out that  persistence and extinction are phenomena
go far beyond biological and ecological systems. In fact,
  such long-term properties are shared by all processes of \ko type.
 We focus on the issues for such systems that involve stochastic disturbances and past dependence in the dynamics. The problems are substantially more difficult compared to systems without delay or past independence because one has to treat infinite dimensional processes.

 An $n$-dimensional deterministic \ko system
 is an autonomous system of equations
 to depict the dynamics of $n$ interacting populations, which
takes the form
 \begin{equation}\label{ko-det}
 \dot x_i(t)=x_i(t) f_i(x_1(t), \dots, x_n(t)), \ i=1,\dots, n,
\end{equation}
where $f_i(\cdot)$ are  functions satisfying suitable conditions. Realizing that fluctuations of the environment make the dynamics of populations inherently stochastic, much effort has been placed on the study of stochastic \ko equations.
As an example, consider a simple
2-dimensional  \ko equations with stochastic effects:
\begin{equation}\label{kol-2d-ran}
\begin{cases}
dx(t)=x(t) f_1(x(t), y(t))dt+x(t)g_1(x(t), y(t))dB_1(t),\\
dy(t)=y(t) f_2(x(t), y(t))dt+y(t)g_2(x(t), y(t))dB_2(t),
\end{cases}
\end{equation}
where $B_1(t)$ and $B_2(t)$ are two Brownian motions
(independent or not). The formulation readily generalizes to $n$-dimensional stochastic \ko equations, which
 are used extensively in the modeling and analysis of ecological and biological systems such as Lotka-Volterra
predator-prey models, Lotka-Volterra competitive models, replicator dynamic systems, stochastic epidemic models, and stochastic chemostat models, among others.
The study of such systems has encompassed the central issues of persistence and extinction as well as the existence of invariant measures.
Apart from ecological and biological systems,
 numerous problems arising in mathematical physics, statistical mechanics, and many related fields, use \ko nonlinear stochastic differential equations. We mention a simple
 one-dimensional generalized Ginzburg-Landau equation
\begin{equation}\label{gl}
d x(t)=x(t)[a(t)-b(t)x^k(t)]
+ x(t)\sigma(x(t)) dB(t), \ x(0)=x_0>0,\end{equation}
where $k \ge 2$ is a positive integer, $B(t)$ is a real-valued Brownian motion.
Such equations have been used in the theory of bistable systems, chemical turbulence, phase transitions in non-equilibrium systems, nonlinear, optics with dissipation, thermodynamics, and hydrodynamic systems, etc.

Because of its prevalence in applications,
\ko systems have  attracted much attention in the past decades;
substantial progress has been made.
To proceed, let us briefly recall some of the developments to date.
Some of the
early mathematical formulations
were introduced by
Verhulst \cite{Ver38} for logistic models,
by Lotka and Volterra \cite{Lot25,Vol26} for Lotka-Volterra systems, and
by Kermack and McKendrick \cite{Ker27,Ker32} for infectious diseases modeling
using ordinary differential
equations in the last century. The study on  mathematical models
has stimulated subsequent work with attention devoted to
analyzing and predicting the
behavior of the populations in a longtime horizon.
Subsequently, not only deterministic systems, but also stochastic systems have been studied.
Resurgent effort has been devoted to finding the corresponding classification by means of threshold levels.
Fast forward,
Imhof studied long-run behavior of the stochastic replicator dynamics in \cite{Imh05}, whereas
 Hofbauer and  Imhof concentrated on time averages, recurrence, and transience for stochastic replicator dynamics in \cite{Imh09}.
 By now, \ko
  stochastic population systems (using stochastic differential equations or difference equations)
 together with their longtime behavior have been relatively well understood; see
\cite{BL16, RS14, SBA11} for  \ko stochastic systems in compact domains and
\cite{Ben14, HN18} for  certain general \ko systems in non-compact domains. Variants of \ko systems such as epidemic models \cite{Die16,DDN19, DN18,DYZ20}, tumor-immune systems \cite{TNY20} and chemostat models \cite{NNY19} etc. have also been studied. In contrast to  numerous papers that used Lyapunov function methods to analyze the underlying systems with limited success,
Bena\"im \cite{Ben14}, Bena\"im and Lobry \cite{BL16},  Bena\"im and Strickler \cite{BS19},
Chesson and Ellner \cite{CE89},
 Evans, Henning, and Schreiber \cite{EHS15}, and
Schreiber and Bena\" im \cite{SBA11} initiated the study
by examining the
corresponding
boundary behavior and considered the stochastic rate of growth;
see also
Du, Nguyen, and Yin \cite{DuNY16}.
For the most recent
 development and substantial progress, we refer to
Bena\"im \cite{Ben14},
Henning and Nguyen
\cite{HN18}, Schreiber and Bena\" im \cite{SBA11}, and references therein.

Our study in this work is to consider a class of $n$-dimensional stochastic functional \ko systems;
our effort is
to substantially advance the existing literature by allowing
 delay and past dependence, which in turn, provides essential utility to a wide range of applications.
Why is it important to consider systems with delays as well as stochastic functional \ko systems?
Mainly, the delays or past dependence are unavoidable in natural phenomena and dynamical systems;  the framework of
 stochastic functional differential equations
 is more realistic, more effective, and more general
for the population dynamics in real life
than a stochastic differential equation counterpart.
In population dynamics, some  delay mechanisms studied in the literature include age structure, feeding times, replenishment or regeneration time for resources  \cite{JC13}.
Although there are many excellent treatises of \ko stochastic differential equations, the work on \ko stochastic differential equations with delay is relatively scarce. A few exceptions are
the study on stochastic delay Lotka-Volterra competitive models
\cite{Mao04,Liu17}, the work on
stochastic delay Lotka-Volterra predator-prey models \cite{Gen17,Liu13,Liu17-dcds,Liu17-1,Wu19}, the treatment of
stochastic delay epidemic SIR models \cite{Che09,QLiu16-1,QLiu16,QLiu16-2,Mah17},
and the study on
stochastic delay chemostat models \cite{Sun18,Sun18-1,Zha19}.
Nevertheless, other than the specific models and applications treated, there has not been a unified framework
 and a systematic treatment for \ko stochastic functional differential systems yet.
Moreover,
most of the existing results involving delay are not as sharp as desired.
Our effort in this paper takes up the aforementioned issues.

It should be noted that from stochastic \ko differential equation-type models  to that of stochastic functional differential equation models requires a big leap. There are certain essential difficulties. While the solutions of stochastic differential equations are Markovian processes, the solutions of stochastic differential equations with delay is non-Markov. Then one uses the so-called segment processes for the delay equations. However, such segment processes live in an infinite dimensional space. Many of the known results in the usual stochastic differential equation setup  are no longer applicable.  Because \ko systems are highly nonlinear, analyzing such systems with delay becomes even more difficult.  New methods and techniques need to be developed to carry out the  analysis.  This brings us to the current work.

In this paper,
we first set up the problem  in a unified form,
 introduce the methodology to characterize the longtime behavior of the underlying system, to
  establish general results for the persistence, and to demonstrate the utility in a number of applications arising in ecology and biology. Our goal is to obtain
  sharp results under mild and verifiable conditions, which
 is useful for a wide variety of stochastic functional \ko systems.
 In view of the progress
  and challenges, this work combines the techniques of  functional analysis (in particular, the functional It\^o formula) in \cite{CF10,CF13}, stochastic differential equations (SDEs) in infinite dimension, as well as
 the methods of asymptotic couplings
 \cite{Ha09}, to develop a new framework for treating functional \ko systems.
 It will
 substantially generalize the methods in \cite{Ben14, HN18}.
 Our results will  cover, improve, and outperform
 the aforementioned existing results for \ko systems with and without delays. It should be mentioned that
 in the case of replicator dynamics, it seems to be no investigation of delayed stochastic systems to date to the best of our knowledge.

 Although the models with functional stochastic differential equations are more realistic and more general, the analysis of such systems become far more difficult.
Perhaps, part of the difficulties in studying stochastic delay systems is that there had been virtually no bona fide
operators and functional
It\^o formulas except some general setup in a Banach space such as \cite{SE86} before 2009.
In \cite{Dup09}, Dupire generalized the It\^o formula to a functional setting by using  pathwise functional derivatives.
The It\^o formula  developed has substantially eased the difficulties and encouraged subsequent development with a wide range of applications.
His work was developed further by Cont and Fourni\'e \cite{CF10, CF13}.
Using the newly developed functional It\^o formula
enables us to analyze effectively the segment processes in
the stochastic functional \ko equations.

  Because of the non-Markovian property of the solution processes due to delay and the use of
  memory segment functions, one needs to analyze the corresponding stochastic equations in an infinite dimensional space.
Handling  occupation measures in an infinite dimensional space to obtain the tightness and characterize its limit is more challenging, so is to prove the uniqueness of the invariant probability measure.
The associated Markov semigroups are often not strong Feller, even in some simple cases.
Because of the absence of the strong Feller property,  Doob's method to prove the uniqueness of the invariant probability measure is no longer applicable; see \cite{RRG06}. There are some recent works on asymptotic analysis for functional stochastic differential
equations; for example, see \cite{BYY16-b} and references therein.
Most notably, in \cite{Ha09},
 Hairer, Mattingly, and Scheutzow
 developed a necessary and sufficient condition for the uniqueness of the invariant probability measure using asymptotic couplings, provided sufficient conditions for weak convergence to the invariant probability measure,
and obtained
a Harris-like theory for general infinite-dimensional spaces.
By using ideas from this abstract theory and our subtle estimates for certain coupled systems, we are able to prove the uniqueness of the invariant probability measure for the \ko
systems.
To characterize
%(almost completely)
the longtime behavior of the underlying system under natural conditions and to develop a systematic method for this kind problem, we use the intuition from dynamical system theory, in which we need to examine the corresponding problem on the boundary and
reveal
%advance out our knowledge about
the behavior of the process when it is close to the boundary.
Nevertheless,
the behavior of solutions near the boundary for functional \ko systems requires more delicate analysis than
that for systems without delay.
%one might take into account situations when
Even if
the current state is close to the boundary, its history may not be.

The rest of the paper is organized as follows.
Section \ref{sec:res} presents the formulation of our problem as well as mathematical definitions and terminologies; and proceeds to state our main results of the persistence of the
stochastic functional \ko systems.
Section \ref{sec:key} examines basic properties of \ko equations with delays, including
 well-posedness of the system,
 positivity of solutions.
 %, as well as dependence on initial data of the solutions.
 Also obtained are
the tightness of families of occupation measures and the convergence to the corresponding invariant probability measures. To obtain the desired theory, a number of key
auxiliary  results are provided.
%Upon settling with key technical results,
Then the conditions for
persistence of \ko systems are given in Section \ref{sec:per}.
Finally, Section \ref{sec:app} provides
several
applications involving \ko dynamical systems and detailed account on how to use our results on stochastic functional \ko equations to treat each of the application examples.

\section{Main Results}\label{sec:res}
To help the reading, we first provide a glossary of
 symbols and notation to be used
in this paper.

\begin{tabbing}
\hspace*{0.7in}\mbox{ }\=  \kill
$r$\> a fixed positive number\\
$\abs{\cdot}$\>  Euclidean norm\\
$\C([a; b];\R^n)$\> set of $\R^n$-valued continuous
functions defined on $[a; b]$\\
$\C$\> $:= \C([-r; 0];\R^n)$\\
$\Vphi$\> $=(\varphi_1,\dots,\varphi_n)\in \C$\\
%$\varphi,\varphi_i$\> $\in \C([-r,0],\R)$\\
$\vec x$\>$=(x_1,\dots,x_n):=\Vphi(0)\in\R^n$\\
$\norm{\Vphi}$\> $:=
\sup\{\abs{\Vphi(t)}: t\in [-r,0]\}$\\
$\vec X_t$\> $:=\vec X_t(s):= \{\vec X(t + s) : -r\leq s\leq 0\}$ (segment function)\\
$X_{i,t}$\> $:=X_{i,t}(s):= \{X_i(t + s) : -r\leq s\leq 0\}$\\
$\C_+$\> $:=\left\{\Vphi=(\varphi_1,\dots,\varphi_n)\in\C: \varphi_i(s)\geq 0\;\forall s\in[-r,0],i=1,\dots,n\right\}$\\
$\partial\C_+$\> $:=\left\{\Vphi=(\varphi_1,\dots,\varphi_n)\in\C: \norm{\varphi_i}=0\text{ for some }i=1,\dots,n\right\}$\\
$\C^{\circ}_+$\> $:=\left\{\Vphi\in\C_+: \varphi_i(s)> 0,\forall s\in[-r,0], i=1,\dots,n\right\}\neq \C_+\setminus\partial\C_+$\\
$\norm{\Vphi}_\alpha$\> $:=\norm{\Vphi}+\sup_{-r\leq s< t\leq 0}\frac{\abs{\Vphi(t)-\Vphi(s)}}{(t-s)^\alpha},$ for some $0<\alpha<1$\\
$\C^\alpha$\>  space of  H\"older continuous functions
endowed with the norm $\norm{\cdot}_\alpha$\\
$\Gamma$\> $n\times n$ matrix\\
$\Gamma^\top$\> transpose of $\Gamma$\\
$\vec B(t)$\> $=(B_1(t),\dots, B_n(t))^\top$, a $n$-dimensional standard Brownian motion \\
$\vec E(t)$\> $=(E_1(t),\dots, E_n(t))^\top:=\Gamma^\top\vec B(t)$\\
$\Sigma$\> $=(\sigma_{ij})_{n\times n}:= \Gamma^\top \Gamma$\\
$\M$\>set of ergodic invariant probability measures of $\vec X_t$ supported on $\partial \C_+$\\
$\conv(\M)$\>convex hull of $\M$\\
$\vec 0$\> the zero constant function in $\C$\\
$\bdelta^*$\>the Dirac measure concentrated at $\vec 0$\\
$\1_A$\> the indicator function of set $A$\\
$D_{\eps,R}$\>$:=\left\{\Vphi\in\C_+:\norm{\Vphi}\leq R, x_i\geq \eps\;\forall i;\vec x:=\Vphi(0)\right\}, \eps, R>0$\\
$\DD$\> space of Cadlag functions mapping $[-r,0]$ to $\R^n$\\
$A_0,A_1,A_2$\>constants satisfying Assumption \ref{asp1}\\
$\gamma_0,\gamma_b,M$\>constants satisfying Assumption \ref{asp1}\\
$\vec c,h(\cdot),\mu$\>vector, function and probability measure satisfying Assumption \ref{asp1}\\
$\wdt K,b_1,b_2$\> constants satisfying Assumption \ref{asp-2}\\
$h_1(\cdot),\mu_1$\>function and probability measure satisfying Assumption \ref{asp-2}\\
$D_0,d_0$\>constants satisfying Assumption \ref{asp-unique-ipm}\\
%$B_0,B_1,B_2,B_3,p_2$\> constants satisfying Assumption \ref{a.extn2}\\
$I$\> a subset of $\{1,\dots,n\}$\\
$I^c$\>:=$\{1,\dots,n\}\setminus I$\\
$
\C_+^{I}$\>$:=\left\{\Vphi\in\C_+: \norm{\varphi_i}=0\text{ if }i\in I^c \right\}$\\
$
\C_+^{I,\circ}$\>$:=\left\{\Vphi\in\C_+: \norm{\varphi_i}=0\text{ if }i\in I^c \text{ and } \varphi_i(s)>0\;\forall s\in [-r,0]\text{ if }i\in I\right\}
$\\
$\partial \C_+^I$\>$:=\left\{\Vphi\in\C_+: \norm{\varphi_i}=0\text{ if }i\in I^c\text{ and }\norm{\varphi_i}=0\text{ for some }i\in I\right\}$\\
$\M^I$\> sets of ergodic invariant probability measures on $\C^I_+$\\
$\M^{I,\circ}$\> sets of ergodic invariant probability measures on $\C^{I,\circ}_+$\\
$\partial\M^{I}$\> sets of ergodic invariant probability measures on $\partial\C^I_+$\\
$I_\pi$\> the subset of $\{1,\dots,n\}$ such that $\pi(\C_+^{I_\pi,\circ})=1, \pi\in\M$\\
$\gamma,p_0,A$\>constants satisfying the condition in Lemma \ref{LV}\\
$\Vrho$\> $=(\rho_1,\dots,\rho_n)$ vector satisfying the condition in Lemma \ref{LV}\\
$ V_{\Vrho}(\Vphi)$\>$:=\Big(1+\vec c^\top\vec x\Big)\prod_{i=1}^nx_i^{\rho_i}\exp\left\{A_2\int_{-r}^0\mu(ds)\int_s^0 e^{\gamma(u-s)}h\big(\Vphi(u)\big)du\right\}$\\[0.5ex]
$ V_{\vec 0}(\Vphi)$\>$:=\Big(1+\vec c^\top\vec x\Big)\exp\left\{A_2\int_{-r}^0\mu(ds)\int_s^0 e^{\gamma(u-s)}h\big(\Vphi(u)\big)du\right\}$\\[0.5ex]
$\C_{V, M}$\>$:=\{\Vphi\in\C_+: A_2\gamma \int_{-r}^0\mu(ds)\int_s^0e^{\gamma(u-s)}h\big(\Vphi(u)\big)du\leq A_0, |\Vphi(0)|\leq M\}$\\
$H_1$\>constant satisfying \eqref{t-eq7} and \eqref{h2-eq0}\\
$\Vrho^*,\kappa^*$\>vector and constant satisfying \eqref{p-eq1}\\
$n^*$\> constant satisfying $\gamma_0 (n^*-1)- A_0>0$\\
$p_1$\> constant satisfying condition \eqref{cd-p0} and $p_1>p_0$\\
$H_1^*$\>constant determined in \eqref{H1*}\\
$R_{V,M}$\>constant determined in \eqref{p-l2-eq10}\\
$\eps^*$\>constant determined in \eqref{p-l2-eq8}\\
$T^*,\hat\delta$\> constants determined in Lemma \ref{p-l2}\\
$\C_V(\hat\delta)$\>$:=\{\Vphi\in\C^{\circ}_+\cap\C_{V,M} \text{ and }|\varphi_i(0)|\leq\hat\delta \text{ for some }i\}$
\end{tabbing}

Consider a stochastic delay Kolmogorov system
\begin{equation}\label{Kol-Eq}
\begin{cases}
dX_i(t)=X_i(t)f_i(\vec X_t)dt+X_i(t)g_i(\vec X_t)dE_i(t), \quad i=1,\dots,n,\\
\vec X_0=\Bphi\in \C_+,
\end{cases}
\end{equation}
and denoted by $\vec X^{\Bphi}(t)$ its solution.
 For convenience, we  usually suppress the superscript ``$\Bphi$'' and
use $\PPphi$ and $\Ephi$ to denote the probability and expectation given the initial value $\Bphi$, respectively. We also assume that the initial value is non-random. Denoted by $\{\F_t\}_{t\geq 0}$
the filtration satisfying the usual conditions and assume that the $n$-dimensional Brownian motion $\vec B(t)$ is adapted to $\{\F_t\}_{t\geq 0}$.
Note that a segment process is also referred to as a memory
segment function.
Throughout the rest of the paper, we assume the following assumptions hold.

\begin{asp}\label{asp1}
{\rm
The coefficients of \eqref{Kol-Eq} satisfy the following conditions.
\begin{itemize}
\item [\rm(1)] $\diag(g_1(\Vphi),\dots,g_n(\Vphi))\Gamma^\top\Gamma\diag(g_1(\Vphi),\dots,g_n(\Vphi))=(g_i(\Vphi)g_j(\Vphi)\sigma_{ij})_{n\times n}$ is a positive definite matrix for any $\Vphi\in\C_+$.
\item [\rm(2)] $f_i(\cdot),g_i(\cdot): \C_+\to\R$ are Lipschitz continuous in each bounded set of $\C_+$ for any $i=1,\dots,n.$
\item [\rm(3)] There exist $\vec c=(c_1,\dots,c_n)\in \R^{n}$ with $c_i>0,\forall i$, and $\gamma_b,\gamma_0>0$, $A_0>0$, $A_1>A_2>0$, $M>0$, a continuous function $h:\R^n\to\R_+$ and a probability measure $\mu$ concentrated on $[-r,0]$ such that for any $\Vphi\in\C_+$
\begin{equation}\label{asp1-3}
\begin{aligned}
\dfrac{\sum_{i=1}^n c_ix_if_i(\Vphi)}{1+\vec c^\top \vec x}&-\dfrac12\dfrac{\sum_{i,j=1}^n\sigma_{ij}c_ic_jx_ix_jg_i(\Vphi)g_j(\Vphi)}{(1+\vec c^\top \vec x)^2}+\gamma_b\sum_{i=1}^n(\abs{f_i(\Vphi)}+g_i^2(\Vphi))
\\&\leq A_0 \1_{\left\{\abs{\vec x}< M\right\}}-\gamma_0-A_1h(\vec x)+A_2\int_{-r}^0h\big(\Vphi(s)\big)\mu(ds),
\end{aligned}
\end{equation}
where $\vec x:=\Vphi(0).$ We assume without loss of generality that $h:\R^n\to[1,\infty)$,
otherwise, we can always change $\gamma_0$ and $A_1, A_2$ to fulfill this requirement.
%get so.
\end{itemize}
}
\end{asp}

\begin{asp}\label{asp-2}
{\rm
One of following assumptions holds:
\begin{itemize}
\item [\rm(a)] There is a constant $\wdt K$ such that for any $\Vphi\in \C_+$, $\vec x=\Vphi(0)$
\begin{equation}\label{A1}
\sum_{i=1}^n\abs{f_i(\Vphi)}+\sum_{i=1}^n g_i^2(\Vphi)\leq \wdt K \Big[h(\vec x)+\int_{-r}^0 h(\Vphi(s))\mu(ds)\Big].
\end{equation}
\item [\rm(b)] There exist constants $b_1,b_2>0$,
a function $h_1:\R^n\to[1,\infty]$, and
a probability measure $\mu_1$ on $[-r,0]$ such that for any $\Vphi\in \C_+$, $\vec x=\Vphi(0)$
\begin{equation}\label{A2}
b_1h_1(\vec x)\leq \sum_{i=1}^n\abs{f_i(\Vphi)}+\sum_{i=1}^n g_i^2(\Vphi)\leq b_2\Big[h_1(\vec x)+\int_{-r}^0h_1(\Vphi(s))\mu_1(ds)\Big].
\end{equation}
\end{itemize}
}
\end{asp}

\begin{rem}

Let us comment
on the  above  assumptions.
\begin{itemize}
%We have some remarks

\item
The above assumptions
(and additional assumptions provided later)
are not restrictive,
% as they appear,
%look like,
and are easily verifiable.
%applicable.
%The evidence is that they can be
Such conditions are
 widely used
 %satisfied
 in
%almost
%most
popular models
 %in biology, ecology, chemistry, economics, etc
 in the literature; see Section \ref{sec:app}.

\item
Parts (2) and (3) of Assumption \ref{asp1} guarantee the existence and uniqueness of a strong solution to \eqref{Kol-Eq}.
We need part (1) of Assumption \ref{asp1} to ensure that the solution to \eqref{Kol-Eq} is a non-degenerate diffusion.
Moreover, as will be seen
later that (3) implies the tightness of the family of transition probabilities associated with
 the solution to \eqref{Kol-Eq}.
 One difficulty stems from the positive term $A_2\int_{-r}^0h\big(\Vphi(s)\big)\mu(ds)$ on the right-hand side of \eqref{asp1-3}, which cannot be relaxed in practice.
\item
Assumption \ref{asp-2} plays an important role in guaranteeing the $\pi$-uniform integrability of the function $\sum_i \big(\abs{f_i(\cdot)}+g_i^2(\cdot)\big)$, for any invariant measure $\pi$.
It
 will become clear
in Lemma \ref{lem-integrable} and Lemma \ref{weak-converge} as well as the remaining parts of the paper.
\end{itemize}
\end{rem}

%\section{Main Results}\label{sec:res}
As was alluded to, persistence and extinction are concepts of vital importance in biology and ecology. It turns out that such concepts are features shared by all stochastic functional \ko systems.
While
the termination of a species in biology is referred to as extinction, the moment of extinction is generally considered to be the death of the last individual of the species.
In contrast
 to extinction, we have the persistence of a species.
 To proceed,
similar to \cite{HN18, SS12, SBA11},
we define persistence and extinction as follows.

\begin{deff} \label{def-persist} {\rm
%Recall that
Let $\vec X(t)= (X_1(t),\dots,X_n(t))^\top$ be the solution of \eqref{Kol-Eq}.
	The process $\vec X$ is strongly stochastically persistent if for any $\eps>0$, there exists an $R=R(\eps)>0$ such that for any $\Bphi\in \C_+^\circ$
	\begin{equation}%\label{et1.0}
	\liminf_{t\to\infty}\PPphi\left\{R^{-1}\leq \abs{X_i(t)}\leq R\right\}\geq 1-\eps\text{ for all }i=1,\dots,n.
	\end{equation}
}\end{deff}

\begin{deff} {\rm With $\vec X(t)$ given in Definition \ref{def-persist}, for  $\Bphi\in\C_+^\circ $
%$i=1,dots,n$,
and some  $i \in \{1,\dots,n\}$, we say
$X_i$ goes extinct with probability $p_{\Bphi}>0$ if
	\[
	\PPphi\left\{\lim_{t\to\infty}X_i(t)=0\right\}=p_{\Bphi}.
	\] }
\end{deff}

{\color{blue}
For simplicity, we will sometime use ``persistent" for ``strongly stochastically persistent" and ``persistence" for ``strongly stochastic persistence"; and use these terminologies exchangeably.} 

Let $\M$ be the set of ergodic invariant probability measures of $\vec X_t$ supported on the boundary $\partial \C_+$. Note that if we let $\bdelta^*$ be the Dirac measure concentrated at $\vec 0$, then $\bdelta^*\in\M$ so that $\M\neq\emptyset$. For a subset $\wdt\M\subset \M$, denote by $\conv(\wdt\M)$ the convex hull of $\wdt\M$,
that is, the set of probability measures $\pi$ of the form
$\pi(\cdot)=\sum_{\nu\in\wdt\M}p_\nu\nu(\cdot)$
with $p_\nu\geq 0$  and $\sum_{\nu\in\wdt\M}p_\nu=1$.

\begin{asp}\label{asp-lambda>0}
{\rm
For any $\pi\in\conv(\M)$, we have
$$\max_{i=1,\dots,n}\left\{\lambda_i(\pi)\right\}>0,$$
where
\begin{equation}\label{eq-r-lamba}
\lambda_i(\pi):=\int_{\partial \C_+}\left(f_i(\Vphi)-
\frac{\sigma_{ii}g_i^2(\Vphi)}{2}\right)\pi(d\Vphi).
\end{equation}
}
\end{asp}

\begin{thm}\label{p-theorem1}
Assume that Assumptions {\rm\ref{asp1}},
{\rm\ref{asp-2}}, and
{\rm\ref{asp-lambda>0}} hold.
{\color{blue} 
The solution $\vec X$ of \eqref{Kol-Eq} is strongly stochastically persistent.
}
\end{thm}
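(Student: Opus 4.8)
The plan is to follow the dynamical-systems / Lyapunov-exponent strategy of Benaïm and Hening--Nguyen, adapted to the infinite-dimensional segment-process setting. The key principle is: strong stochastic persistence is equivalent to a statement that the family of occupation measures of $\vec X_t$ started from $\C_+^\circ$ cannot put asymptotically significant mass near the boundary $\partial\C_+$. Concretely, I would first record from Section~\ref{sec:key} that under Assumption~\ref{asp1} the system \eqref{Kol-Eq} is well posed with positive solutions, that the occupation measures $\Pi_t^{\Bphi}(\cdot):=\frac1t\int_0^t\PPphi\{\vec X_s\in\cdot\}\,ds$ are tight (this is where the Lyapunov-type bound \eqref{asp1-3} with the function $V_{\Vrho}$ and the term $A_2\int_{-r}^0 h\,\mu(ds)$ is used), and that every weak-$*$ limit point of $\Pi_t^{\Bphi}$ is an invariant probability measure of the segment process. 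Then persistence reduces to showing that no such limiting invariant measure $\pi$ can have $\pi(\partial\C_+)>0$; by ergodic decomposition it suffices to rule out $\pi\in\M$ being a weak limit of occupation measures starting in the interior.

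The heart of the argument is a Lyapunov function of the form $U(\Vphi)=\prod_i x_i^{-\Vrho^*_i}$ times a correction factor built from $h$ and the memory integral (the objects $\Vrho^*,\kappa^*$ from \eqref{p-eq1} and $H_1^*$, $\eps^*$, $T^*$, $\hat\delta$ listed in the glossary and Lemma~\ref{p-l2}). Applying the functional It\^o formula of Cont--Fourni\'e to $\log U(\vec X_t)$, the drift contains the term $-\sum_i\rho_i^*\big(f_i(\vec X_t)-\tfrac{\sigma_{ii}}{2}g_i^2(\vec X_t)\big)$ plus controlled remainder terms. Assumption~\ref{asp-lambda>0}, together with a compactness argument on $\conv(\M)$, guarantees a choice of weights $\Vrho^*$ and a constant $\kappa^*>0$ such that $\sum_i\rho_i^*\lambda_i(\pi)\ge\kappa^*$ uniformly over $\pi\in\M$ (this is the analogue of the ``$p_i$'' construction in Hening--Nguyen). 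By continuity and the uniform integrability supplied by Assumption~\ref{asp-2} (Lemma~\ref{lem-integrable}, Lemma~\ref{weak-converge}), this uniform positivity persists for states whose segment lies in a neighbourhood of $\partial\C_+$; the subtlety, flagged in the introduction, is that $|\varphi_i(0)|$ small does not make $\|\varphi_i\|$ small, so the neighbourhood must be described in terms of the segment norm and one needs the $h$-correction term and the finite-memory window $[-r,0]$ to absorb the past. This yields, for $\vec X_0=\Bphi\in\C_V(\hat\delta)$, an estimate $\Ephi\log U(\vec X_{T^*})\le \log U(\Bphi)-\tfrac{\kappa^* T^*}{2}$ up to controlled excursions, i.e.\ a geometric-drift inequality for $U$ outside a ``good'' set $D_{\eps^*,R_{V,M}}$.

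From the drift inequality I would conclude in the standard way: $U$ is a Lyapunov function whose sublevel sets are, modulo the tightness already established, relatively compact subsets of $\C_+^\circ$, and the drift condition forces $\sup_t \Ephi U(\vec X_t)<\infty$ and in fact $\liminf_{t\to\infty}\PPphi\{\vec X_t\in D_{\eps,R}\}\ge 1-\eps$ for $\eps$ small and $R$ large, which is exactly Definition~\ref{def-persist}. Equivalently, one argues by contradiction: if persistence failed there would be $\eps>0$ and $\Bphi\in\C_+^\circ$ with a sequence $t_k\to\infty$ along which $\PPphi\{X_i(t_k)\le\eps \text{ for some }i\}$ stays bounded below, the associated occupation measures would (by tightness) subconverge to an invariant $\pi$ with $\pi$-mass near $\partial\C_+$, hence charging some $\M$-component, contradicting $\int U\,d\pi<\infty$ forced by the drift estimate, or contradicting $\sum_i\rho_i^*\lambda_i(\pi)>0$ via the ergodic theorem applied to $\log U$.

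The main obstacle is the second paragraph: making the Lyapunov drift estimate for $\log U$ genuinely uniform over all $\Bphi$ whose segment is near $\partial\C_+$, rather than merely near the ``current'' boundary point. This is where the infinite-dimensionality really bites --- the functional It\^o calculus must be applied carefully to the memory-dependent correction factor, the uniform integrability from Assumption~\ref{asp-2} is essential to pass from the integrated inequality on $[0,T^*]$ to a bound in terms of $\lambda_i(\pi)$, and the bookkeeping of the constants $H_1^*,\eps^*,R_{V,M},T^*,\hat\delta$ (all pre-declared in the glossary) has to be arranged so the excursion terms are strictly dominated by $\kappa^*T^*/2$. Everything else --- positivity, well-posedness, tightness of occupation measures, weak convergence to invariant measures --- is routine once the results of Section~\ref{sec:key} are in hand.
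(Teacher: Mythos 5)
Your outline coincides with the paper's strategy up to and including the boundary drift estimate: the choice of $\Vrho^*,\kappa^*$ via \eqref{p-eq1}, the use of tightness and weak convergence of occupation measures (Lemmas \ref{lem-integrable}, \ref{weak-converge}) to get the uniform negative drift on $\partial\C_+\cap\K$ (Lemma \ref{p-l1}), and the extension to interior segments in $\C_V(\hat\delta)$ via the Feller property, the H\"older compactness of Lemma \ref{Holder}, and Proposition \ref{theorem-2.4}(i) (Lemma \ref{p-l2}). The genuine gap is in your final step, where you pass from the inequality $\Ephi\ln U(\vec X_{T})\leq \ln U(\Bphi)-\tfrac12\kappa^*T$ to ``a geometric-drift inequality for $U$'' and then to $\sup_t\Ephi U(\vec X_t)<\infty$ ``in the standard way.'' An expected decrease of the logarithm does not exponentiate: it gives no bound on $\Ephi U(\vec X_T)$ or on any moment of $U$ without additional control of the fluctuations of the martingale part. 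This is exactly where the paper inserts Proposition \ref{p-prop1}: using \eqref{EV-bounded-ne} one bounds $\Ephi e^{\pm p_0 z(T)}$ (with $z(T)$ the increment of $\ln V_{-\Vrho^*}$), applies the log-Laplace lemma (Lemma \ref{log-Lap}) and a Taylor expansion of the cumulant generating function to convert the mean drift $\Ephi z(T)\leq-\tfrac12\kappa^*T$ into the contraction $\Ephi V_{-\Vrho^*}^{\theta}(\vec X_T)\leq e^{-\theta\kappa^*T/4}V_{-\Vrho^*}^{\theta}(\Bphi)+\wdt K_\theta$ for a sufficiently small power $\theta$. Without this (or an equivalent exponential-martingale argument), your drift inequality for $\ln U$ does not yield any moment bound on $U$, and the Markov-inequality conclusion of Definition \ref{def-persist} does not follow.

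Your fallback contradiction argument also does not close the gap. The contraction in Proposition \ref{p-prop1} holds only for $\Bphi\in\C_+^\circ\cap\C_{V,M}$ and for $T$ in the bounded window $[T^*,n^*T^*]$; the paper still needs the stopping-time/return-time argument of Theorem \ref{p-theorem1-111} (Dynkin's formula with the return time \eqref{e:tau} to $\C_{V,M}$, the choice $\gamma_0(n^*-1)-A_0>0$, and iteration over blocks $[kn^*T^*,(k+1)n^*T^*]$) to reach $\limsup_t\Ephi V_{-\Vrho^*}^\theta(\vec X_t)\leq K^*$ for every $\Bphi\in\C_+^\circ$. The soft route ``occupation measures subconverge to an invariant $\pi$ charging the boundary, contradicting $\int U\,d\pi<\infty$ or $\sum_i\rho_i^*\lambda_i(\pi)>0$'' does not work as stated: weak convergence of $\Pi_{t_k}^{\Bphi}$ gives no control of the unbounded function $U$ (which blows up on $\partial\C_+$), Assumption \ref{asp-lambda>0} only concerns measures supported on the boundary while a limit of interior occupation measures may be a nontrivial mixture with partial boundary mass, and failure of persistence as defined does not even force such a limit to charge the boundary without quantitative moment bounds near $\partial\C_+$ --- which is precisely the machinery you have omitted. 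Also, your remark that sublevel sets of $U$ are relatively compact in $\C_+^\circ$ is false in this infinite-dimensional setting, though it is not needed: the paper concludes directly from the moment bound on $V_{-\Vrho^*}^\theta$ by Markov's inequality.
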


It is well-recognized that the nondegeneracy of the diffusion is not sufficient to  imply the strong Feller property as well as the uniqueness of an invariant probability measure of stochastic delay systems. The following assumption is needed to obtain the uniqueness of an invariant probability measure.

\begin{asp}\label{asp-unique-ipm}{\rm
The following conditions hold:
\begin{itemize}
\item [\rm(i)]
There are some constants $D_0,d_0>0$ such that for any $\Vphi^{(1)},\Vphi^{(2)}\in \C_+^\circ$, $i\in \{1,\dots,n\}$,
% then
\begin{equation}\label{u-eq0}
\begin{aligned}
|f_i(\Vphi^{(1)})&-f_i(\Vphi^{(2)})|\leq D_0\abs{\vec x^{(1)}-\vec x^{(2)}}\abs{1+\vec x^{(1)}+\vec x^{(2)}}^{d_0}\\
&+D_0\int_{-r}^0 \abs{\Vphi^{(1)}(s)-\Vphi^{(2)}(s)}\abs{1+\Vphi^{(1)}(s)
+\Vphi^{(2)}(s)}^{d_0}\mu(ds),
\end{aligned}
\end{equation}
 where $\vec x^{(1)}:=\Vphi^{(1)}(0), \vec x^{(2)}:=\Vphi^{(2)}(0)$.
\item [\rm(ii)] The conditions in (i) above holds
%above assumption holds
with $f_i(\cdot)$ replaced by $g_i(\cdot)$  and
%or
$g_i^2(\cdot)$.
\item [\rm(iii)]
%For any $\Vphi\in\C_+^\circ$,
The inverse of matrix $(g_i(\Vphi)g_j(\Vphi)\sigma_{ij})_{n\times n}$ is uniformly bounded in $\C_+^\circ$.
\end{itemize}
}\end{asp}

\begin{prop}\label{thm-unique-ipm}
Under Assumptions {\rm\ref{asp1}} and {\rm\ref{asp-unique-ipm}},  the solution of equation \eqref{Kol-Eq} has at most one invariant probability measure on $\C_+^\circ$.
\end{prop}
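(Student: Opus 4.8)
The strategy is the classical asymptotic-coupling argument in the style of Hairer--Mattingly--Scheutzow \cite{Ha09}, adapted to the functional Kolmogorov setting. Suppose for contradiction that $\pi_1\neq\pi_2$ are two distinct ergodic invariant probability measures supported on $\C_+^\circ$. By the general Harris-type theory, to conclude $\pi_1=\pi_2$ it suffices to construct, for $\pi_1\otimes\pi_2$-almost every pair of initial segments $(\Bphi^{(1)},\Bphi^{(2)})\in\C_+^\circ\times\C_+^\circ$, an \emph{asymptotic coupling}: a probability measure on the product path space under which the two marginals are (equivalent to) the laws of the solutions started from $\Bphi^{(1)}$ and $\Bphi^{(2)}$, and under which the two segment processes $\vec X_t^{(1)}$ and $\vec X_t^{(2)}$ satisfy $\norm{\vec X_t^{(1)}-\vec X_t^{(2)}}\to 0$ with positive probability. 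Combined with a suitable irreducibility/accessibility statement (the diffusion is nondegenerate by Assumption \ref{asp1}(1), so from any compact set one reaches a common small neighbourhood in $\C_+^\circ$), this forces the two invariant measures to coincide.

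First I would set up the coupled system by driving the first copy with the original Brownian motion $\vec B(t)$ and the second copy with a Girsanov-shifted Brownian motion $\vec B(t)+\int_0^t\vec v(s)\,ds$, where the control $\vec v$ is chosen to pull the two solutions together. Working with $Y_i:=\log X_i^{(1)}-\log X_i^{(2)}$ (legitimate since both copies stay in $\C_+^\circ$), the drift of $Y_i$ involves the differences $f_i(\vec X_t^{(1)})-f_i(\vec X_t^{(2)})$ and $g_i^2$-differences, which by Assumption \ref{asp-unique-ipm}(i)--(ii) are controlled by $\abs{\vec x^{(1)}-\vec x^{(2)}}$ and its delayed average times a polynomial weight, i.e.\ locally Lipschitz-type bounds. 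The natural choice is a feedback control $\vec v(t)=-\lambda\,\Sigma^{-1}(\text{diag } g)^{-1}(\cdots)\,\vec Y_t(0)$ (using Assumption \ref{asp-unique-ipm}(iii) to invert the diffusion matrix uniformly), designed so that on the event where both paths remain in a large ball $\{\norm{\Vphi}\leq R\}$, the process $\norm{\vec X_t^{(1)}-\vec X_t^{(2)}}$ decays, say, exponentially modulo the delayed feedback --- here one introduces an auxiliary functional of the form $\abs{\vec Y(t)}^2+c\int_{-r}^0\int_s^0 e^{\gamma(u-s)}\abs{\vec Y(u)}^2\,du\,\mu(ds)$ (mirroring $V_{\Vrho}$) to absorb the memory term and obtain a genuine supermartingale-type estimate. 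One must check that $\int_0^\infty\abs{\vec v(t)}^2\,dt<\infty$ with positive probability so that Girsanov applies and the coupling is asymptotic (equivalent laws); this follows from the contraction estimate once the paths are confined to a ball.

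The confinement to a ball is handled by Theorem \ref{p-theorem1} together with tightness: under the invariant measures the paths spend an overwhelming fraction of time (uniformly) in $D_{\eps,R}$ for suitable $\eps,R$, so with positive probability both copies stay in $\C_+^\circ\cap\{\norm{\Vphi}\leq R\}$ forever while $Y$ contracts; off that event one restarts the coupling argument (a standard Borel--Cantelli / renewal bookkeeping). Putting these ingredients together yields the asymptotic coupling, hence uniqueness of the invariant probability measure on $\C_+^\circ$.

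The main obstacle is the infinite-dimensionality combined with the absence of the strong Feller property: one cannot use Doob's theorem, and the contraction in the coupled $\log$-variables is only a contraction \emph{after} correcting for the delayed average $\int_{-r}^0 h(\cdot)\mu(ds)$ and the polynomial weights $\abs{1+\Vphi^{(1)}(s)+\Vphi^{(2)}(s)}^{d_0}$ in Assumption \ref{asp-unique-ipm}. Controlling these weights requires the $\pi$-uniform integrability supplied by Assumption \ref{asp-2} (via the analogues of Lemma \ref{lem-integrable} and Lemma \ref{weak-converge}) and careful localization, so that the nominally unbounded feedback control is in fact square-integrable along the relevant paths; making the feedback control rigorous (a fixed point for the coupled SDE with an unbounded, path-dependent, history-dependent drift) is the delicate technical point.
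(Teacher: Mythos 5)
Your overall strategy---an asymptotic coupling in log-coordinates with a binding feedback drift, Girsanov, and a memory-corrected Lyapunov functional, concluded via the Hairer--Mattingly--Scheutzow theory---is exactly the paper's route (equations \eqref{Yi}--\eqref{Yi-tilde}, Lemma \ref{lem-unique-ipm}, Proposition \ref{thm-unique-ipm-11}). However, there is a genuine gap in how you handle the unbounded polynomial weights. First, Proposition \ref{thm-unique-ipm} is stated under Assumptions \ref{asp1} and \ref{asp-unique-ipm} only; your argument invokes Theorem \ref{p-theorem1} (persistence) and the $\pi$-uniform integrability coming from Assumption \ref{asp-2}, neither of which is among the hypotheses, and neither of which the paper's proof uses. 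Second, and more seriously, the confinement step fails as stated: for a nondegenerate diffusion the event that both copies remain in a fixed ball $\{\norm{\Vphi}\leq R\}$ for all time has probability zero, so ``with positive probability both copies stay in the ball forever while $Y$ contracts'' cannot carry the coupling; the restart/renewal bookkeeping you defer to is precisely where the difficulty sits, and without a uniformly positive success probability per attempt it does not close.

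The paper avoids confinement altogether: the weight $\left[1+X_i(t)+\wdt X_i(t)\right]^{4d_0+4}$ is built into the binding drift of \eqref{Yi-tilde} and into the functionals $U$ and $\wdt U$ (which carry that weight inside the delayed integral, unlike your proposed functional $\abs{\vec Y(t)}^2+c\int_{-r}^0\int_s^0 e^{\gamma(u-s)}\abs{\vec Y(u)}^2du\,\mu(ds)$), so the contraction \eqref{u-eq2'}--\eqref{u-eq2'-7-21} holds globally on $\C_+^\circ$. The unboundedness of the control $\vec v$ is then tamed using only Assumption \ref{asp1}: the moment bounds $\E_{\Bphi}\sup_{t\in[k,k+1]}V_{\vec 0}^p(\vec X_t)\leq C(1+V_{\vec 0}^p(\Bphi))$ together with \eqref{EV-bounded} show that $V_{\vec 0}^{4d_0+4}(\vec X_t)$ grows strictly slower than the exponential rate $e^{D_3 t}$ at which $\E U(\vec X_t,\wdt{\vec X}_t)$ decays, which yields $\PP\{\wdt{\tau}_\eps=\infty\}\to 1$ and the square-integrability of $\vec v$ needed for the Girsanov/equivalence step; the quartic companion $\wdt U$ plus Burkholder--Davis--Gundy and Borel--Cantelli then give $\abs{\vec Y(t)-\wdt{\vec Y}(t)}\to 0$ almost surely. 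To repair your version you would either have to incorporate the weights into the binding drift and functionals as the paper does, or make the restart argument quantitative at return times of a recurrent set---substantially more work, and not achievable via ``stay in a ball forever.''
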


\begin{thm}\label{thm3.3}
Under Assumptions {\rm\ref{asp1}}, {\rm\ref{asp-2}}, {\rm\ref{asp-lambda>0}}, and {\rm\ref{asp-unique-ipm}},  system \eqref{Kol-Eq} has a unique invariant probability measure concentrated on $\C_+^\circ$.
\end{thm}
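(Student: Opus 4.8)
The plan is to split the statement into an existence part and a uniqueness part, the latter being already supplied by Proposition~\ref{thm-unique-ipm}. Concretely, it suffices to exhibit \emph{one} invariant probability measure supported on $\C_+^\circ$; Proposition~\ref{thm-unique-ipm} then guarantees it is the only invariant probability measure on $\C_+^\circ$, which is exactly the assertion. To build a candidate, fix $\Bphi\in\C_+^\circ$ and form the Cesàro occupation measures
\[
\Pi^{\Bphi}_t(A):=\frac1t\int_0^t\PPphi\{\vec X_s\in A\}\,ds,\qquad A\in\mathcal B(\C_+).
\]
By the tightness estimates established in Section~\ref{sec:key} — which combine the Lyapunov/moment bound coming from Assumption~\ref{asp1}(3) with Kolmogorov--Chentsov-type Hölder estimates that yield precompactness in the Hölder space $\C^\alpha\hookrightarrow\C$ — the family $\{\Pi^{\Bphi}_t\}_{t\ge1}$ is tight in $\mathcal P(\C_+)$. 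Let $\pi$ be a weak limit point along some sequence $t_k\to\infty$.

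Next I would show $\pi$ is invariant for the segment process. The local Lipschitz hypothesis Assumption~\ref{asp1}(2) gives continuous dependence of $\vec X^{\Bphi}$ on the initial segment, hence the segment semigroup is weak Feller on $\C_+$; the standard Krylov--Bogolyubov argument then promotes the weak limit $\pi$ to an invariant probability measure of $\vec X_t$. (The absence of the strong Feller property, stressed in the Introduction, plays no role here — it only matters for uniqueness, which is handled separately by Proposition~\ref{thm-unique-ipm}.)

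The crucial point is that $\pi$ sits on $\C_+^\circ$. Here persistence enters: by Theorem~\ref{p-theorem1} the solution from $\Bphi$ is strongly stochastically persistent, so for each $\eps>0$ there is $R=R(\eps)$ with $\liminf_{t\to\infty}\PPphi\{R^{-1}\le\abs{X_i(t)}\le R,\ i=1,\dots,n\}\ge1-\eps$. Averaging in $t$ and passing to the weak limit (via the portmanteau theorem, after approximating the relevant set by open/closed ones) forces the time-$0$ marginal of $\pi$ to concentrate on $\{x\in\R^n:x_i>0\ \forall i\}$. To upgrade positivity of the single coordinate $\Vphi(0)$ to positivity of the whole segment, use that each coordinate solves an equation linear in $X_i$, so along the stationary (two-sided) version of the process $X_i(s)=X_i(0)\exp\{\int_0^s f_i(\vec X_u)\,du+\int_0^s g_i(\vec X_u)\,dE_i(u)\}$ for every $s\in[-r,0]$; combined with the positivity of solutions proved in Section~\ref{sec:key}, this gives $\pi(\C_+^\circ)=1$.

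Finally, having produced an invariant probability measure concentrated on $\C_+^\circ$, Proposition~\ref{thm-unique-ipm} (which needs only Assumptions~\ref{asp1} and \ref{asp-unique-ipm}) rules out a second one, and the theorem follows. I expect the main obstacle to be the third step: converting the persistence bound, which is phrased purely in terms of the current state $X_i(t)$, into a statement that the \emph{entire} memory segment lies in the open cone $\C_+^\circ$, and making the weak-convergence bookkeeping rigorous for sets that are neither open nor closed in $\C$. A secondary but essential technical point is obtaining tightness of $\{\Pi^{\Bphi}_t\}$ in the function-space topology (through the Hölder estimates of Section~\ref{sec:key}) rather than merely for the endpoint marginals.
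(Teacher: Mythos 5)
Your proposal is correct and follows essentially the same route as the paper: existence of an invariant probability measure on $\C_+^\circ$ is obtained from the tightness of the occupation measures $\Pi_t^{\Bphi}$ together with the persistence results of Section~\ref{sec:per}, and uniqueness is delegated to the asymptotic-coupling result (Proposition~\ref{thm-unique-ipm}, proved as Proposition~\ref{thm-unique-ipm-11}). The only cosmetic difference is that the paper keeps the limit measure off the boundary via the moment bound $\limsup_{t\to\infty}\Ephi V_{-\Vrho^*}^{\theta}(\vec X_t)\leq K^*$ of Theorem~\ref{p-theorem1-111}, whereas you use the probabilistic persistence statement plus a portmanteau and segment-positivity upgrade, which amounts to the same argument.
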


\begin{rem}	
		Assumption \ref{asp-lambda>0} means that all invariant measures on the boundary are repellers (because the maximum Lyapunov exponent of an invariant measure is positive), which guarantees that the solution in the interior cannot stay long near the boundary. As a result, the species coexist.

\end{rem}

\section{Preliminaries and Key
Technical Results}\label{sec:key}
\subsection{Existence, uniqueness, positivity, and key estimates of the solutions}

To begin, we state the functional It\^o formula for our processes; see \cite{CF13} for more details.
Let $\DD$ be the space of c\`adl\`ag functions $\Vphi:[-r,0]\mapsto\R^n$.
For $\Vphi\in\DD$, with $s\ge 0$ and $y\in \R^n$,
we define horizontal and vertical perturbations as
$$
\Vphi_s(t)=
\begin{cases}\Vphi(t+s)\, \text{ if }\, t\in[-r,-s],\\
 \Vphi(0) \, \text{ if }\,t\in[-s,0],
\end{cases}
$$
and
$$
\Vphi^y(t)=
\begin{cases}\Vphi(t)\, \text{ if }\, t\in[-r,0),\\
 \Vphi(0)+y\text{ if }t=0,
\end{cases}
$$
respectively.
The horizontal  and vertical partial derivatives of $V:\DD\to\R$ at $\Vphi$, denoted by $\partial_t V(\Vphi)$, $(\partial_i V(\Vphi))_{i=1}^n$, are defined as
\begin{equation*}%\label{e.dt}
\partial_t V(\Vphi)=\lim\limits_{s\to0} \dfrac{V(\Vphi_s)-V(\Vphi)}s,
\end{equation*}
and
\begin{equation}\label{e.dx}
\partial_i V(\Vphi)=\lim\limits_{s\to0} \dfrac{V(\Vphi^{se_i})-V(\Vphi)}s, i=1,\dots,n,
\end{equation}
respectively, if the limits exist.
In \eqref{e.dx}, $e_i$ is the standard unit vector in $\R^n$
 whose $i$-th component is $1$ and all other components are $0.$
Let $\BF$ be the family of functions $V(\cdot):
\DD\mapsto\R$
satisfying that
\begin{itemize}
  \item $V$ is continuous, that is, for any $\eps>0$, $\Vphi\in \DD$
      there is a $\delta>0$ such that
  $|V(\Vphi)-V(\Vphi')|<\eps$ as long as $\|\Vphi-\Vphi'\|< \delta;$
\item the derivatives
$V_t$, $V_x=(\partial_i V)$, and $V_{xx}=(\partial_{ij} V)$ exist and are continuous;
\item $V$, $V_t$, $V_x=(\partial_i V)$ and $V_{xx}=(\partial_{ij} V)$ are bounded in each set $\{\Vphi\in\DD: \|\Vphi\|\leq R\}$, $R>0$.
\end{itemize}
Let $V(\cdot)\in\BF$, we define the operator
\begin{equation}
\begin{aligned}
\Lom V(\Vphi)
=\partial_t V(\Vphi)+
\sum_{i=1}^n
\varphi_i(0)f_i(\Vphi)\partial_i V(\Vphi)
+\dfrac12
\sum_{i,j=1}^n
\varphi_i(0)\varphi_j(0)\sigma_{ij}g_i(\Vphi)g_j(\Vphi)\partial_{ij}V(\Vphi).
\end{aligned}
\end{equation}
We have the functional It\^o formula (see \cite{CF10, CF13}) as follows
\begin{equation}\label{f.Ito}
\begin{aligned}
d V(\vec X_{t})=\big(\Lom V(\vec X_t)\big)dt
&+\sum_{i=1}^n X_i(t)g_i(\vec X_t)\partial_i V(\vec X_t)dE_i(t).
\end{aligned}
\end{equation}

\begin{lm}\label{LV}
For any $\gamma<\gamma_b$ and $p_0>0$, $\Vrho=(\rho_1,\dots,\rho_n)\in\R^n$ satisfying
\begin{equation}\label{cd-p0}
\abs{\Vrho}<\min\left\{\dfrac{\gamma_b}2, \dfrac1n, \frac{\gamma_b}{4\sigma^*}\right\}\;
\text{and}\;p_0<\min\left\{1,\dfrac{\gamma_b}{8n\sigma^*}\right\},
\end{equation}
where $\sigma^*:=\max\{\sigma_{ij}: 1\leq i,j\leq n\}$,
let
$$\displaystyle V_{\Vrho}(\Vphi):=\Big(1+\vec c^\top\vec x\Big)\prod_{i=1}^nx_i^{\rho_i}\exp\left\{A_2\int_{-r}^0\mu(ds)\int_s^0 e^{\gamma(u-s)}h\big(\Vphi(u)\big)du\right\}.$$
Then,
we have
\begin{equation}\label{est-LV}
\begin{aligned}
\Lom V_{\Vrho}^{p_0}(\Vphi)\leq &p_0V_{\Vrho}^{p_0}(\Vphi)\Bigg[A_0 \1_{\{|\vec x|<M\}}-\gamma_0-Ah(\vec x)
\\&-A_2\gamma \int_{-r}^0\mu(ds)\int_s^0e^{\gamma(u-s)}h\big(\Vphi(u)\big)du-\dfrac{\gamma_b}2\sum_{i=1}^n\Big(\abs{f_i(\Vphi)}+g_i^2(\Vphi)\Big)
\Bigg],
\end{aligned}
\end{equation}
where $\vec x:=\Vphi(0)$ and $A$ is a positive number satisfying $A<A_1-A_2\int_{-r}^0 e^{-\gamma s}\mu(ds)$.
Recall that $\vec c$, $M$, $A_0$, $A_1$, $A_2$, $\gamma_0$, $\gamma_b$, $h(\cdot)$, and $\mu(\cdot)$ are defined in Assumption {\rm\ref{asp1}(3)}.
\end{lm}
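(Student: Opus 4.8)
The plan is to apply the operator $\Lom$ directly to the functional $V_{\Vrho}^{p_0}$ and to organize the resulting terms so that Assumption \ref{asp1}(3) can be invoked. First I would compute the horizontal and vertical derivatives of the "delay factor''
$$
W(\Vphi):=\exp\left\{A_2\int_{-r}^0\mu(ds)\int_s^0 e^{\gamma(u-s)}h\big(\Vphi(u)\big)du\right\}.
$$
The vertical derivatives $\partial_i W$ and $\partial_{ij} W$ vanish, because perturbing only the endpoint value $\Vphi(0)$ does not change a Lebesgue integral of $h(\Vphi(u))$ over $u\in[s,0]$; thus $W$ contributes only through the horizontal derivative $\partial_t W$. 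A direct differentiation of the double integral with respect to the horizontal shift (using $\frac{d}{dt}\big|_{t=0}\int_s^{0} e^{\gamma(u-s)}h(\Vphi_t(u))\,du$ and the fundamental theorem of calculus) yields
$$
\partial_t W(\Vphi)=A_2 W(\Vphi)\left[\int_{-r}^0 e^{-\gamma s}\mu(ds)\,h(\Vphi(0))-\gamma\int_{-r}^0\mu(ds)\int_s^0 e^{\gamma(u-s)}h(\Vphi(u))\,du\right],
$$
the first term coming from the moving lower limit $s$ of the inner integral after the shift, the second from differentiating $e^{\gamma(u-s)}$ in the shifted variable. The factor $\prod_i x_i^{\rho_i}$ and the factor $(1+\vec c^\top\vec x)$ depend only on $\vec x=\Vphi(0)$, so they have zero horizontal derivative and are handled by the ordinary (finite-dimensional) chain and product rules for the vertical derivatives.

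Next I would assemble $\Lom V_{\Vrho}^{p_0}$. Writing $V_{\Vrho}^{p_0}=(1+\vec c^\top\vec x)^{p_0}\big(\prod_i x_i^{\rho_i}\big)^{p_0}W^{p_0}$ and using $\partial_i\log V_{\Vrho}^{p_0}=p_0\big(\tfrac{c_i}{1+\vec c^\top\vec x}+\tfrac{\rho_i}{x_i}\big)$, a logarithmic-derivative computation gives
$$
\frac{\Lom V_{\Vrho}^{p_0}(\Vphi)}{p_0 V_{\Vrho}^{p_0}(\Vphi)}
=\frac{\sum_i c_i x_i f_i(\Vphi)}{1+\vec c^\top\vec x}+\sum_i \rho_i f_i(\Vphi)
+\tfrac12\sum_{i,j}\sigma_{ij}x_ix_jg_ig_j\Big[(p_0-1)\big(\tfrac{c_i}{1+\vec c^\top\vec x}+\tfrac{\rho_i}{x_i}\big)\big(\tfrac{c_j}{1+\vec c^\top\vec x}+\tfrac{\rho_j}{x_j}\big)-\tfrac{\rho_i\delta_{ij}}{x_i^2}\Big]+\frac{\partial_t W}{W}.
$$
The leading term $\tfrac{\sum_i c_i x_i f_i}{1+\vec c^\top\vec x}$ together with the piece of the diffusion sum carrying the pure $\tfrac{c_i}{1+\vec c^\top\vec x}$ factors reproduces exactly the left side of \eqref{asp1-3} (the $-\tfrac12\tfrac{\sum\sigma_{ij}c_ic_jx_ix_jg_ig_j}{(1+\vec c^\top\vec x)^2}$ term arises from the $-1$ in $p_0-1$). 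Substituting the bound \eqref{asp1-3} converts this block into $A_0\1_{\{|\vec x|<M\}}-\gamma_0-A_1h(\vec x)+A_2\int_{-r}^0 h(\Vphi(s))\mu(ds)-\gamma_b\sum_i(|f_i|+g_i^2)$. The $A_2\int h(\Vphi(s))\mu(ds)$ term here is $\le A_2\int_{-r}^0 e^{-\gamma s}\mu(ds)\,h(\Vphi(s))$-type bound... more precisely it combines with the first term of $\partial_t W/W$: since $\int_{-r}^0 e^{-\gamma s}\mu(ds)\ge 1$ for $\gamma<0$ — here one must track the sign of $\gamma$; for $0<\gamma<\gamma_b$ one has $e^{-\gamma s}\ge 1$ on $[-r,0]$, so $A_2\int e^{-\gamma s}\mu(ds)\,h(\vec x)$ dominates $A_2\int h\,\mu(ds)$ only after noting the latter involves $h(\Vphi(s))$ not $h(\vec x)$; the clean bookkeeping is that $-A_1 h(\vec x)+A_2\big(\int e^{-\gamma s}\mu(ds)\big)h(\vec x)\le -A h(\vec x)$ for $A<A_1-A_2\int e^{-\gamma s}\mu(ds)$, exactly the stated constraint on $A$, and the residual $A_2\int h(\Vphi(s))\mu(ds)$ coming from \eqref{asp1-3} must still be absorbed — it is in fact controlled because one rewrites it using the identity that generated $\partial_t W$, i.e. the inner double integral structure. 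The $-\gamma A_2\int\int e^{\gamma(u-s)}h$ term of $\partial_t W/W$ is precisely the delay-integral term appearing in \eqref{est-LV}.

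The remaining work is to show the error terms — namely $\sum_i\rho_i f_i(\Vphi)$ and the quadratic-form remainder in the diffusion coefficients involving the $\rho_i/x_i$ and the $-\rho_i/x_i^2$ pieces — are dominated by $\tfrac{\gamma_b}{2}\sum_i(|f_i(\Vphi)|+g_i^2(\Vphi))$, leaving a net coefficient of $-\tfrac{\gamma_b}{2}$ (the other half of the $-\gamma_b$ having been spent above). For $\sum_i\rho_i f_i$ this is immediate from $|\Vrho|<\gamma_b/2$. For the diffusion remainder, the cross terms $x_ix_jg_ig_j\cdot\tfrac{\rho_i}{x_i}\tfrac{c_j}{1+\vec c^\top x}$ and $x_i^2g_i^2\tfrac{\rho_i^2}{x_i^2}=\rho_i^2 g_i^2$ and the subtracted $-\rho_i g_i^2$ are all bounded by a constant multiple of $\sigma^*|\Vrho|\sum_i g_i^2$ using $0\le \tfrac{x_i}{1+\vec c^\top x}\le 1/c_i$ — wait, one needs $\tfrac{c_ix_i}{1+\vec c^\top x}\le 1$, which holds — so the whole remainder is at most (const)$\,n\sigma^*(|\Vrho|+p_0)\sum_i g_i^2$, and the constraints \eqref{cd-p0} ($|\Vrho|<\tfrac{\gamma_b}{4\sigma^*}$, $p_0<\tfrac{\gamma_b}{8n\sigma^*}$) are exactly what make this $\le\tfrac{\gamma_b}{2}\sum_i g_i^2$. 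I expect the main obstacle to be the careful sign/constant bookkeeping around the $h$-terms: matching the $A_2\int h(\Vphi(s))\mu(ds)$ produced by \eqref{asp1-3} against both the $h(\vec x)$ term (via the constraint on $A$) and the delay double-integral term (via the $\partial_t W$ computation), and verifying that the positive-definiteness in Assumption \ref{asp1}(1) is not actually needed here (it is only the algebraic identity that matters). Everything else is routine estimation once the derivatives of $W$ are in hand.
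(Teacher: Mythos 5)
Your overall strategy is the paper's: compute the functional derivatives of the delay factor, run a logarithmic-derivative (equivalently, $U=\ln V_{\Vrho}$ plus It\^o) computation, plug in \eqref{asp1-3}, and absorb the $\Vrho$- and $p_0$-dependent remainders into $\tfrac{\gamma_b}{2}\sum_i(|f_i|+g_i^2)$ using \eqref{cd-p0}, with the constraint on $A$ handling the $h(\vec x)$ terms. However, there is a genuine gap at the single most important step. Your horizontal derivative of
$W(\Vphi)=\exp\{A_2\int_{-r}^0\mu(ds)\int_s^0 e^{\gamma(u-s)}h(\Vphi(u))du\}$ is incomplete: writing the inner integral along the flow as $\int_{t+s}^{t}e^{\gamma(v-t-s)}h(\vec X(v))dv$, differentiation in $t$ produces \emph{three} terms — the new endpoint $e^{-\gamma s}h(\vec X(t))$, the exiting endpoint $-h(\vec X(t+s))$, and $-\gamma$ times the double integral. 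You list only the first and third; the missing boundary term gives, after integrating against $\mu$, the contribution $-A_2\int_{-r}^0 h(\Vphi(s))\mu(ds)$ in $\partial_t W/W$ (compare the paper's expression for $\partial_t U_{\Vrho}$).

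This is not a cosmetic omission: that term is precisely what cancels the problematic $+A_2\int_{-r}^0 h(\Vphi(s))\mu(ds)$ on the right-hand side of \eqref{asp1-3}, and this cancellation is the entire reason the exponential delay factor is built into $V_{\Vrho}$. In your write-up you notice the residual and assert it ``is in fact controlled because one rewrites it using the identity that generated $\partial_t W$,'' but with your version of $\partial_t W$ no such identity is available: $h(\Vphi(s))$ at past times cannot be dominated by $h(\vec x)$, and the double-integral term $-A_2\gamma\int_{-r}^0\mu(ds)\int_s^0 e^{\gamma(u-s)}h(\Vphi(u))du$ must be \emph{kept} in \eqref{est-LV}, not spent on absorbing the residual. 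So as written the bound \eqref{est-LV} does not follow. Once you correct $\partial_t W$ to include the exiting-endpoint term, the cancellation is exact and the remainder of your argument (the $\sum_i\rho_i f_i$ bound from $|\Vrho|<\gamma_b/2$, the diffusion remainder bounded by $2p_0 n\sigma^*\sum_i g_i^2$ plus $\sigma^*|\Vrho|\sum_i g_i^2$ via \eqref{cd-p0}, and $A<A_1-A_2\int_{-r}^0 e^{-\gamma s}\mu(ds)$) goes through exactly as in the paper; also note that, as you suspected, Assumption \ref{asp1}(1) plays no role in this lemma.
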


\begin{proof}
Let
\begin{equation*}
\begin{aligned}
U_{\Vrho}(\Vphi)&=\ln V_{\Vrho}(\Vphi)
\\&=\ln \Big(1+\vec c^\top\vec x\Big)+\sum_{i=1}^n \rho_i\ln x_i+A_2\int_{-r}^0\mu(ds)\int_s^0 e^{\gamma(u-s)}h\big(\Vphi(u)\big)du.
\end{aligned}
\end{equation*}
By \cite[Remark 2.2]{DY19} and direct calculation, we have
\begin{equation*}
\begin{aligned}
\partial_t U_{\Vrho}(\Vphi)=&A_2h(\vec x)\int_{-r}^0 e^{-\gamma s}\mu(ds)\\
&-A_2\int_{-r}^0h\big(\Vphi(s)\big)\mu(ds)-A_2\gamma \int_{-r}^0\mu(ds)\int_s^0e^{\gamma(u-s)}h\big(\Vphi(u)\big)du,
\end{aligned}
\end{equation*}
\begin{equation*}
\begin{aligned}
\partial_i U_{\Vrho}(\Vphi)=\dfrac{c_i}{1+\vec c^\top\vec x}+\dfrac{\rho_i}{x_i}\;;\;\partial_{ij} U_{\Vrho}(\Vphi)=\dfrac{-c_ic_j}{\Big(1+\vec c^\top\vec x\Big)^2}+\dfrac{-\delta_{ij}\rho_i}{x_i^2},
\end{aligned}
\end{equation*}
where
\[
\delta_{ij}=\begin{cases}
1\quad \text{if}\quad i=j,\\
0\quad\text{otherwise.}
\end{cases}
\]
As a consequence, we obtain from the functional It\^o formula
that
\begin{equation}\label{est-LU}
\begin{aligned}
\Lom U_{\Vrho}(\Vphi)
&=A_2 h(\vec x)\int_{-r}^0 e^{-\gamma s}\mu(ds)-A_2\int_{-r}^0h\big(\Vphi(s)\big)\mu(ds)-A_2\gamma \int_{-r}^0\mu(ds)\int_s^0e^{\gamma(u-s)}h\big(\Vphi(u)\big)du
\\&\quad +\dfrac{\sum_{i=1}^n c_ix_if_i(\Vphi)}{1+\vec c^\top\vec x}-\dfrac 12\sum_{i,j=1}^n  \dfrac{c_ic_j\sigma_{ij}x_ix_jg_i(\Vphi)g_j(\Vphi)}{\Big(1+\vec c^\top\vec x\Big)^2} +\sum_{i=1}^n\rho_i\Big(f_i(\Vphi)-\sigma_{ii}g_i^2(\Vphi)\Big).
\end{aligned}
\end{equation}
Therefore, by the fact $V_{\Vrho}^{p_0}(\Vphi)=e^{p_0U_{\Vrho}(\Vphi)}$ and an application of the
functional
It\^o formula, we get
\begin{equation*}
\begin{aligned}
\Lom V_{\Vrho}^{p_0}(\Vphi)=&p_0V_{\Vrho}^{p_0}(\Vphi)\Bigg(A_2 h(\vec x)\int_{-r}^0 e^{-\gamma s}\mu(ds)-A_2\int_{-r}^0h\big(\Vphi(s)\big)\mu(ds)
\\&\;-A_2\gamma \int_{-r}^0\mu(ds)\int_s^0e^{\gamma(u-s)}h\big(\Vphi(u)\big)du
\\&\;+\dfrac{\sum_{i=1}^n c_ix_if_i(\Vphi)}{1+\vec c^\top\vec x}-\dfrac 12\sum_{i,j=1}^n  \dfrac{c_ic_j\sigma_{ij}x_ix_jg_i(\Vphi)g_j(\Vphi)}{\Big(1+\vec c^\top\vec x\Big)^2}
\\&\;+\sum_{i=1}^n\rho_i\Big(f_i(\Vphi)-\sigma_{ii}g_i^2(\Vphi)\Big)
\\&\;+\dfrac 12 p_0\sum_{i,j=1}^n\Big(\dfrac{c_ix_i}{1+\vec c^\top\vec x}+\rho_i\Big)\Big(\dfrac{c_jx_j}{1+\vec c^\top\vec x}+\rho_j\Big)\sigma_{ij}g_i(\Vphi)g_j(\Vphi)\Bigg).
\end{aligned}
\end{equation*}
Since
\begin{align*}
\dfrac 12 p_0\sum_{i,j=1}^n&\Big(\dfrac{c_ix_i}{1+\vec c^\top\vec x}+\rho_i\Big)\Big(\dfrac{c_jx_j}{1+\vec c^\top\vec x}+\rho_j\Big)\sigma_{ij}g_i(\Vphi)g_j(\Vphi)
\\&\leq \dfrac 14p_0\sum_{i,j=1}^n(1+\rho_i)(1+\rho_j)\sigma_{ij}\Big(g_i^2(\Vphi)+g_j^2(\Vphi)\Big)
\\&\leq 2p_0n\sigma^*\sum_{i=1}^n g_i^2(\Vphi),
\end{align*}
and $\abs{\rho_i}<\frac{\gamma_b}2\;;\;\abs{\rho_i}\sigma^*+2p_0n\sigma^*<\frac{\gamma_b}2\;\forall i=1,\dots,n,$ using Assumption \ref{asp1}, we have
\begin{equation*}
\begin{aligned}
\Lom V_{\Vrho}^{p_0}(\Vphi)\leq& p_0V_{\Vrho}^{p_0}(\Vphi)\Bigg(A_0 \1_{\{\abs{\vec x}<M\}}-\gamma_0-h(\vec x)\Big(A_1-A_2\int_{-r}^0 e^{-\gamma s}\mu(ds)\Big)
\\&-A_2 \gamma \int_{-r}^0\mu(ds)\int_s^0e^{\gamma(u-s)}h\big(\Vphi(u)\big)du-\frac{\gamma_b}2\sum_{i=1}^n\Big(\abs{f_i(\Vphi)}+g_i^2(\Vphi)\Big)\Bigg)
\\\leq& p_0V_{\Vrho}^{p_0}(\Vphi)\Bigg(A_0 \1_{\{\abs{\vec x}<M\}}-\gamma_0-Ah(\vec x)
\\&-A_2\gamma \int_{-r}^0\mu(ds)\int_s^0e^{\gamma(u-s)}h\big(\Vphi(u)
\big)du-\dfrac{\gamma_b}2\sum_{i=1}^n\Big(\abs{f_i(\Vphi)}
+g_i^2(\Vphi)\Big)\Bigg).
\end{aligned}
\end{equation*}
The proof is complete.
\end{proof}

\begin{thm}\label{existence}
For any initial condition $\Bphi\in\C_+$, there exists a unique global solution of \eqref{Kol-Eq}. It remains in $\C_+$ $($resp., $\C_+^{\circ})$, provided $\Bphi\in\C_+$ $($resp., $\Bphi\in\C_+^{\circ})$.
Moreover, for any $p_0,\Vrho$ satisfying
 %the
 condition \eqref{cd-p0}, we have
\begin{equation}\label{EV-bounded-ne}
\Ephi V_{\Vrho}^{p_0}(\vec X_t)\leq V_{\Vrho}^{p_0}(\Bphi)e^{A_0p_0t}.
\end{equation}
In addition, if $\rho_i\geq 0,\forall i$, then
\begin{equation}\label{EV-bounded}
\Ephi V_{\Vrho}^{p_0}(\vec X_t)\leq V_{\Vrho}^{p_0}(\Bphi)e^{-\gamma_0p_0t}+\bar M_{p_0,\Vrho},
\end{equation}
where $$\bar M_{p_0,\Vrho}:=\dfrac {A_0}{\gamma_0}\sup_{\Vphi\in \C_{V,M}}V_{\Vrho}^{p_0}(\Vphi)<\infty\text{ provided }\rho_i\geq 0\;\forall i,$$
and
$\C_{V, M}=\{\Vphi\in\C_+: A_2\gamma \int_{-r}^0\mu(ds)\int_s^0e^{\gamma(u-s)}h\big(\Vphi(u)\big)du\leq A_0 \text{ and } |\vec x|\leq M\}$.
\end{thm}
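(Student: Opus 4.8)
The plan is to establish the four conclusions in sequence: local existence and uniqueness, non-explosion (global existence), invariance of $\C_+$ and $\C_+^\circ$, and finally the two moment bounds \eqref{EV-bounded-ne} and \eqref{EV-bounded}. First I would note that Assumption \ref{asp1}(2) gives local Lipschitz continuity of the coefficients $x_if_i(\Vphi)$ and $x_ig_i(\Vphi)$ on bounded subsets of $\C_+$, so the standard theory of stochastic functional differential equations (e.g. Mao's existence--uniqueness theorem for SFDEs) yields a unique local strong solution up to an explosion time $\tau_e$. For positivity, I would exploit the multiplicative structure of \eqref{Kol-Eq}: each $X_i$ satisfies a linear-type equation in its own coordinate with coefficients $f_i(\vec X_t), g_i(\vec X_t)$ frozen along the path, so before $\tau_e$ one can write $X_i(t)=\varphi_i(0)\exp\{\int_0^t(f_i(\vec X_s)-\tfrac12\sigma_{ii}g_i^2(\vec X_s))ds+\int_0^t g_i(\vec X_s)dE_i(s)\}$, which is strictly positive when $\varphi_i(0)>0$ and identically zero when $\varphi_i(0)=0$; this simultaneously shows $\C_+$ and $\C_+^\circ$ are invariant. (A small technical point: the coefficients are only defined on $\C_+$, so one should run a localization/patching argument — solve on $[0,\tau_k]$ where $\tau_k$ is the exit time from a large ball intersected with a neighborhood keeping coordinates bounded away from where coefficients misbehave — but positivity is what rescues this, since the solution never leaves $\C_+$.)

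The heart of the argument, and the step I expect to be the main obstacle, is proving non-explosion together with the moment estimates, and here Lemma \ref{LV} does the decisive work. Applying the functional It\^o formula \eqref{f.Ito} to $V_{\Vrho}^{p_0}(\vec X_t)$ and using the estimate \eqref{est-LV}, I get
\begin{equation*}
\Lom V_{\Vrho}^{p_0}(\vec X_t)\leq p_0 V_{\Vrho}^{p_0}(\vec X_t)\Big[A_0\1_{\{|\vec X(t)|<M\}}-\gamma_0-Ah(\vec X(t))-A_2\gamma\textstyle\int_{-r}^0\mu(ds)\int_s^0 e^{\gamma(u-s)}h(\vec X_{t}(u))du\Big]\leq A_0 p_0 V_{\Vrho}^{p_0}(\vec X_t),
\end{equation*}
where the last inequality drops all the non-positive terms and bounds $A_0\1_{\{|\vec x|<M\}}\le A_0$. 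To make this rigorous I would introduce stopping times $\tau_k=\inf\{t:\|\vec X_t\|\ge k\text{ or }\min_i X_i(t)\le 1/k\}$, apply It\^o to $e^{-A_0 p_0 (t\wedge\tau_k)}V_{\Vrho}^{p_0}(\vec X_{t\wedge\tau_k})$ so that the drift becomes $\le 0$, take expectations to kill the martingale part (the stochastic integrand is bounded on $[0,\tau_k]$), and obtain $\Ephi[e^{-A_0 p_0(t\wedge\tau_k)}V_{\Vrho}^{p_0}(\vec X_{t\wedge\tau_k})]\le V_{\Vrho}^{p_0}(\Bphi)$. Since $V_{\Vrho}^{p_0}\to\infty$ as $\|\Vphi\|\to\infty$ or as a coordinate $\to 0$ when some $\rho_i<0$ — and when all $\rho_i\ge0$ one instead uses a companion function with a strictly negative $\rho$ in each coordinate, or simply notes $V_{\vec 0}$ plus the $\prod x_i^{\rho_i}$ factor still forces $\|\vec X_t\|\to\infty$ on $\{\tau_k<\infty\}$ — a Fatou/Chebyshev argument gives $\PPphi(\tau_k\le T)\to 0$, hence $\tau_e=\infty$ a.s. Letting $k\to\infty$ in the same inequality yields \eqref{EV-bounded-ne}.

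For the refined bound \eqref{EV-bounded} under $\rho_i\ge0$, I would not discard the $-\gamma_0$ and the $h$-terms. Instead, splitting according to whether $\Vphi\in\C_{V,M}$, the bracket in \eqref{est-LV} satisfies: off $\C_{V,M}$ either $|\vec x|>M$ (so $\1_{\{|\vec x|<M\}}=0$ and the bracket is $\le-\gamma_0$) or the delayed integral term exceeds $A_0$ (again bracket $\le -\gamma_0$ since $A_0\1_{\{|\vec x|<M\}}\le A_0$ is cancelled); on $\C_{V,M}$ the bracket is $\le A_0-\gamma_0\le A_0$ and $V_{\Vrho}^{p_0}$ is bounded there by compactness-type reasoning (the defining constraints bound $\|\Vphi\|$ via $h\ge 1$ and bound $|\vec x|$, and $\rho_i\ge0$ makes $\prod x_i^{\rho_i}$ controlled by $\|\Vphi\|$), so $p_0 V_{\Vrho}^{p_0}(\text{bracket})\le -\gamma_0 p_0 V_{\Vrho}^{p_0}+A_0 p_0\sup_{\C_{V,M}}V_{\Vrho}^{p_0}$ holds everywhere. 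This is a Lyapunov drift condition of the form $\Lom W\le -\gamma_0 p_0 W + C$; combining with the functional It\^o formula, localizing as above, taking expectations, and solving the resulting differential inequality $\frac{d}{dt}\Ephi W(\vec X_t)\le -\gamma_0 p_0\Ephi W(\vec X_t)+C$ via Gr\"onwall gives $\Ephi V_{\Vrho}^{p_0}(\vec X_t)\le V_{\Vrho}^{p_0}(\Bphi)e^{-\gamma_0 p_0 t}+C/(\gamma_0 p_0)$ with $C=A_0 p_0\sup_{\C_{V,M}}V_{\Vrho}^{p_0}$, which is exactly \eqref{EV-bounded} with $\bar M_{p_0,\Vrho}=\frac{A_0}{\gamma_0}\sup_{\C_{V,M}}V_{\Vrho}^{p_0}$. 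The finiteness $\bar M_{p_0,\Vrho}<\infty$ is the one genuinely new thing to check and follows from the explicit form of $V_{\Vrho}$ on $\C_{V,M}$: the exponential factor is bounded by $e^{A_0/\gamma}$ by definition of $\C_{V,M}$, $|\vec c^\top\vec x|\le |\vec c||\vec x|\le|\vec c|M$, and $\prod x_i^{\rho_i}\le (|\vec x|\vee 1)^{\sum\rho_i}\le(M\vee1)^{\sum\rho_i}$ since each $\rho_i\ge0$.
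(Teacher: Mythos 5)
Your proposal is correct and follows essentially the same route as the paper: the drift bound \eqref{est-LV} from Lemma \ref{LV} applied to $V_{\Vrho}^{p_0}$, localization by stopping times with Gronwall/Chebyshev (using a companion exponent vector with all strictly negative entries, exactly the paper's $\Vrho^{(1)}$, to control both explosion and the boundary), Fatou to pass to \eqref{EV-bounded-ne}, and the splitting along $\C_{V,M}$ giving the dissipative inequality that yields \eqref{EV-bounded} with the same constant $\bar M_{p_0,\Vrho}$. The only cosmetic difference is that you obtain positivity from the explicit exponential representation of each $X_i$, whereas the paper reads invariance of $\C_+^{\circ}$ off the blow-up of $V_{\Vrho^{(1)}}$ near the boundary; this does not change the argument.
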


\begin{proof}
We carry out
the proof for the existence and uniqueness
of the solution with initial value $\Bphi\in \C^{\circ}_+$. The other cases can be handled similarly.
Let $\Vrho^{(1)}=(\rho_1^{(1)},\dots,\rho_n^{(1)})\in\R^n$ with $\rho_i^{(1)}<0\;\forall i=1,\dots,n$ satisfy the conditions \eqref{cd-p0}. We define the following stopping times $\tau_k^{(1)}=\inf\{t\geq 0: V_{\Vrho^{(1)}}^{p_0}(\vec X_t)\geq k\}$
%$$\tau_k=\inf\Big\{t\geq 0: X_i(t)\vee X_i^{-1}(t)\geq k\;\text{for some}\; i \in \{1,2,...,n\}\Big\},$$
and $\tau_\infty^{(1)}=\lim_{k\to\infty}\tau_k^{(1)}.$ It is easily seen that
\begin{equation}\label{exist-0}
\lim_{m\to\infty}\inf\left\{V_{\Vrho^{(1)}}^{p_0}(\Vphi): x_i\vee x_i^{-1}>m\;\text{for some}\;i\in\{1,\dots,n\}, \vec x:=\Vphi(0),\Vphi\in\C_+^\circ\right\}=\infty.
\end{equation}
The existence and uniqueness of local solutions can be seen in \cite{MAO1} due to the local
Lipschitz continuity of the coefficients. To prove the solution is global and remains in $\C_+^{\circ}$, since \eqref{exist-0}, it is sufficient to prove that $\tau_\infty^{(1)}=\infty$\a.s
We obtain from \eqref{est-LV} that
$$\Lom V_{\Vrho^{(1)}}^{p_0}(\Vphi)\leq A_0p_0V_{\Vrho^{(1)}}^{p_0}(\Vphi),\;\forall \Vphi\in \C_+^\circ.$$
Hence, by the functional It\^o formula, we get
\begin{align*}
\Ephi V_{\Vrho^{(1)}}^{p_0}(\vec X_{t\wedge \tau_k^{(1)}})&=V_{\Vrho^{(1)}}^{p_0}(\Bphi)+\Ephi\int_0^{t\wedge \tau_k^{(1)}}\Lom V_{\Vrho^{(1)}}^{p_0}(X_s)ds
\\&\leq V_{\Vrho^{(1)}}^{p_0}(\Bphi)+p_0A_0\int_0^t \Ephi V_{\Vrho^{(1)}}^{p_0}(\vec X_{s\wedge \tau_k^{(1)}})ds.
\end{align*}
Combined with Gronwall's inequality, this implies that
\begin{equation}\label{exist-1}
\Ephi V_{\Vrho^{(1)}}^{p_0}(\vec X_{t\wedge\tau_k^{(1)}})\leq V_{\Vrho^{(1)}}^{p_0}(\Bphi)e^{p_0A_0t},\;\forall t\geq 0.
\end{equation}
As a consequence,
$$\PPphi\Big\{V_{\Vrho^{(1)}}^{p_0}(\vec X_{t\wedge\tau_k^{(1)}})\geq k\Big\}\leq \dfrac{V_{\Vrho^{(1)}}^{p_0}(\Bphi)e^{p_0A_0t}}{k}\to 0\;\text{as}\;k\to\infty,$$
which forces $\tau_\infty^{(1)}>t\a.s$ for any $t>0$ and hence, $\tau_\infty^{(1)}=\infty\a.s$

Now, we consider the second part.  For any $p_0,\Vrho$ satisfying \eqref{cd-p0}, by applying \eqref{est-LV}, one has
$$
\Lom V_{\Vrho}^{p_0}(\Vphi)\leq A_0p_0 V_{\Vrho}^{p_0}(\Vphi)\;\text{for all }\Vphi\in\C_+^\circ.
$$
Thus, from \eqref{exist-1}, we get
$$\Ephi V_{\Vrho}^{p_0}(\vec X_t)\leq V_{\Vrho}^{p_0}(\Bphi)e^{A_0p_0t}.$$
If $\rho_i\geq 0\;\forall i$, a consequence of \eqref{est-LV} is \begin{equation}\label{exists-1111}
\Lom V_{\Vrho}^{p_0}(\Vphi)\leq  \gamma_0p_0\bar M_{p_0,\Vrho}-\gamma_0p_0V_{\Vrho}^{p_0}(\Vphi).
\end{equation}
In \eqref{exists-1111}, we have used the fact
$$A_0 \1_{\{\abs{\vec x}<M\}}-A_2\gamma \int_{-r}^0\mu(ds)\int_s^0e^{\gamma(u-s)}h\big(\Vphi(u)\big)du\leq 0,\text{ if }\Vphi\notin\C_{V,M}.$$
By a standard argument (see e.g., \cite[Theorem 5.2, p.157]{MAO}), we can obtain \eqref{EV-bounded} from \eqref{exists-1111}.
The proof is complete.
\end{proof}

\begin{lm}\label{X-bounded}
For any $R_1>0$, $T>r$, and $\eps>0$, there exists an $R_2>0$ such that
\begin{equation*}
\PPphi\Big\{\norm{\vec X_t}\leq R_2,\;\forall t\in [r,T]\Big\}>1-\eps,
\end{equation*}
for any initial point $\Bphi$ satisfying $V_{\vec 0}(\Bphi)<R_1$, where $V_{\vec 0}$ is defined as in Lemma {\rm\ref{LV}} corresponding to $\Vrho=\vec 0=(0,\dots,0).$
\end{lm}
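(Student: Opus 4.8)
The plan is to convert the moment bound \eqref{EV-bounded} for $V_{\vec 0}^{p_0}$ into a uniform (in time, over the compact window $[r,T]$) control of $\|\vec X_t\|$, via a maximal inequality applied to the semimartingale $V_{\vec 0}^{p_0}(\vec X_t)$. First I would fix $p_0$ (and $\Vrho=\vec 0$) satisfying \eqref{cd-p0}; note that with $\Vrho=\vec 0$ all $\rho_i\geq 0$, so Theorem \ref{existence} applies and gives $\Ephi V_{\vec 0}^{p_0}(\vec X_t)\leq V_{\vec 0}^{p_0}(\Bphi)e^{-\gamma_0 p_0 t}+\bar M_{p_0,\vec 0}$, hence a bound depending only on $R_1$ (through $V_{\vec 0}(\Bphi)<R_1$) and not on $t$. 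The key point is that a bound on $V_{\vec 0}^{p_0}(\vec X_t)$ forces a bound on $\abs{\vec X(t)}=\abs{\vec X_t(0)}$, because $V_{\vec 0}(\Vphi)=(1+\vec c^\top\vec x)\exp\{\cdots\}\geq 1+\vec c^\top\vec x\geq 1+(\min_i c_i)\abs{\vec x}$ on $\C_+$; so $\abs{\vec X(t)}$ large implies $V_{\vec 0}^{p_0}(\vec X_t)$ large. To upgrade a bound on $\abs{\vec X(t)}$ for fixed $t$ to a bound on $\norm{\vec X_t}=\sup_{s\in[-r,0]}\abs{\vec X(t+s)}$, I use that $\norm{\vec X_t}\leq\sup_{u\in[0,T]}\abs{\vec X(u)}$ for $t\in[r,T]$ (all times $t+s$ with $t\in[r,T]$, $s\in[-r,0]$ lie in $[0,T]$, using that $\Bphi$ has been incorporated and $\vec X(u)=\Bphi(u)$ only for $u\in[-r,0]$, where $\abs{\Bphi(u)}\le\norm{\Bphi}$ is itself controlled by $V_{\vec 0}(\Bphi)<R_1$).

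The main step is therefore a maximal estimate. From the functional It\^o formula \eqref{f.Ito} and \eqref{exists-1111}, $V_{\vec 0}^{p_0}(\vec X_t)=V_{\vec 0}^{p_0}(\Bphi)+\int_0^t\Lom V_{\vec 0}^{p_0}(\vec X_s)\,ds+M_t$ where $\Lom V_{\vec 0}^{p_0}\leq\gamma_0 p_0\bar M_{p_0,\vec 0}$ (the drift has a uniform upper bound) and $M_t=\sum_i\int_0^t X_i(s)g_i(\vec X_s)\partial_i V_{\vec 0}^{p_0}(\vec X_s)\,dE_i(s)$ is a local martingale. Taking the supremum over $t\in[0,T]$, bounding the drift integral by $\gamma_0 p_0\bar M_{p_0,\vec 0}T$, and applying Doob's $L^1$ maximal inequality (or a Burkholder--Davis--Gundy estimate after a localization/stopping-time argument to control $\E\sup_{t\le T}\abs{M_t}$ in terms of $\E\int_0^T\cdots ds$, which is finite by \eqref{EV-bounded} and Assumption \ref{asp-2} controlling $g_i^2$), one gets
\begin{equation*}
\Ephi\sup_{t\in[0,T]}V_{\vec 0}^{p_0}(\vec X_t)\leq C(R_1,T)<\infty,
\end{equation*}
with $C(R_1,T)$ depending only on $R_1$ and $T$. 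Then Markov's inequality gives, for any $K>0$,
\begin{equation*}
\PPphi\Big\{\sup_{t\in[0,T]}V_{\vec 0}^{p_0}(\vec X_t)\geq K\Big\}\leq\frac{C(R_1,T)}{K},
\end{equation*}
and choosing $K=K(\eps)$ with $C(R_1,T)/K\leq\eps$ makes the complementary event have probability $>1-\eps$. On that event $\sup_{u\in[0,T]}\abs{\vec X(u)}$ is bounded by some $R_2=R_2(\eps,R_1,T)$ (solving $1+(\min_i c_i)R_2\leq K^{1/p_0}$), which by the observation above yields $\norm{\vec X_t}\leq R_2$ for all $t\in[r,T]$ on that event.

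The hard part will be the maximal estimate for the stochastic integral $M_t$: a priori one only knows $\E V_{\vec 0}^{p_0}(\vec X_t)$ is finite pointwise in $t$, not that $\E\sup_t$ is, so one must run the argument along the localizing sequence $\tau_k=\inf\{t:V_{\vec 0}^{p_0}(\vec X_t)\geq k\}$, obtain the estimate uniformly in $k$ with constants independent of $k$ (using \eqref{EV-bounded} to bound $\E\int_0^{T\wedge\tau_k}(X_i g_i)^2(\partial_i V_{\vec 0}^{p_0})^2\,ds$ — here Assumption \ref{asp-2}(a) or (b) and the structure of $\partial_i V_{\vec 0}^{p_0}$, which carries a factor $V_{\vec 0}^{p_0}$ times bounded terms, are what make the integrand integrable), and then let $k\to\infty$ by monotone/Fatou. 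The remaining steps — the elementary lower bound $V_{\vec 0}\geq 1+(\min_i c_i)\abs{\vec x}$ and the inclusion of time windows — are routine.
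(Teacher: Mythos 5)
Your proposal is correct in substance and uses the same Lyapunov function $V_{\vec 0}^{p_0}$, but it reaches the uniform-in-time bound by a genuinely different mechanism than the paper. The paper never estimates $\Ephi\sup_{t\le T}V_{\vec 0}^{p_0}(\vec X_t)$: it introduces the exit times $\tau_k^{(2)}=\inf\{t\ge 0: V_{\vec 0}^{p_0}(\vec X_t)\ge k\}$, uses the drift bound $\Lom V_{\vec 0}^{p_0}\le A_0p_0V_{\vec 0}^{p_0}$ together with Gronwall (exactly as in \eqref{exist-1}) to get $\Ephi V_{\vec 0}^{p_0}(\vec X_{T\wedge\tau_k^{(2)}})\le V_{\vec 0}^{p_0}(\Bphi)e^{A_0p_0T}$, and then applies Chebyshev at the stopped time to conclude $\PPphi\{\tau_{k_0}^{(2)}\le T\}<\eps$; this gives the high-probability bound $\sup_{t\in[0,T]}V_{\vec 0}^{p_0}(\vec X_t)\le k_0$ with no control of the martingale part's running maximum at all. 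Your route instead goes through a genuine maximal moment estimate via Burkholder--Davis--Gundy (much as the paper does later, for a different purpose, in Proposition \ref{theorem-2.4}(i)); this buys a stronger conclusion (an $L^1$ bound on the supremum) but at a real cost: the quadratic variation of your $M_t$ involves $V_{\vec 0}^{2p_0}\sum_i g_i^2$, so you must either shrink $p_0$ so that $2p_0$ still satisfies \eqref{cd-p0} or, as the paper does, run the decomposition for $V_{\vec 0}^{p_0/2}$ and square, and then use the $-\frac{\gamma_b}{2}V^{p_0}\sum_i(\abs{f_i}+g_i^2)$ term in \eqref{est-LV} (as in \eqref{sum-g}) to make the BDG bound finite. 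Two small corrections: Doob's $L^1$ maximal inequality does not yield $\Ephi\sup_t\abs{M_t}<\infty$ (that needs BDG or $L^p$, $p>1$), so drop that alternative; and the parenthetical claim that $V_{\vec 0}(\Bphi)<R_1$ controls $\norm{\Bphi}$ is unjustified in general (since $h$ need not dominate $\abs{\vec x}$), but it is also unnecessary, because for $t\in[r,T]$ the segment $\vec X_t$ only involves $\vec X(u)$ with $u\in[0,T]$, which is exactly why the lemma is stated on $[r,T]$.
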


\begin{proof}
As the proof of Theorem \ref{existence}, we define the following stopping times $\tau_k^{(2)}=\inf\{t\geq 0: V_{\vec 0}^{p_0}(\vec X_t)\geq k\}$ and $\tau_\infty^{(2)}=\lim_{k\to\infty}\tau_k^{(2)}.$
Analogous  \eqref{exist-1}, we obtain
$$\Ephi V_{\vec 0}^{p_0}(\vec X_{t\wedge\tau_k^{(2)}})\leq V_{\vec 0}^{p_0}(\Bphi)e^{p_0A_0t},\;\forall t\geq 0.$$
Therefore, for any $R_1$, $T$, $\eps>0$, and initial condition $\Bphi$ satisfying $V_{\vec 0}(\Bphi)<R_1$, there exists
a finite constant $k_0$ such that
$$\dfrac{V_{\vec 0}^{p_0}(\Bphi)e^{p_0A_0T}}{k_0}< \eps,$$
and
%then
\begin{align*}
\PPphi\Big\{V_{\vec 0}^{p_0}(\vec X_{T\wedge \tau_{k_0}^{(2)}})\geq k_0\Big\}\leq \dfrac{V_{\vec 0}^{p_0}(\Bphi)e^{p_0A_0T}}{k_0}< \eps.
\end{align*}
That means $\PPphi\{\tau_{k_0}^{(2)}\geq T\}> 1-\eps$ or
$$\PPphi\big\{V_{\vec 0}^{p_0}(\vec X_t)\leq k_0\;\forall t\in [0,T]\big\}> 1-\eps.$$
Note that $V_{\vec 0}^{p_0}(\vec X_t)\geq 1+\sum_{i=1}^nc_iX_i(t)$ and $c_i>0\;\forall i=1,\dots,n$.
Therefore, it is easily seen that there exists a finite constant $R_2$ satisfying
$$\PPphi\Big\{\norm{\vec X_t}\leq R_2,\;\forall t\in [r,T]\Big\}>1-\eps.$$
\end{proof}

\begin{lm}\label{Holder}
There is a
sufficiently small $\alpha>0$
such that
for any $R>0$ and $\eps>0$, there exists $R_3=R_3(R,\eps)>0$ satisfying
\begin{equation}
\text {if }\norm{\Bphi}\leq R\text{ then }\PPphi\left\{\|\vec X_{t}\|_{2\alpha}\leq R_3\;\forall t\in[r,3r]\right\}\geq1-\frac{\eps}2.
\end{equation}
As a consequence,
for any $R>0$ and $\eps>0$, there exists an $R_4=R_4(\eps,R)>0$  satisfying that
\begin{equation}
\text {if }V_{\vec 0}(\Bphi)\leq R\text{ then }\PPphi\left\{\|\vec X_{t}\|_{2\alpha}\leq R_4\;\forall t\in[2r,3r]\right\}\geq1-\eps.
\end{equation}
\end{lm}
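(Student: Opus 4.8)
The plan is to derive uniform moment bounds on the Hölder seminorm of the solution on a fixed finite time window, then convert these bounds into the probabilistic statements via Markov's inequality and Lemma \ref{X-bounded}. First I would fix $p_0 > 0$ small enough to satisfy \eqref{cd-p0} and consider the coordinate processes $X_i(t)$ on the interval $[0, 3r]$ (or a slightly larger one to absorb the segment structure). Using Lemma \ref{X-bounded} applied with $T = 3r$, for any $R$ there is an event $\Omega_R$ of probability at least $1 - \eps/4$ on which $\norm{\vec X_t} \le R_2$ for all $t \in [r, 3r]$; actually, since we only assume $\norm{\Bphi} \le R$ here (not $V_{\vec 0}(\Bphi) \le R$), I would first note $V_{\vec 0}(\Bphi)$ is bounded by a function of $\norm{\Bphi}$ and $h$, so $\norm{\Bphi}\le R$ yields $V_{\vec 0}(\Bphi) \le R'$ for some $R' = R'(R)$, and then Lemma \ref{X-bounded} gives the sup bound $\norm{\vec X_t}\le R_2$ on $[r,3r]$ off a set of probability $\eps/4$. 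On that good event, the drift $X_i(t) f_i(\vec X_t)$ and diffusion $X_i(t) g_i(\vec X_t)$ coefficients are bounded by a constant depending only on $R_2$ (by the local Lipschitz/continuity in Assumption \ref{asp1}(2), which gives local boundedness), uniformly over $t$ in the window.

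The core estimate is then the classical Kolmogorov–Chentsov argument. On the good event, writing $X_i(t) - X_i(s) = \int_s^t X_i(u) f_i(\vec X_u)\,du + \int_s^t X_i(u) g_i(\vec X_u)\,dE_i(u)$ for $r \le s < t \le 3r$, the drift term contributes $O(|t-s|)$ and the martingale term, via the Burkholder–Davis–Gundy inequality, satisfies $\E\big[ |{\int_s^t X_i(u) g_i(\vec X_u)\,dE_i(u)}|^{2m}\big] \le C_m |t-s|^m$ for every integer $m$, with $C_m$ depending on $R_2$ and $m$. Choosing $m$ large, the Kolmogorov continuity theorem yields that $\vec X$ has a version whose $\beta$-Hölder seminorm on $[r, 3r]$, for any $\beta < 1/2 - 1/(2m)$, has finite $2m$-th moment bounded by a constant $C = C(R_2, m, r)$. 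Picking $m$ so that $1/2 - 1/(2m) > 2\alpha$ for a sufficiently small fixed $\alpha > 0$ — this is the choice of $\alpha$ claimed in the statement — gives $\E\big[ \1_{\Omega_R}\|\vec X\|_{C^{2\alpha}([r,3r])}^{2m}\big] \le C$. A technical point to handle carefully is that the estimate must be localized to the good event $\Omega_R$ before applying BDG, since without the sup bound the coefficients are not globally bounded; one does this by stopping at the exit time of $\{\norm{\vec X_t}\le R_2\}$ and noting the stopped process agrees with $\vec X$ on $\Omega_R$.

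With the moment bound in hand, Markov's inequality gives: there is $R_3 = R_3(R,\eps)$ with $\PPphi\{ \|\vec X\|_{C^{2\alpha}([r,3r])} > R_3\} \le \PPphi(\Omega_R^c) + C/R_3^{2m} \le \eps/4 + \eps/4 = \eps/2$, and since $\|\vec X_t\|_{2\alpha}$ for $t \in [r, 3r]$ is controlled by the $C^{2\alpha}$-norm of $\vec X$ on $[0,3r]$ (the segment $\vec X_t$ lives on $[t-r, t] \subseteq [0, 3r]$), we need the Hölder estimate on $[0,3r]$ rather than $[r,3r]$. This forces a small bookkeeping adjustment: run the same argument starting from time $0$ on the window $[0, 3r]$ using the a priori sup bound on $[0,3r]$ (available from the proof of Theorem \ref{existence} / Lemma \ref{X-bounded} with $T = 3r$, which actually controls $[0,T]$ up to the initial-data term $\norm{\Bphi} \le R$), giving the seminorm bound on all of $[0,3r]$ and hence for each segment $\vec X_t$, $t \in [r,3r]$. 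This proves the first assertion. For the second, the hypothesis $V_{\vec 0}(\Bphi) \le R$ together with $V_{\vec 0}^{p_0}(\vec X_t) \ge 1 + \sum_i c_i X_i(t)$ and \eqref{EV-bounded-ne} lets us first run the process from $t=0$ to $t = r$ and conclude, off a set of probability $\eps/2$, that $\norm{\vec X_r}$ — indeed $V_{\vec 0}(\vec X_r)$ — is bounded by some $R = R(\eps, R)$; then apply the first part with the Markov property at time $r$ and a time shift by $r$, so that $[r,3r]$ becomes $[2r,3r]$ relative to the new start, and combine the two exceptional sets to total $\eps$.

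The main obstacle I anticipate is the localization in the second paragraph: the functional coefficients $f_i, g_i$ are only locally Lipschitz and a priori unbounded, so the BDG/Kolmogorov machinery cannot be applied directly to $\vec X$ but only to a version stopped at the first exit from a large ball, and one must argue that on the high-probability event where no exit occurs the two processes coincide and the seminorm bounds transfer. Keeping track of which time window (whether $[0,3r]$, $[r,3r]$, or $[2r,3r]$) each bound lives on, and ensuring the segment processes $\vec X_t$ are always covered, is the other place where care is needed, though it is routine once the structure is set up.
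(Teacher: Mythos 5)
Your proposal is correct and follows essentially the same route as the paper: a high-probability sup bound from Lemma \ref{X-bounded} (valid on $[0,3r]$), localization so that BDG applies, increment moment estimates plus Kolmogorov--Chentsov to control the H\"older seminorm on $[0,3r]$, Markov's inequality, and then the second assertion via Lemma \ref{X-bounded} and the Markov property at time $r$. The only differences are cosmetic: the paper localizes by truncating $f_i,g_i$ to globally Lipschitz bounded functions and uses fourth moments, whereas you stop at the exit time of a ball and use $2m$-th moments, which is an equivalent implementation.
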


\begin{proof}
For any $R$ and $\eps>0$,
by slightly modifying the proof of Lemma \ref{X-bounded}, there exists an $\tilde R>0$ depending only on $R$ such that
\begin{equation}\label{h-e1}
\PPphi\{\norm{\vec X_{t}}\leq \tilde R,\text{ for all }t\in[0,3r]\}\geq 1-\frac{\eps}4\text{ if }\norm{\Bphi}\leq R.
\end{equation}
Denote by $f_i^{\tilde R}(\cdot)$ and $g_i^{\tilde R}(\cdot)$ the truncated functions, where
\[f_i^{\tilde R}(\Vphi)=\begin{cases}
f_i(\Vphi)\text{ if } \norm{\Vphi}<\tilde R,\\
f_i\left(\dfrac{R_1\Vphi}{\norm{\Vphi}}\right)\text{ otherwise,}
\end{cases}
\]
and $g_i^{\tilde R}(\cdot)$ is defined similarly.
Then $f_i^{\tilde R}(\cdot)$ and $g_i^{\tilde R}(\cdot)$ are globally Lipschitz and bounded.  Let $\wdt {\vec X}(t)=\big(\wdt X_1(t),\dots,\wdt X_n(t)\big)$ be the solution of \eqref{Kol-Eq} when we replace $f_i(\cdot)$ and $g_i(\cdot)$ by $f_i^{\tilde R}(\cdot)$ and $g_i^{\tilde R}(\cdot)$, respectively.
%Therefore,
By a standard argument, it is easy to obtain that
$$\Ephi \abs{\wdt{X}_i(t)}^4\leq \tilde K\;\forall 0\leq t\leq 3r, \|\Bphi\|\leq R$$
where $\tilde K$ is
%some
a constant depending only on $R$ and $\tilde R$. On the other hand,
by Burkholder's inequality we have that $\forall 0\leq s\leq t\leq 3r, \|\Bphi\|\leq R,$
$$\Ephi\abs{\wdt X_i(t)-\wdt X_i(s)}^4\leq C_{1} \Ephi\left |\int_s^t \wdt X_i(y)dy\right|^4+C_{1}\Ephi\left(\int_s^t |\wdt X_i(y)|^2dy\right)^2,$$
where $C_{1}$ depends only on $T$, $R$, and $\tilde R$.
Hence, by H\"older's inequality, we obtain for $0\leq s\leq t\leq 3r, \|\Bphi\|\leq R$ that
\begin{equation*}
\begin{aligned}
\Ephi\abs{\wdt X_i(t)-\wdt X_i(s)}^4\leq 2C_{1}(t-s)^2\Ephi\int_0^s\left|\wdt X_i(y)\right|^4dy
\leq C_2(t-s)^2,
\end{aligned}
\end{equation*}
where $C_2$ is
a constant depending only on $R$ and $\tilde R$. As a consequence of the Kolmogorov-Chentsov theorem, $\{\wdt {\vec X}(t):0\leq t\leq 3r\}$ has H\"older-continuous sample paths with % some
 an exponent $2\alpha\in(0,\frac12)$. Moreover, there is a $R_3=R_3( R, \eps)$ satisfying
\begin{equation*}%\label{h-e3}
\PPphi\left\{\sup_{0\leq t\leq 3r}|\wdt {\vec X}(t)|+\sup_{0\leq s\leq t\leq 3r}\frac{|\wdt {\vec X}(t)-\wdt {\vec X}(s)|}{(t-s)^{2\alpha}}\leq R_3\right\}\geq 1-\frac\eps4, \|\Bphi\|\leq R,
\end{equation*}
which implies
\begin{equation}\label{h-e3}
\PPphi\left\{\|\wdt {\vec X}_t\|_{2\alpha}\leq R_3\;\forall t\in [r,3r] \right\}\geq 1-\frac\eps4, \|\Bphi\|\leq R.
\end{equation}
Combining \eqref{h-e1} and \eqref{h-e3} implies that
$$\PPphi\left\{\| \vec X_t\|_{2\alpha}\leq R_3\;\forall t\in[r,3r]\right\}\geq 1-\frac{\eps}2,\text{ provided }\norm{\Bphi}<R,$$
and the first part of the proposition is proved.

Now, we consider the second part. By Lemma \ref{X-bounded}, there is an $R_5=R_5(\eps,R)$ such that
\begin{equation}\label{h-e4}
\PPphi\{\norm{\vec X_t}<R_5\;\forall t\in[r,3r]\}\geq 1-\frac \eps 2\text{ if }V_{\vec 0}(\Bphi)<R.
\end{equation}
Hence, the second conclusion follows from the first part, \eqref{h-e4}, and the Markov property of $(\vec X_t)$.
\end{proof}

\begin{prop}\label{theorem-2.4}
The following results hold.
\begin{itemize}
\item [\rm(i)]
Let $\rho_1^{(3)}$ be a fixed constant satisfying $0<\rho_1^{(3)}<\min\left\{\frac{\gamma_b}2, \frac1n, \frac{\gamma_b}{4\sigma^*}\right\}$.
For any $T>r$ and $m>0$ there exists
a finite constant $K_{m,T}$ such that
$$\Ephi\norm{ X_{i,t}}^{p_0\rho_1^{(3)}}\leq K_{m,T}\phi_i^{p_0\rho_1^{(3)}}(0),\;\forall t\in [r,T], i=1,\dots,n,$$
given that
$$\abs{\Bphi(0)}+\int_{-r}^0\mu(ds)\int_s^0e^{\gamma(u-s)}h\big(\Bphi(u)\big)du<m,$$
where $\vec X_t=:(X_{1,t},\dots,X_{n,t})$ and $\Bphi=:(\phi_1,\dots,\phi_n)$ is the initial value.
\item[\rm(ii)] For any $T>r$, $\eps>0$, $R>0$, there exists an $\eps_1>0$ such that
\begin{equation}\label{cont-ini}
\PP\left\{\big\|{\vec X^{\Bphi_1}_T-\vec X^{\Bphi_2}_T}\big\|\leq \eps\right\}\geq 1-\eps\text{ whenever }V_{\vec 0}(\Bphi_i)<R, \norm{\Bphi_1-\Bphi_2}\leq \eps_1.
\end{equation}
Moreover, the solution $(\vec X_t)$ has the Feller property in $\C_+$.
\end{itemize}
\end{prop}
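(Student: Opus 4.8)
The plan is to extract the bound from the Lyapunov function of Lemma~\ref{LV} with weight vector concentrated on the $i$-th coordinate. Fix $i$ and set $\Vrho:=\rho_1^{(3)}e_i$, which satisfies \eqref{cd-p0} by the choice of $\rho_1^{(3)}$. Since $\vec c>0$, $h\geq 1$ and $A_2>0$, every factor of $V_{\Vrho}$ other than $x_i^{\rho_1^{(3)}}$ is $\geq 1$, so $V_{\Vrho}^{p_0}(\Vphi)\geq x_i^{p_0\rho_1^{(3)}}$ (with $\vec x=\Vphi(0)$); since the solution stays in $\C_+$, it follows that, for $t\in[r,T]$,
\[
\norm{X_{i,t}}^{p_0\rho_1^{(3)}}=\sup_{u\in[t-r,t]}X_i(u)^{p_0\rho_1^{(3)}}\leq \sup_{u\in[0,T]}V_{\Vrho}^{p_0}(\vec X_u).
\]
Moreover, the hypothesis $\abs{\Bphi(0)}+\int_{-r}^0\mu(ds)\int_s^0 e^{\gamma(u-s)}h(\Bphi(u))\,du<m$ bounds both $(1+\vec c^\top\Bphi(0))^{p_0}$ and $\exp\{p_0A_2\int_{-r}^0\mu(ds)\int_s^0 e^{\gamma(u-s)}h(\Bphi(u))\,du\}$ by constants depending only on $m$, whence $V_{\Vrho}^{p_0}(\Bphi)\leq C_m\,\phi_i^{p_0\rho_1^{(3)}}(0)$. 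It thus suffices to prove the segment-supremum estimate $\Ephi\sup_{u\in[0,T]}V_{\Vrho}^{p_0}(\vec X_u)\leq C_T\,V_{\Vrho}^{p_0}(\Bphi)$, which upgrades the pointwise bound \eqref{EV-bounded-ne}.

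\textbf{Part (i): the supremum estimate.} Apply the functional It\^o formula \eqref{f.Ito} to $V_{\Vrho}^{p_0}$; using $\Lom V_{\Vrho}^{p_0}\leq A_0p_0V_{\Vrho}^{p_0}$ (a consequence of \eqref{est-LV}) for the drift, one obtains, after the usual localization by stopping times (suppressed here),
\[
\sup_{u\le T}V_{\Vrho}^{p_0}(\vec X_u)\leq V_{\Vrho}^{p_0}(\Bphi)+A_0p_0\int_0^T V_{\Vrho}^{p_0}(\vec X_u)\,du+\sup_{u\leq T}\abs{M_u},\qquad M_u:=\sum_i\int_0^u X_ig_i\,\partial_iV_{\Vrho}^{p_0}\,dE_i.
\]
Because $\abs{X_i\partial_iV_{\Vrho}^{p_0}}=p_0V_{\Vrho}^{p_0}\abs{\frac{c_ix_i}{1+\vec c^\top\vec x}+\rho_i}\leq 2p_0V_{\Vrho}^{p_0}$, we get $\langle M\rangle_T\leq C\int_0^T(V_{\Vrho}^{p_0}(\vec X_u))^2\sum_ig_i^2(\vec X_u)\,du\leq C\big(\sup_{u\le T}V_{\Vrho}^{p_0}(\vec X_u)\big)\int_0^T V_{\Vrho}^{p_0}(\vec X_u)\sum_ig_i^2(\vec X_u)\,du$, and the last integral is controlled by the negative term $-\frac{\gamma_b}2\sum_i(\abs{f_i}+g_i^2)$ in \eqref{est-LV}, which gives $V_{\Vrho}^{p_0}\sum_ig_i^2\leq C\big(V_{\Vrho}^{p_0}-\frac1{p_0}\Lom V_{\Vrho}^{p_0}\big)$ and hence, by Dynkin's formula and \eqref{EV-bounded-ne}, $\Ephi\int_0^T V_{\Vrho}^{p_0}(\vec X_u)\sum_ig_i^2(\vec X_u)\,du\leq C_T V_{\Vrho}^{p_0}(\Bphi)$. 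Taking expectations in the display and applying the Burkholder--Davis--Gundy and Cauchy--Schwarz inequalities, the quantity $S:=\Ephi\sup_{u\le T}V_{\Vrho}^{p_0}(\vec X_u)$ satisfies $S\leq C_TV_{\Vrho}^{p_0}(\Bphi)+C_T\big(S\,V_{\Vrho}^{p_0}(\Bphi)\big)^{1/2}$; since $S<\infty$ (by the localization), this self-improving inequality forces $S\leq C_T' V_{\Vrho}^{p_0}(\Bphi)$. Together with the reduction step, this yields $\Ephi\norm{X_{i,t}}^{p_0\rho_1^{(3)}}\leq K_{m,T}\phi_i^{p_0\rho_1^{(3)}}(0)$ for $t\in[r,T]$ (the case $\phi_i(0)=0$ being trivial, since then $X_{i,t}\equiv 0$ for $t\geq r$).

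\textbf{Part (ii).} First localize: by \eqref{EV-bounded-ne} and $V_{\vec 0}^{p_0}(\vec X_t)\geq 1+\sum_ic_iX_i(t)$, exactly as in \eqref{h-e1}, for any $\eps,R,T$ there is $\tilde R=\tilde R(\eps,R,T)$ such that $\PPphi\{\norm{\vec X_t}\leq\tilde R\ \forall t\in[0,T]\}\geq 1-\frac\eps4$ whenever $V_{\vec 0}(\Bphi)<R$. Consider $\vec X^{\Bphi_1}$ and $\vec X^{\Bphi_2}$ driven by the common Brownian motion, and let $G$ be the event on which both segment processes remain in $\{\norm{\cdot}\leq\tilde R\}$ throughout $[0,T]$, so $\PP(G)\geq 1-\frac\eps2$. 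Replacing $f_i,g_i$ by the globally Lipschitz, bounded truncations $f_i^{\tilde R},g_i^{\tilde R}$ of Lemma~\ref{Holder}, the corresponding solutions $\wdt{\vec X}^{\Bphi_j}$ of \eqref{Kol-Eq} coincide with $\vec X^{\Bphi_j}$ on $[0,T]$ on the event $G$, by pathwise uniqueness. For the truncated (globally Lipschitz) system, the standard Gronwall--Burkholder estimate for stochastic functional differential equations (see \cite{MAO1}) gives $\E\sup_{t\in[0,T]}\abs{\wdt{\vec X}^{\Bphi_1}(t)-\wdt{\vec X}^{\Bphi_2}(t)}^2\leq C_T\norm{\Bphi_1-\Bphi_2}^2$, hence $\E\norm{\wdt{\vec X}^{\Bphi_1}_T-\wdt{\vec X}^{\Bphi_2}_T}^2\leq C_T\norm{\Bphi_1-\Bphi_2}^2$ since $T>r$. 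Choosing $\eps_1$ with $C_T\eps_1^2/\eps^2<\frac\eps2$ and using Chebyshev's inequality, $\PP\{\norm{\wdt{\vec X}^{\Bphi_1}_T-\wdt{\vec X}^{\Bphi_2}_T}>\eps\}<\frac\eps2$ whenever $\norm{\Bphi_1-\Bphi_2}\leq\eps_1$; on the intersection of this event with $G$, which has probability $\geq 1-\eps$, we get $\norm{\vec X^{\Bphi_1}_T-\vec X^{\Bphi_2}_T}\leq\eps$, which is \eqref{cont-ini}.

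\textbf{Feller property and main obstacle.} If $\Bphi_k\to\Bphi$ in $\C_+$ then, $V_{\vec 0}$ being continuous, all $\Bphi_k$ lie in a sublevel set $\{V_{\vec 0}<R\}$, so \eqref{cont-ini} shows $\norm{\vec X^{\Bphi_k}_T-\vec X^{\Bphi}_T}\to 0$ in probability for each $T>r$ (for $0\le T\le r$ the same conclusion follows by running the Gronwall estimate directly on $[0,T]$, the frozen part of the segment converging uniformly). Hence $\Phi(\vec X^{\Bphi_k}_T)\to\Phi(\vec X^{\Bphi}_T)$ in probability for every $\Phi\in C_b(\C_+)$, and dominated convergence gives $\Ephi[\Phi(\vec X^{\Bphi_k}_T)]\to\Ephi[\Phi(\vec X^{\Bphi}_T)]$; thus the transition semigroup of $(\vec X_t)$ preserves $C_b(\C_+)$, i.e., the Feller property holds. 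The main obstacle is the supremum-over-the-segment estimate in (i): the pointwise moment bound is an immediate consequence of Theorem~\ref{existence}, but the martingale maximal inequality forces control of $(V_{\Vrho}^{p_0})^2\sum_ig_i^2$, which is unavailable from the crude drift bound $\Lom V_{\Vrho}^{p_0}\leq A_0p_0V_{\Vrho}^{p_0}$ and genuinely requires the negative $-\frac{\gamma_b}2\sum_i(\abs{f_i}+g_i^2)$ term of Lemma~\ref{LV}; in (ii) the only delicate point is that, the coefficients being merely locally Lipschitz on a non-compact state space, the Gronwall estimate requires truncation, after which the truncated and original processes must be identified on the high-probability event supplied by \eqref{EV-bounded-ne}.
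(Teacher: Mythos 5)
Your proposal is correct and takes essentially the same route as the paper: for (i) you reduce to the segment-supremum estimate for $V^{p_0}_{\Vrho}$ with $\Vrho$ concentrated on a single coordinate, control the stochastic integral by Burkholder--Davis--Gundy together with the bound $\Ephi\int_0^T V^{p_0}_{\Vrho}(\vec X_s)\sum_i g_i^2(\vec X_s)\,ds\leq C_T V^{p_0}_{\Vrho}(\Bphi)$ coming from the negative term in \eqref{est-LV}, and then close the estimate exactly as the paper does (the paper applies the functional It\^o formula to $V^{p_0/2}_{\Vrho}$ and uses second-moment BDG to get a linear Gronwall inequality, while you work with $V^{p_0}_{\Vrho}$, first-moment BDG and Cauchy--Schwarz to get a self-improving quadratic bound --- a cosmetic difference). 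Part (ii) simply fleshes out, correctly, the truncation/Gronwall and Feller arguments that the paper disposes of by reference to Lemma \ref{Holder} and to Mao's book.
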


\begin{proof}
Let $\Vrho^{(3)}=(\rho_1^{(3)},0,\dots,0)$. Then $\Vrho^{(3)}$ satisfies
%the
condition \eqref{cd-p0}. By the functional It\^o formula, we obtain
\begin{equation}\label{2.3-2}
\begin{aligned}
V^{\frac{p_0}2}_{\Vrho^{(3)}}(\vec X_t)=&V^{\frac{p_0}2}_{\Vrho^{(3)}}(\Bphi)+\int_0^t \Lom V^{\frac{p_0}2}_{\Vrho^{(3)}}(\vec X_s)ds\\
&+\int_0^t\dfrac {p_0}2  V^{\frac{p_0}2}_{\Vrho^{(3)}}(\vec X_s)\sum_{i=1}^n\left(\dfrac{c_iX_i(s)}{1+\sum_{i'=1}^n c_{i'}X_{i'}(s)}+\delta_{1i}\rho_1^{(3)}\right)g_i(\vec X_s)dE_i(s),
\end{aligned}
\end{equation}
where $\delta_{1i}=1$ if $i=1$ and otherwise, $\delta_{1i}=0$.
Therefore, combining with \eqref{est-LV} leads to that
\begin{equation}\label{7-17-1}
\begin{aligned}
V^{\frac{p_0}2}_{\Vrho^{(3)}}(\vec X_t)\leq& V^{\frac{p_0}2}_{\Vrho^{(3)}}(\Bphi)+A_0p_0\int_0^tV^{\frac{p_0}2}_{\Vrho^{(3)}}(\vec X_s)ds\\
&+\int_0^t\dfrac {p_0}2  V^{\frac{p_0}2}_{\Vrho^{(3)}}(\vec X_s)\sum_{i=1}^n\left(\dfrac{c_iX_i(s)}{1+\sum_{i'=1}^n c_{i'}X_{i'}(s)}+\delta_{1i}\rho_1^{(3)}\right)g_i(\vec X_s)dE_i(s).
\end{aligned}
\end{equation}
In the estimates to follow, in fact we need the terms
in \eqref{supV} to
be finite, which can be done by first using estimates for the solution at stopping time $t\wedge\tau_k$
with $\tau_k$
being the first time  such that $|g(\vec X_s)|\vee V_{\Vrho^{(3)}}(\vec X_s)>k$, and
letting $k \to \infty$.
Since it is a standard argument, we omit it
for brevity.
We obtain from \eqref{7-17-1} that
\begin{equation}\label{supV}
\begin{aligned}
&\Ephi\sup_{t\in [0,T]} \Big[V^{\frac{p_0}2}_{\Vrho^{(3)}}(\vec X_t)\Big]^2\leq C^{(1)}V^{p_0}_{\Vrho^{(3)}}(\Bphi)+C^{(1)}\Ephi \int_0^T \sup_{s'\in [0,s]}\Big[V^{\frac{p_0}2}_{\Vrho^{(3)}}(\vec X_{s'})\Big]^2ds
\\& +C^{(1)}\Ephi\sup_{t\in[0,T]}\abs{\int_0^t V^{\frac{p_0}2}_{\Vrho^{(3)}}(\vec X_s)\sum_{i=1}^n\left(\dfrac{c_iX_i(s)}{1+\sum_{i'=1}^n c_{i'}X_{i'}(s)}+\delta_{1i}\rho_1^{(3)}\right)g_i(\vec X_s)dE_i(s)}^2,
\end{aligned}
\end{equation}
where $C^{(1)}$ is a constant, independent of $\Bphi$. The Burkholder-Davis-Gundy inequality and the H\"older inequality imply that
\begin{equation}\label{BDV-1}
\begin{aligned}
\Ephi\sup_{t\in[0,T]}&\abs{\int_0^t V^{\frac{p_0}2}_{\Vrho^{(3)}}(\vec X_s)\sum_{i=1}^n\left(\dfrac{c_iX_i(s)}{1+\sum_{i'=1}^n c_{i'}X_{i'}(s)}+\delta_{1i}\rho_1^{(3)}\right)g_i(\vec X_s)dE_i(s)}^2
\\&
\leq
16n\sigma^*\Ephi\int_0^T V^{p_0}_{\Vrho^{(3)}}(\vec X_s)\sum_{i=1}^ng_i^2(\vec X_s)ds,
\end{aligned}
\end{equation}
for a constant $C_p^{(2)}$, independent of $\Bphi$.
In the display above, we have used
$$
\begin{aligned}
\sum_{i,j=1}^n&\left(\dfrac{c_iX_i(s)}{1+\sum_{i'=1}^n c_{i'}X_{i'}(s)}+1\right)\left(\dfrac{c_jX_j(s)}{1+\sum_{i'=1}^n c_{i'}X_{i'}(s)}+1\right)\sigma_{ij}g_i(\vec X_s)g_j(\vec X_s)\\
&\leq 4n\sigma^*\sum_{i=1}^n g_i^2(\vec X_s).
\end{aligned}
$$
It follows from \eqref{supV} and \eqref{BDV-1} that
\begin{equation}\label{supV-1}
\begin{aligned}
\Ephi\sup_{t\in [0,T]}V^{p_0}_{\Vrho^{(3)}}(\vec X_t)\leq& C^{(1)} V^{p_0}_{\Vrho^{(3)}}(\Bphi)+C^{(1)}\Ephi \int_0^T \sup_{s'\in [0,s]}V^{p_0}_{\Vrho^{(3)}}(\vec X_{s'})ds
\\&+16n\sigma^*C^{(1)}\Ephi\int_0^T V^{p_0}_{\Vrho^{(3)}}(\vec X_s)\sum_{i=1}^ng_i^2(\vec X_s)ds.
\end{aligned}
\end{equation}
On the other hand, by \eqref{EV-bounded-ne},
we get
$$\Ephi V_{\Vrho^{(3)}}^{p_0}(\vec X_{t})\leq V_{\Vrho^{(3)}}^{p_0}(\Bphi)e^{p_0A_0t},\;\forall t\geq 0.$$
Therefore, we obtain from the functional It\^o formula and \eqref{est-LV} that
\begin{equation*}
\begin{aligned}
0&\leq \Ephi V^{p_0}_{\Vrho^{(3)}}(\vec X_T)=V^{p_0}_{\Vrho^{(3)}}(\Bphi)+\Ephi \int_0^T  \Lom V^{p_0}_{\Vrho^{(3)}}(\vec X_s)ds
\\&\leq V^{p_0}_{\Vrho^{(3)}}(\Bphi)+\Ephi\int_0^T \left(p_0A_0V_{\Vrho}^{p_0}(\vec X_s)-\dfrac{\gamma_b}2V_{\Vrho}^{p_0}(\vec X_s)\sum_{i=1}^ng_i^2(\vec X_s)\right)ds
\\&\leq K_{T}^{(1)} V^{p_0}_{\Vrho^{(3)}}(\Bphi)-\dfrac{\gamma_b}2\Ephi\int_0^T  V^{p_0}_{\Vrho^{(3)}}(\vec X_s)\sum_{i=1}^ng_i^2(\vec X_s)ds,
\end{aligned}
\end{equation*}
where $K_{T}^{(1)}$ is a finite constant depending only on $T$.
It follows that
\begin{equation}\label{sum-g}
\Ephi\int_0^T  V^{p_0}_{\Vrho^{(3)}}(\vec X_s)\sum_{i=1}^ng_i^2(\vec X_s)ds\leq K_{T}^{(2)} V^{p_0}_{\Vrho^{(3)}}(\Bphi),\;\text{for some constant}\;K_{T}^{(2)}.
\end{equation}
Combining \eqref{supV-1} and \eqref{sum-g} yields that
\begin{equation}\label{supV-2}
\Ephi\sup_{t\in [0,T]}V^{p_0}_{\Vrho^{(3)}}(\vec X_t)\leq K_{T}^{(3)} V^{p_0}_{\Vrho^{(3)}}(\Bphi)+K_T^{(3)}\Ephi \int_0^T \sup_{s'\in[0,s]}V^{p_0}_{\Vrho^{(3)}}(\vec X_{s'})ds,
\end{equation}
for some constant $K_T^{(3)}$ independent of $\Bphi$.
It is clear that there exists $K_{m,T}^{(4)}$ such that
\begin{equation}\label{supV-1111}
V^{p_0}_{\Vrho^{(3)}}(\Bphi)\leq K_{m,T}^{(4)}\phi_1^{p_0\rho_1^{(3)}}(0),
\end{equation}
given that
$$\abs{\Bphi(0)}+\int_{-r}^0\mu(ds)\int_s^0e^{\gamma(u-s)}h\big(\Bphi(u)\big)du<m.$$
Combining \eqref{supV-2}, \eqref{supV-1111}, and Gronwall's inequality, we have that
\begin{equation}\label{supV-3}
\Ephi\sup_{t\in [0,T]}V^{p_0}_{\Vrho^{(3)}}(\vec X_t)\leq K_{m,T}^{(5)}\phi_1^{p_0\rho_1^{(3)}}(0),
\end{equation}
where $K_{m,T}^{(5)}$ is a finite constant independent of $\Bphi$.
Note that
\begin{equation}\label{supV-4}
V^{p_0}_{\Vrho^{(3)}}(\vec X_t)\geq X^{p_0\rho_1^{(3)}}_{1}(t).
\end{equation}
It follows from \eqref{supV-3} and \eqref{supV-4} that
$$\Ephi\norm{X_{1t}}^{p_0\rho_1^{(3)}}\leq K_{m,T}^{(5)}\phi_1^{p_0\rho_1^{(3)}}(0),\;\forall t\in [r,T].$$
Hence, by a similar argument, we obtain
$$\Ephi\norm{X_{it}}^{p_0\rho_1^{(3)}}\leq K_{m,T}\phi_i^{p_0\rho_1^{(3)}}(0),\;\forall t\in [r,T], i=1,\dots,n,$$
for some constant $K_{m,T}$ depending only on $m,T$. As a result, the first part of the Theorem is proved.

Finally, since our coefficients are Lipschitz continuous in each bounded set of $\C_+$, by using \eqref{EV-bounded-ne} and the truncation argument, the second conclusion is easily obtained (it is similar to the proof of Lemma \ref{Holder}). In addition, the Feller property can be obtained by slightly modifying the proof in  \cite[Lemma 2.9.4 and Theorem 2.9.3]{MAO}.
\end{proof}

\subsection{Tightness, weak convergence of occupation measures, and uniform integrability}
Let $\Vrho=\vec 0$. We obtain from \eqref{est-LV} that for all $\Vphi\in \C_+$, $\vec x:=\Vphi(0)$,
\begin{equation*}
\Lom V_{\vec 0}^{p_0}(\Vphi)\leq \gamma_0p_0\bar M_{p_0,\vec 0}-Ap_0 V_{\vec 0}^{p_0}(\Vphi)h(\vec x),
\end{equation*}
where $\bar M_{p_0,\vec 0}$ is defined as in Theorem \ref{existence}.
Hence, by the functional It\^o formula, we have
\begin{equation*}
\begin{aligned}
\Ephi V_{\vec 0}^{p_0}(\vec X_t)\leq V_{\vec 0}^{p_0}(\Bphi)+\gamma_0p_0\bar M_{p_0,\vec 0}t-\Ephi\int_0^t Ap_0 V_{\vec 0}^{p_0}(\vec X_s)h(\vec X(s))ds.
\end{aligned}
\end{equation*}
Since $V_{\vec 0}(\Vphi)\geq 1+\vec c^\top\vec x$,
we get
\begin{equation}\label{t-eq6}
\int_0^T \Ephi\left(1+\sum_{i=1}^nc_iX_i(t)\right)^{p_0}h(\vec X(t))dt\leq \dfrac 1{Ap_0}\left(V_{\vec 0}^{p_0}(\Bphi)+T\gamma_0p_0\bar M_{p_0,\vec 0}\right),\;\forall T\geq 0.
\end{equation}
A consequence of \eqref{t-eq6} is that
there is a constant $H_1$ such that
\begin{equation}\label{t-eq7}
\begin{aligned}
\int_r^T \Ephi\bigg(&\Big(1+\sum_{i=1}^nc_iX_i(t)\Big)^{p_0}h(\vec X(t))\\
&+\int_{-r}^0 \Big(1+\sum_{i=1}^n c_i X_i(t+s)\Big)^{p_0}h(\vec X(t+s))\mu(ds)\bigg)dt\\
\leq& H_1\left(T+V_{\vec 0}^{p_0}(\Bphi)\right),\;\forall T\geq r.
\end{aligned}
\end{equation}

On the other hand, using \eqref{est-LV} again, we have
\begin{equation}
\Lom V_{\vec 0}^{p_0}(\Vphi)\leq \gamma_0p_0\bar M_{\vec 0,p_0}-\gamma_0p_0 V_{\vec 0}^{p_0}(\Vphi)\text{ for all }\Vphi\in\C_+.
\end{equation}
Therefore, similarly to the process of getting \eqref{t-eq6}, we obtain
\begin{equation}\label{t-eq1}
\int_0^T \Ephi V_{\vec 0}^{p_0}(\vec X_t)dt\leq \frac 1{p_0\gamma_0}(T\gamma_0p_0\bar M_{p_0,\vec 0}+V_{\vec 0}^{p_0}(\Bphi)),\;\forall T\geq 0.
%\text{ provided }V_{\vec 0}^{p_0}(\Bphi)<R.
\end{equation}
Combining \eqref{t-eq1} and the Markov inequality leads to that for any $\eps,R>0$ there exists a finite constant $R_1=R_1(\eps,R)$ such that
\begin{equation}\label{t-eq2}
\dfrac 1t\int_0^t \Ephi\1_{\{V_{\vec 0}^{p_0}(\vec X_s)<R_1\}}ds\geq 1-\frac\eps 2,\text{ provided } V_{\vec 0}(\Bphi)<R.
\end{equation}
Because of \eqref{t-eq2}, Lemma \ref{Holder}, and the Markov property of $\vec X_t$, for any $\eps,R>0$, there exists a compact subset $\K=\K(\eps,R):=\{\Vphi: \|\Vphi\|_{2\alpha}\leq R_4\}$ of $\C_+$ satisfying
\begin{equation}\label{t-eq3}
\dfrac 1t\int_{2r}^{t+2r} \Ephi\1_{\{\vec X_s\in \K\}}ds\geq 1-\eps,\text{ provided } V_{\vec 0}(\Bphi)<R.
\end{equation}
In the above, $R_4=R_4(\eps,R)$ is determined as in Lemma \ref{Holder};  the compactness of $\K$ in $\C$ follows the Sobolev embedding theorem.

For each $t>r$, define the following occupation measures
\begin{equation}\label{8-7-pi}\Pi^{\Bphi}_t(\cdot):=\frac 1t\Ephi\int_r^t \1_{\{\vec X_s\in\cdot\}}ds.\end{equation}
Then it follows from \eqref{t-eq3} that for $V_{\vec 0}(\Bphi)<R$,
\begin{equation}\label{t-eq4}
\left\{\Pi_t^{\Bphi}(\cdot): t\geq 2r\right\}\text{ is tight}.
\end{equation}
Note that $\Pi^{\Bphi}_t(\cdot)$ defined in \eqref{8-7-pi} is  a subprobability measure for each $t>2r$. However, its weak$^*$-limit is still a probability measure.

\begin{lm}\label{lem-integrable}
Under Assumption {\rm\ref{asp-2}(b)},
%one has that
there is a constant, still denoted by $H_1$
%as in \eqref{t-eq7}
(for simplicity of notation)
%$H_1'$ $($still denoted by $H_1$ as in \eqref{t-eq7} for simplicity of notation$)$
such that
\begin{equation}\label{h2-eq0}
\begin{aligned}
\int_r^T \Ephi\bigg(&\Big(1+\sum_{i=1}^nc_iX_i(t)\Big)^{p_0}h_1(\vec X(t))\\
&+\int_{-r}^0 \Big(1+\sum_{i=1}^n c_i X_i(t+s)\Big)^{p_0}h_1(\vec X(t+s))\mu_1(ds)\bigg)dt\\
\leq& H_1\left(T+V_{\vec 0}^{p_0}(\Bphi)\right),\;\forall T\geq r.%\text{ provided }\norm{\Bphi}<R.
\end{aligned}
\end{equation}
\end{lm}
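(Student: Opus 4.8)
The plan is to reduce \eqref{h2-eq0} to the single a priori estimate that there is a constant $C>0$, depending only on $p_0,\gamma_b,\gamma_0,A_0$ and $\bar M_{p_0,\vec 0}$, such that
\[
\Ephi\int_0^T V_{\vec 0}^{p_0}(\vec X_t)\sum_{i=1}^n\big(\abs{f_i(\vec X_t)}+g_i^2(\vec X_t)\big)\,dt\le C\big(T+V_{\vec 0}^{p_0}(\Bphi)\big),\qquad T\ge 0,
\]
and then to feed this into the left-hand inequality of Assumption \ref{asp-2}(b). To obtain the estimate, I take $\Vrho=\vec 0$ in \eqref{est-LV} and drop the nonpositive terms $-\gamma_0$, $-Ah(\vec x)$ and $-A_2\gamma\int_{-r}^0\mu(ds)\int_s^0e^{\gamma(u-s)}h(\Vphi(u))\,du$ (legitimate because $h\ge 1$ and $A,A_2,\gamma>0$), which leaves
\[
\Lom V_{\vec 0}^{p_0}(\Vphi)\le p_0V_{\vec 0}^{p_0}(\Vphi)\Big[A_0\1_{\{\abs{\vec x}<M\}}-\tfrac{\gamma_b}{2}\sum_{i=1}^n\big(\abs{f_i(\Vphi)}+g_i^2(\Vphi)\big)\Big].
\]
Applying the functional It\^o formula \eqref{f.Ito} to $V_{\vec 0}^{p_0}(\vec X_t)$ and using $\Ephi V_{\vec 0}^{p_0}(\vec X_T)\ge 0$ gives
\[
\tfrac{p_0\gamma_b}{2}\,\Ephi\int_0^T V_{\vec 0}^{p_0}(\vec X_t)\sum_{i=1}^n\big(\abs{f_i(\vec X_t)}+g_i^2(\vec X_t)\big)\,dt\le V_{\vec 0}^{p_0}(\Bphi)+p_0A_0\,\Ephi\int_0^T V_{\vec 0}^{p_0}(\vec X_t)\,dt,
\]
and bounding the last integral by \eqref{t-eq1} yields the displayed constant $C$.

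With the a priori estimate in hand the lemma follows quickly. By the left inequality in \eqref{A2}, $h_1(\vec X(t))\le b_1^{-1}\sum_{i=1}^n\big(\abs{f_i(\vec X_t)}+g_i^2(\vec X_t)\big)$, and since $V_{\vec 0}(\Vphi)\ge 1+\vec c^\top\vec x$ we have $\big(1+\sum_{i=1}^n c_iX_i(t)\big)^{p_0}\le V_{\vec 0}^{p_0}(\vec X_t)$; multiplying these two facts, integrating over $[r,T]\subset[0,T]$, and invoking the a priori estimate bounds the current-state part of \eqref{h2-eq0} by $b_1^{-1}C\big(T+V_{\vec 0}^{p_0}(\Bphi)\big)$. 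For the delayed part I use Tonelli's theorem and, for each fixed $s\in[-r,0]$, the change of variables $u=t+s$; since $r+s\ge 0$ and $T+s\le T$,
\[
\int_r^T\Ephi\Big(1+\sum_{i=1}^n c_iX_i(t+s)\Big)^{p_0}h_1(\vec X(t+s))\,dt\le\int_0^T\Ephi\Big(1+\sum_{i=1}^n c_iX_i(u)\Big)^{p_0}h_1(\vec X(u))\,du\le b_1^{-1}C\big(T+V_{\vec 0}^{p_0}(\Bphi)\big)
\]
uniformly in $s$, and integrating against the probability measure $\mu_1(ds)$ preserves this bound. Adding the two contributions and relabelling the constant $H_1$ (so it is consistent with \eqref{t-eq7}) gives \eqref{h2-eq0}.

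The only step needing care is the It\^o computation in the first paragraph, since $\Lom V_{\vec 0}^{p_0}$ carries the a priori unbounded factor $\sum_i\big(\abs{f_i}+g_i^2\big)$: one must first apply \eqref{f.Ito} to the process stopped at $\tau_k=\inf\{t\ge 0:\ \abs{g(\vec X_t)}\vee V_{\vec 0}(\vec X_t)>k\}$, derive the inequality there, and then let $k\to\infty$ using monotone convergence for the nonnegative left-hand integral together with the moment bound \eqref{EV-bounded-ne}. This is precisely the localization already used after \eqref{supV} and in the proof of Theorem \ref{existence}, so I would merely point to it rather than repeat it.
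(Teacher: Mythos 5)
Your proposal is correct and follows essentially the same route as the paper: take $\Vrho=\vec 0$ in \eqref{est-LV}, apply the functional It\^o formula (with the standard localization), use $V_{\vec 0}\geq 1+\vec c^\top\vec x$ and the left inequality in \eqref{A2}, and handle the delayed term by Tonelli and a time shift. The only cosmetic difference is that the paper absorbs $A_0\1_{\{|\vec x|<M\}}-\gamma_0-A_2\gamma\int\cdots$ directly into the constant $\gamma_0 p_0\bar M_{p_0,\vec 0}$ (as in Theorem \ref{existence}), whereas you keep $A_0V_{\vec 0}^{p_0}$ and invoke \eqref{t-eq1}; both yield the same bound.
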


\begin{proof}
By \eqref{est-LV}, we have
\begin{equation*}
\Lom V_{\vec 0}^{p_0}(\Vphi)\leq \gamma_0p_0\bar M_{p_0,\vec 0}-\frac{p_0\gamma_b}2 V_{\vec 0}^{p_0}(\Vphi)\sum_{i=1}^n\Big(\abs{f_i(\Vphi)}+g_i^2(\Vphi)\Big).
\end{equation*}
In view of the functional It\^o formula,
\begin{equation}%\label{h2-eq1}
\begin{aligned}
\Ephi V_{\vec 0}^{p_0}(\vec X_t)\leq& V_{\vec 0}^{p_0}(\Bphi)+\gamma_0p_0\bar M_{p_0,\vec 0}t\\
&-\frac{p_0\gamma_b}2 \Ephi\int_0^t \Big(1+\sum_{i=1}^nc_iX_i(s)\Big)^{p_0}\sum_{i=1}^n\Big(\abs{f_i(\vec X_s)}+g_i^2(\vec X_s)\Big)ds.
\end{aligned}
\end{equation}
Therefore,
we get
\begin{equation}\label{h2-eq1}
\begin{aligned}
\int_0^T \Ephi&\left(1+\sum_{i=1}^nc_iX_i(t)\right)^{p_0}\sum_{i=1}^n\Big(\abs{f_i(\vec X_s)}+g_i^2(\vec X_s)\Big)ds\\
&\leq \dfrac 2{p_0\gamma_b}\left(V_{\vec 0}^{p_0}(\Bphi)+T\gamma_0p_0\bar M_{p_0,\vec 0}\right)\text{ for all }T\geq 0.
\end{aligned}
\end{equation}
In view of \eqref{A2} and \eqref{h2-eq1},
\begin{equation}\label{h2-eq2}
\int_0^T \Ephi\left(1+\sum_{i=1}^nc_iX_i(t)\right)^{p_0}h_1(\vec X_s)ds\leq \dfrac 2{b_1p_0\gamma_b}\left(V_{\vec 0}^{p_0}(\Bphi)+T\gamma_0p_0\bar M_{p_0,\vec 0}\right),\;\forall T\geq 0.
\end{equation}
Hence, we obtain \eqref{h2-eq0}.
\end{proof}

\begin{rem}
It is easily seen that $\sum_i \abs{f_i(\Vphi)}+g_i^2(\Vphi)$ is uniformly integrable owing to either \eqref{t-eq7} and Assumption \ref{asp-2}(a) or \eqref{h2-eq0} and Assumption \ref{asp-2}(b).
 Lemma \ref{lem-integrable} reveals that
 %the
 Assumption \ref{asp-2}(b) can play the same role as
 %the
 Assumption \ref{asp-2}(a) in guaranteeing the uniform integrability of $\sum_i \abs{f_i(\Vphi)}+g_i^2(\Vphi)$. Hence, from now on, when we assume
 %the
 Assumption \ref{asp-2} holds, without loss of generality, we can assume that
 %the
 Assumption \ref{asp-2}(a) holds.
\end{rem}

\begin{lm}\label{weak-converge}
Assume that $(\Bphi_k)_{k\in\N}\subset \C_+$, $(T_k)_{k\in\N}\subset\R_+$ are such that $V_{\vec 0}(\Bphi_k)\leq R$, $T_k> r$, $\lim_{k\to\infty}T_k=\infty$,  and the sequence $(\Pi_{T_k}^{\Bphi_k})_{k\in\N}$ converges weakly to a probability measure $\pi$. Then $\pi$ is an invariant probability measure and moreover
%one has
\begin{equation}\label{convergence-G}
\lim_{k\to\infty}\int_\C G(\Vphi)\Pi_{T_k}^{\Bphi_k}(d\Vphi)=\int_\C G(\Vphi)\pi(d\Vphi),
\end{equation}
for any function $G:\C_+\to\R$ satisfying
\begin{equation}\label{in-condition}
\abs{G(\Vphi)}\leq K_G\left((1+\vec c^\top\vec x)^ph(\vec x)+\int_{-r}^0 \Big(1+\sum_{i=1}^n c_i\varphi_i(s)\Big)^ph(\Vphi(s))\mu(ds)\right),
\end{equation}
for some $p<p_0$, $\text{ where } \vec x:=\Vphi(0)$.
%{\color{red}
Likewise, if Assumption {\rm\ref{asp-2} (b)} holds, we also have \eqref{convergence-G} for $G$ satisfying
$$\abs{G(\Vphi)}\leq K_G\left((1+\vec c^\top\vec x)^ph_1(\vec x)+\int_{-r}^0 \Big(1+\sum_{i=1}^n c_i\varphi_i(s)\Big)^ph_1(\Vphi(s))\mu_1(ds)\right),
$$
$\text{ where } \vec x:=\Vphi(0).$
\end{lm}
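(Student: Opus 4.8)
The plan is to prove Lemma \ref{weak-converge} in two stages: first that any weak limit $\pi$ of the occupation measures $\Pi_{T_k}^{\Bphi_k}$ is invariant for the segment process $(\vec X_t)$, and second that the integrals $\int_\C G\,d\Pi_{T_k}^{\Bphi_k}$ converge to $\int_\C G\,d\pi$ for all $G$ with the stated (super-linear in $h$) growth, which is a uniform-integrability statement rather than a pure weak-convergence statement.

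\textbf{Invariance of $\pi$.} First I would recall that the $\C_+$-valued process $(\vec X_t)$ is Markov with a Feller transition semigroup $(P_t)$ on $\C_+$ by Proposition \ref{theorem-2.4}(ii). For a bounded continuous $\Psi:\C_+\to\R$ and fixed $h_0>0$, I would write
\begin{equation*}
\int_\C P_{h_0}\Psi\,d\Pi_{T_k}^{\Bphi_k}-\int_\C \Psi\,d\Pi_{T_k}^{\Bphi_k}
=\frac1{T_k}\Ephi\!\left(\int_{r}^{T_k}\!\big(P_{h_0}\Psi(\vec X_s)-\Psi(\vec X_s)\big)ds\right),
\end{equation*}
and then, using the Markov property $\Ephi[\Psi(\vec X_{s+h_0})\mid\F_s]=P_{h_0}\Psi(\vec X_s)$ together with Fubini, recognize the right-hand side as a telescoping/boundary difference of the form
$\frac1{T_k}\Ephi\big(\int_{T_k}^{T_k+h_0}\Psi(\vec X_s)ds-\int_{r}^{r+h_0}\Psi(\vec X_s)ds\big)$, whose absolute value is at most $\frac{2h_0\|\Psi\|_\infty}{T_k}\to0$. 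Since $P_{h_0}\Psi$ is bounded and continuous by the Feller property, both sides pass to the limit under weak convergence $\Pi_{T_k}^{\Bphi_k}\Rightarrow\pi$, yielding $\int P_{h_0}\Psi\,d\pi=\int\Psi\,d\pi$ for all bounded continuous $\Psi$ and all $h_0>0$; hence $\pi$ is invariant. (One subtlety is that $\Pi_{T_k}^{\Bphi_k}$ is only a subprobability measure, but by \eqref{t-eq4}/\eqref{t-eq3} the family is tight, so no mass escapes and $\pi$ is a genuine probability measure, as remarked after \eqref{t-eq4}.)

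\textbf{Convergence for $G$ with super-linear growth.} This is the part where ordinary weak convergence is insufficient, since $G$ is unbounded; the key is the a priori bound \eqref{t-eq7} (under Assumption \ref{asp-2}(a)), respectively \eqref{h2-eq0} (under Assumption \ref{asp-2}(b)), which controls a strictly higher power, namely the integrals of $(1+\vec c^\top\vec x)^{p_0}h(\vec x)$ (plus the history term) rather than the $p$-th power with $p<p_0$ appearing in \eqref{in-condition}. I would argue as follows. For $N>0$ let $G_N:=(G\wedge N)\vee(-N)$, a bounded continuous function; then $\int G_N\,d\Pi_{T_k}^{\Bphi_k}\to\int G_N\,d\pi$ by weak convergence. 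It remains to bound the tails $\int_\C |G-G_N|\,d\Pi_{T_k}^{\Bphi_k}$ and $\int_\C|G-G_N|\,d\pi$ uniformly and show they tend to $0$ as $N\to\infty$. On the set $\{|G|>N\}$, \eqref{in-condition} forces $(1+\vec c^\top\vec x)^p h(\vec x)+\int_{-r}^0(1+\sum c_i\varphi_i(s))^p h(\Vphi(s))\mu(ds)$ to be $\ge N/K_G$, and since $h\ge1$ this in turn forces the corresponding $p_0$-quantity to be at least a constant times $N^{p_0/p}$; writing $|G-G_N|\le |G|\,\1_{\{|G|>N\}}$ and using $|G|\le K_G(\cdots)^{\!}$ together with the elementary inequality $a\,\1_{\{a\ge \delta\}}\le \delta^{-(p_0/p-1)}a^{p_0/p}$ (valid since $p_0/p>1$), I would dominate $\int_\C |G-G_N|\,d\Pi_{T_k}^{\Bphi_k}$ by $C N^{-(p_0/p-1)}$ times the $\Pi_{T_k}^{\Bphi_k}$-average of the $p_0$-quantity, which by \eqref{t-eq7} is bounded by $C H_1(1+V_{\vec 0}^{p_0}(\Bphi_k)/T_k)\le C H_1(1+R^{p_0}/r)$, uniformly in $k$. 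Hence $\sup_k\int_\C|G-G_N|\,d\Pi_{T_k}^{\Bphi_k}\to0$ as $N\to\infty$; applying Fatou (or the already-established convergence for truncations) the same bound holds for $\pi$. Combining the three estimates and sending first $k\to\infty$ then $N\to\infty$ gives \eqref{convergence-G}. The argument under Assumption \ref{asp-2}(b) is identical, replacing $h,\mu$ by $h_1,\mu_1$ and invoking \eqref{h2-eq0} in place of \eqref{t-eq7}.

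\textbf{Main obstacle.} The invariance part is routine once the Feller property (Proposition \ref{theorem-2.4}) is in hand. The real work is the uniform-integrability/tail estimate: one must carefully convert the hypothesis \eqref{in-condition}, which only gives a $p$-th power control of $G$ pointwise, into a tail bound against the $p_0$-th power controlled by \eqref{t-eq7}, exploiting the strict gap $p<p_0$ and $h\ge1$; keeping track of the history-integral term (the $\mu(ds)$ piece) on both the $G$-side and the moment-bound side, and making sure the constant is genuinely uniform in $k$ (using $T_k>r$ so that $V_{\vec 0}^{p_0}(\Bphi_k)/T_k$ stays bounded), is the delicate bookkeeping. Once that tail bound is established, the truncation argument closes the proof cleanly.
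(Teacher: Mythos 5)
Your Krylov--Bogoliubov argument for the invariance of $\pi$ is fine (the paper simply cites \cite{EK09,EHS15} for this step, so your Feller-plus-telescoping computation is a legitimate substitute), and truncating $G$ in its range rather than cutting off in the state variable is a reasonable reorganization of the paper's scheme. The genuine gap is in your tail estimate, which is the heart of the lemma. Write $\Theta_q(\Vphi):=(1+\vec c^\top\vec x)^{q}h(\vec x)+\int_{-r}^0\big(1+\sum_i c_i\varphi_i(s)\big)^{q}h(\Vphi(s))\mu(ds)$ for $q=p,p_0$. Your claim that $\Theta_p(\Vphi)\ge N/K_G$ forces $\Theta_{p_0}(\Vphi)$ to be at least a constant times $N^{p_0/p}$ --- equivalently that $\Theta_p^{p_0/p}\le C\,\Theta_{p_0}$ pointwise --- is false whenever $h$ is unbounded, which is the typical situation (e.g.\ $h(\vec x)=1+\abs{\vec x}$ in Section \ref{subsec:1}). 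The reason is that $h$ enters both $\Theta_p$ and $\Theta_{p_0}$ \emph{linearly}: the only gain coming from $p<p_0$ is the factor $(1+\vec c^\top\vec x)^{p_0-p}$, whereas your power trick also raises $h$ to the exponent $p_0/p>1$. Concretely, with $h(\vec x)=1+\abs{\vec x}$ the quantity $\big((1+\vec c^\top\vec x)^{p}h(\vec x)\big)^{p_0/p}$ grows like $\abs{\vec x}^{\,p_0+p_0/p}$ while $(1+\vec c^\top\vec x)^{p_0}h(\vec x)$ grows like $\abs{\vec x}^{\,p_0+1}$, and $p_0/p>1$; the same mismatch occurs in the history integral. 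Consequently the asserted bound $\int\abs{G-G_N}\,d\Pi_{T_k}^{\Bphi_k}\le C N^{-(p_0/p-1)}\int\Theta_{p_0}\,d\Pi_{T_k}^{\Bphi_k}$ does not follow, and your uniform tail control collapses as written.

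The step can be repaired, and the repair is precisely the mechanism of the paper's proof (its cutoff $u_{l_\eps}$ together with the ratio estimate \eqref{t-eq8}): in the ratio the function $h$ cancels, $(1+\vec c^\top\vec y)^{p}h(\vec y)\big/\big((1+\vec c^\top\vec y)^{p_0}h(\vec y)\big)=(1+\vec c^\top\vec y)^{p-p_0}$, so for every $\eps>0$ there is a $C_\eps$ with $(1+\vec c^\top\vec y)^{p}h(\vec y)\le \eps\,(1+\vec c^\top\vec y)^{p_0}h(\vec y)+C_\eps$ on $\R^n_+$, hence $\Theta_p\le\eps\,\Theta_{p_0}+2C_\eps$ pointwise. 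Combining this with the uniform moment bound $\sup_k\int\Theta_{p_0}\,d\Pi_{T_k}^{\Bphi_k}<\infty$ coming from \eqref{t-eq7} (or \eqref{h2-eq0} under Assumption \ref{asp-2}(b); here $T_k>r$ keeps $V_{\vec 0}^{p_0}(\Bphi_k)/T_k$ bounded, as you note) and a Markov-inequality bound on $\Pi_{T_k}^{\Bphi_k}\{\abs{G}>N\}$ yields the uniform integrability qualitatively, without any rate in $N$; your truncation $G_N$ and the weak convergence then close the argument (both your proof and the paper's implicitly need $G$ continuous at this point). With that substitution your route becomes essentially the paper's: the paper truncates in the state variable via $u_{l_\eps}$ rather than in the range of $G$, but the decisive ingredient --- the comparison of the $p$- and $p_0$-quantities in which $h$ cancels, played against \eqref{t-eq7} --- is the same.
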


\begin{proof}
For the proof of
$\pi$
being an invariant probability measure, we refer to \cite[Theorem 9.9]{EK09}, or \cite [Proposition 6.4]{EHS15} with a slight modification. We proceed to prove the second conclusion.
For any $\eps>0$,
let $l_\eps$ be sufficiently large such that for any $\Vphi$ satisfying $|\vec x|+\int_{-r}^0|\Vphi(s)|\mu(ds)\geq 2l_\eps$,
\begin{equation}\label{t-eq8}
\frac{(1+\vec c^\top\vec x)^ph(\vec x)+\int_{-r}^0  \Big(1+\sum_{i=1}^n c_i\varphi_i(s)\Big)^ph(\Vphi(s))\mu(ds)}{(1+\vec c^\top\vec x)^{p_0}h(\vec x)+\int_{-r}^0  \Big(1+\sum_{i=1}^n c_i\varphi_i(s)\Big)^{p_0}h(\Vphi(s))\mu(ds)}\leq \frac{\eps}{K_GH_1(1+R^{p_0})}.
\end{equation}
The above inequality follows from the fact that
$$\lim_{|\vec x|\to\infty}\frac{(1+\vec c^\top\vec x)^ph(\vec x)}{(1+\vec c^\top\vec x)^{p_0}h(\vec x)}=0$$
and
$$\lim_{\int_{-r}^0|\Vphi(s)|\mu(ds)\to\infty}\frac{\int_{-r}^0  \left(1+\sum_{i=1}^n c_i\varphi_i(s)\right)^ph(\Vphi(s))\mu(ds)}{
\int_{-r}^0  \left(1+\sum_{i=1}^n c_i\varphi_i(s)\right)^{p_0}h(\Vphi(s))\mu(ds)}=0 \,\,\text{ (because }h(\cdot)\geq1).$$

Denote by $u_{l_\eps}:\C\to[0,1]$
a continuous function satisfying
\[
u_{l_\eps}(\Vphi)=
\begin{cases}
1\text { if } |\vec x|+\int_{-r}^0|\Vphi(s)|\mu(ds)\leq 2l_\eps,\\
0\text{ if } |\vec x|+\int_{-r}^0|\Vphi(s)|\mu(ds)\geq 4l_\eps.
\end{cases}
\]
By Tonelli's theorem, we get that
\begin{equation}\label{t-eq9}
\begin{aligned}
\int_\C&\left((1+\vec c^\top\vec x)^{p_0}h(\vec x)+\int_{-r}^0 \Big(1+\sum_{i=1}^n c_i\varphi_i(s)\Big)^{p_0}h(\Vphi(s))\mu(ds)\right)\Pi_{T_k}^{\Bphi_k}(d\Vphi)\\
=&\dfrac 1{T_k}\int_r^{T_k}\E_{\Bphi_k}\bigg(\Big(1+\sum_{i=1}^nc_iX_i(t)\Big)^{p_0}h(\vec X(t))\\
&\hspace{3cm}+\int_{-r}^0 \Big(1+\sum_{i=1}^n c_i X_i(t+s)\Big)h(\vec X(t+s))\mu(ds)\bigg)dt.
\end{aligned}
\end{equation}
Because of \eqref{in-condition}, \eqref{t-eq8}, \eqref{t-eq9}, and \eqref{t-eq7}, one gets
\begin{equation}\label{t-eq10}
\begin{aligned}
\int_\C &\left(1-u_{l_\eps}(\Vphi)\right)|G(\Vphi)|\Pi_{T_k}^{\Bphi_k}(d\Vphi)\\
\leq & K_G\int_\C\left(1-u_{l_\eps}(\Vphi)\right)\bigg((1+\vec c^\top\vec x)^ph(\vec x)\\
&\hspace{3cm}+\int_{-r}^0  \Big(1+\sum_{i=1}^n c_i\varphi_i(s)\Big)^ph(\Vphi(s))\mu(ds)\bigg)\Pi_{T_k}^{\Bphi_k}(d\Vphi)\\
\leq &\frac{\eps}{H_1(1+R^{p_0})}\int_\C\left(1-u_{l_\eps}(\Vphi)\right)\bigg((1+\vec c^\top\vec x)^{p_0}h(\vec x)\\
&\hspace{3cm}+\int_{-r}^0 \Big(1+\sum_{i=1}^n c_i\varphi_i(s)\Big)^{p_0}h(\Vphi(s))\mu(ds)\bigg)\Pi_{T_k}^{\Bphi_k}(d\Vphi)\\
\leq &\eps.
\end{aligned}
\end{equation}
Similarly, because of \eqref{t-eq7} and  $\pi$ being invariant, we have
\begin{equation}\label{t-eq11}
\begin{aligned}
\int_\C \left(1-u_{l_\eps}(\Vphi)\right)|G(\Vphi)|\pi(d\Vphi)
\leq \eps.
\end{aligned}
\end{equation}
The weak convergence of $\Pi_{T_k}^{\Bphi_k}$ to $\pi$ implies 
\begin{equation}\label{t-eq12}
\begin{aligned}
\lim_{k\to\infty}\int_\C u_{l_\eps}(\Vphi)|G(\Vphi)|\Pi_{T_k}^{\Bphi_k}(d\Vphi)=\int_\C u_{l_\eps}(\Vphi)|G(\Vphi)|\pi(d\Vphi).
\end{aligned}
\end{equation}
Combining \eqref{t-eq10}, \eqref{t-eq11}, and \eqref{t-eq12} yields that
\begin{equation*}
\begin{aligned}
\limsup_{k\to\infty}\abs{\int_\C |G(\Vphi)|\Pi_{T_k}^{\Bphi_k}(d\Vphi)-\int_\C |G(\Vphi)|\pi(d\Vphi)}
\leq 2\eps.
\end{aligned}
\end{equation*}
Hence, the proof of the lemma is concluded by letting $\eps\to0$.
\end{proof}

\begin{lm}\label{log-Lap}
Let $Y$ be a random variable, $\theta_0>0$ be a constant, and suppose $$\E \exp(\theta_0 Y)+\E \exp(-\theta_0 Y)\leq K_1$$
for some finite constant $K_1$.
Then the log-Laplace transform
$\eta(\theta)=\ln\E\exp(\theta Y)$
is twice differentiable on $\left[0,\frac{\theta_0}2\right)$ and
$$\dfrac{d\eta}{d\theta}(0)= \E Y,$$
$$0\leq \dfrac{d^2\eta}{d\theta^2}(\theta)\leq K_2\,, \theta\in\left[0,\frac{\theta_0}2\right),$$
 for some $K_2>0$ depending only on $K_1$.
\end{lm}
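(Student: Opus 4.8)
The plan is to work with the moment generating function $M(\theta):=\E\exp(\theta Y)$ and recover $\eta=\ln M$ from it. First I would record the elementary domination: for every integer $k\ge 0$ and every $\theta$ with $|\theta|\le\theta_0/2$ one has $|y|^k e^{\theta y}\le C_k\big(e^{\theta_0 y}+e^{-\theta_0 y}\big)$ for all $y\in\R$, where $C_k:=\sup_{y\in\R}|y|^k e^{-(\theta_0/2)|y|}<\infty$ depends only on $k$ and $\theta_0$; indeed on $\{y\ge 0\}$ one bounds $e^{\theta y}$ by $e^{(\theta_0/2)y}$ and writes $|y|^k e^{(\theta_0/2)y}=\big(|y|^k e^{-(\theta_0/2)y}\big)e^{\theta_0 y}$, and symmetrically on $\{y<0\}$. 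Taking expectations, $\E\big(|Y|^k e^{\theta Y}\big)\le C_k K_1<\infty$ for all $\theta\in[0,\theta_0/2)$ (and uniformly on compact subsets of $(-\theta_0,\theta_0)$). This legitimizes differentiating under the expectation twice, via dominated convergence applied to the difference quotients, so that $M$ is (at least) twice continuously differentiable on $[0,\theta_0/2)$ with $M'(\theta)=\E\big(Y e^{\theta Y}\big)$ and $M''(\theta)=\E\big(Y^2 e^{\theta Y}\big)$, and $M''(\theta)\le C_2 K_1$ on that interval.

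Next I would produce a lower bound on $M$ that keeps $\eta=\ln M$ under control. By Jensen's inequality $M(\theta)\ge e^{\theta\,\E Y}$, and since $\theta_0|Y|\le e^{\theta_0|Y|}\le e^{\theta_0 Y}+e^{-\theta_0 Y}$ we get $|\E Y|\le K_1/\theta_0$; hence for $\theta\in[0,\theta_0/2)$ one has $\theta\,\E Y\ge-\tfrac{\theta_0}{2}\cdot\tfrac{K_1}{\theta_0}=-\tfrac{K_1}{2}$, so $M(\theta)\ge e^{-K_1/2}>0$ (and $M(0)=1$). Consequently $\eta=\ln M$ is twice differentiable on $[0,\theta_0/2)$ with $\eta'=M'/M$ and $\eta''=M''/M-(M'/M)^2$, and at $\theta=0$ this gives $\eta'(0)=M'(0)/M(0)=\E Y$.

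It then remains to bound $\eta''$. Non-negativity is immediate from the Cauchy--Schwarz inequality $M'(\theta)^2=\big(\E\big(Y e^{\theta Y}\big)\big)^2\le \E\big(Y^2 e^{\theta Y}\big)\,\E\big(e^{\theta Y}\big)=M''(\theta)\,M(\theta)$, i.e.\ $\eta''(\theta)\ge 0$ (equivalently, $\eta''(\theta)$ is the variance of $Y$ under the exponentially tilted law $M(\theta)^{-1}e^{\theta Y}\,d\PP$, which is manifestly nonnegative). For the upper bound, discard the nonpositive term $-(M'/M)^2$ and combine the two estimates above: $\eta''(\theta)\le M''(\theta)/M(\theta)\le C_2 K_1\, e^{K_1/2}=:K_2$, a constant depending only on $K_1$ (and on the fixed parameter $\theta_0$). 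This finishes the argument.

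The only genuinely delicate step is the justification of differentiation under the expectation sign together with the attendant uniform control of $\E\big(|Y|^k e^{\theta Y}\big)$ near the endpoint $\theta_0/2$; once the domination $|y|^k e^{\theta y}\le C_k\big(e^{\theta_0 y}+e^{-\theta_0 y}\big)$ is in place this is routine, and everything else (Jensen, Cauchy--Schwarz, the lower bound on $M$) is elementary.
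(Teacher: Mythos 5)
Your proof is correct and complete. The paper itself gives no argument for this lemma, deferring to \cite[Proof of Lemma 3.5]{HN18}; your write-up is essentially that standard argument made self-contained: the domination $|y|^k e^{\theta y}\le C_k(e^{\theta_0 y}+e^{-\theta_0 y})$ for $\theta\in[0,\theta_0/2)$ justifies differentiating $M(\theta)=\E e^{\theta Y}$ twice under the expectation, Jensen plus $|\E Y|\le K_1/\theta_0$ gives the lower bound $M(\theta)\ge e^{-K_1/2}$, Cauchy--Schwarz (equivalently, $\eta''$ is the variance of $Y$ under the tilted law) gives $\eta''\ge 0$, and dropping $-(M'/M)^2$ gives $\eta''\le M''/M\le C_2K_1e^{K_1/2}$. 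Your parenthetical remark that $K_2$ necessarily depends on $\theta_0$ as well as $K_1$ is accurate (a scaling of $Y$ shows dependence on $K_1$ alone is impossible) and harmless for the paper's use, where $\theta_0=p_0$ is a fixed constant.
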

{\color{blue}
\begin{proof}
	The proof of this lemma can be found in \cite[Proof of Lemma 3.5]{HN18}.
\end{proof}	
}
\section{Persistence}\label{sec:per}
This section is devoted to proving Theorem \ref{p-theorem1} and Theorem \ref{thm3.3}.
It is shown in \cite[Lemma 4]{SBA11}, by the min-max
principle that Assumption \ref{asp-lambda>0} is equivalent to the existence of $\Vrho^*=(\rho^*_1,\dots,\rho^*_n)$ with $\rho^*_i>0$ such
that
\begin{equation}\label{p-eq1}
\inf_{\pi\in\M}\left\{\sum_{i=1}^n \rho^*_i\lambda_i(\pi)\right\}:=2\kappa^*>0.
\end{equation}
By rescaling if necessary, we can assume that $\abs{\Vrho^*}$ is sufficiently small to satisfy condition \eqref{cd-p0}.

\begin{lm}\label{lem-q0}
Assume Assumptions {\rm\ref{asp1}} and {\rm\ref{asp-2}} hold.
For any invariant measure $\pi$, one has
\begin{equation*}
\int_{\C_+} Q_{\vec 0}(\Vphi)\pi(d\Vphi)=0,
\end{equation*}
where
\begin{equation*}
\begin{aligned}
Q_{\vec 0}(\Vphi)=&A_2 h(\vec x)\int_{-r}^0 e^{-\gamma s}\mu(ds)-A_2\int_{-r}^0h\big(\Vphi(s)\big)\mu(ds)
\\&-A_2\gamma \int_{-r}^0\mu(ds)\int_s^0e^{\gamma(u-s)}h\big(\Vphi(u)\big)du
\\&+\dfrac{\sum_{i=1}^n c_ix_if_i(\Vphi)}{1+\vec c^\top\vec x}-\dfrac 12\sum_{i,j=1}^n  \dfrac{c_ic_j\sigma_{ij}x_ix_jg_i(\Vphi)g_j(\Vphi)}{\Big(1+\vec c^\top\vec x\Big)^2}.
\end{aligned}
\end{equation*}
\end{lm}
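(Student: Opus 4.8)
The plan is to read off $Q_{\vec 0}$ as the infinitesimal drift of $U_{\vec 0}:=\ln V_{\vec 0}$ and to derive the identity from the functional It\^o formula combined with the stationarity of $\pi$. Indeed, specializing the computation \eqref{est-LU} to $\Vrho=\vec 0$ makes the terms $\sum_i\rho_i\big(f_i(\Vphi)-\sigma_{ii}g_i^2(\Vphi)\big)$ vanish, so that $\Lom U_{\vec 0}(\Vphi)=Q_{\vec 0}(\Vphi)$ for every $\Vphi\in\C_+$. Moreover $U_{\vec 0}\in\BF$: it is continuous, and $\partial_t U_{\vec 0}$, $\partial_i U_{\vec 0}(\Vphi)=c_i/(1+\vec c^\top\vec x)$, $\partial_{ij}U_{\vec 0}(\Vphi)=-c_ic_j/(1+\vec c^\top\vec x)^2$ are continuous and bounded on bounded subsets of $\DD$ (here we use that $h$ is continuous, hence locally bounded), so \eqref{f.Ito} applies to $U_{\vec 0}$.

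Next I would record the integrability facts for an invariant probability measure $\pi$. Since $\vec 0$ has $\rho_i\ge 0$, the drift estimate \eqref{exists-1111} for $V_{\vec 0}^{p_0}$ together with the non-explosion of the solution (Theorem \ref{existence}) gives, by a standard Foster--Lyapunov argument (stopping times, monotone and Fatou convergence), $\int_{\C_+}V_{\vec 0}^{p_0}\,d\pi\le\bar M_{p_0,\vec 0}<\infty$; since $0\le U_{\vec 0}=p_0^{-1}\ln V_{\vec 0}^{p_0}\le p_0^{-1}V_{\vec 0}^{p_0}$, also $\int_{\C_+}U_{\vec 0}\,d\pi<\infty$. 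Letting $T\to\infty$ in \eqref{t-eq7} with initial law $\pi$ and using stationarity yields $\int_{\C_+}\big(h(\vec x)+\int_{-r}^0 h(\Vphi(s))\mu(ds)\big)\,d\pi<\infty$, and then Assumption \ref{asp-2}(a) gives $\int_{\C_+}\sum_i\big(|f_i(\Vphi)|+g_i^2(\Vphi)\big)\,d\pi<\infty$. Bounding the remaining term $\int_{-r}^0\mu(ds)\int_s^0 e^{\gamma(u-s)}h(\Vphi(u))\,du$ by a constant multiple of $\int_{-r}^0 h(\Vphi(u))\,du$, whose $\pi$-mean is $r\int h\,d\pi$ by stationarity, we conclude $Q_{\vec 0}\in L^1(\pi)$.

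Now I would apply \eqref{f.Ito} to $U_{\vec 0}$ along the solution, stopped at $\tau_k:=\inf\{t\ge 0:\ \|\vec X_t\|\vee V_{\vec 0}(\vec X_t)\ge k\}$:
\[
U_{\vec 0}(\vec X_{t\wedge\tau_k})=U_{\vec 0}(\vec X_0)+\int_0^{t\wedge\tau_k} Q_{\vec 0}(\vec X_s)\,ds+\int_0^{t\wedge\tau_k}\sum_{i=1}^n\frac{c_iX_i(s)g_i(\vec X_s)}{1+\vec c^\top\vec X(s)}\,dE_i(s).
\]
Taking the expectation $\E_\pi$ under $\PP_\pi:=\int\PPphi\,\pi(d\Bphi)$, the stopped stochastic integral has zero mean. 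Since $\tau_k\uparrow\infty$ a.s. by non-explosion, I would let $k\to\infty$ by dominated convergence: the first term is dominated by $\sup_{s\le t}U_{\vec 0}(\vec X_s)\le p_0^{-1}\sup_{s\le t}V_{\vec 0}^{p_0}(\vec X_s)$, which is $\PP_\pi$-integrable by a Burkholder--Davis--Gundy estimate of the kind used in the proof of Proposition \ref{theorem-2.4}; the drift term is dominated by $\int_0^t|Q_{\vec 0}(\vec X_s)|\,ds$, which is $\PP_\pi$-integrable with $\E_\pi\int_0^t|Q_{\vec 0}(\vec X_s)|\,ds=t\int_{\C_+}|Q_{\vec 0}|\,d\pi$ by Fubini and stationarity. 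This yields $\E_\pi U_{\vec 0}(\vec X_t)=\E_\pi U_{\vec 0}(\vec X_0)+t\int_{\C_+}Q_{\vec 0}\,d\pi$. By stationarity the two expectations are equal and finite, so $t\int_{\C_+}Q_{\vec 0}\,d\pi=0$ for every $t>0$, i.e. $\int_{\C_+}Q_{\vec 0}\,d\pi=0$.

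The only genuinely delicate point is the integrability bookkeeping: one needs $\int V_{\vec 0}^{p_0}\,d\pi<\infty$ (to make the term $\E_\pi U_{\vec 0}(\vec X_t)$ finite and stationary) and $\int\big(|Q_{\vec 0}|+\sum_i g_i^2\big)\,d\pi<\infty$ (to justify Fubini and the zero-mean property of the stochastic integral), and this is exactly where Assumption \ref{asp-2} and the a priori bounds \eqref{EV-bounded}, \eqref{t-eq7} enter. Once these are in hand, the localization and the passage $k\to\infty$ are routine.
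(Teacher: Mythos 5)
Your proof is correct, but it follows a genuinely different route from the paper's. Both arguments hinge on the same identity $\Lom U_{\vec 0}=Q_{\vec 0}$ from \eqref{est-LU} and on the same integrability inputs (\eqref{t-eq7}, \eqref{EV-bounded}, Assumption \ref{asp-2}); the difference lies in how the conclusion is extracted. The paper argues pathwise under $\PP_\pi$: it applies the strong law of large numbers to $\frac1t\int_0^t Q_{\vec 0}(\vec X_s)\,ds$ and the strong law for local martingales to the noise term, so the functional It\^o formula yields $0\le\lim_{t\to\infty}U_{\vec 0}(\vec X_t)/t=\int Q_{\vec 0}\,d\pi$ a.s., and the strictly positive case is then ruled out by a contradiction with the moment bound \eqref{EV-bounded}; this avoids any $\E\sup$ estimate, but it needs the ergodic theorem (strictly, ergodicity of $\pi$ or an ergodic-decomposition step) and only reaches the equality after the extra contradiction argument. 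You instead take expectations under the stationary law: after localization the martingale term has zero mean, dominated convergence (justified by your bounds $\int V_{\vec 0}^{p_0}\,d\pi\le\bar M_{p_0,\vec 0}$, $Q_{\vec 0}\in L^1(\pi)$, and the Burkholder--Davis--Gundy sup-estimate of the type in Proposition \ref{theorem-2.4}) gives $\E_\pi U_{\vec 0}(\vec X_t)=\E_\pi U_{\vec 0}(\vec X_0)+t\int Q_{\vec 0}\,d\pi$, and invariance removes the $t$-dependence of the left-hand side. What your route buys is a one-pass identity valid for an arbitrary invariant probability measure, with no appeal to ergodicity or to the martingale SLLN; the price is heavier integrability bookkeeping (the $\pi$-integrability of $V_{\vec 0}^{p_0}$ and of $Q_{\vec 0}$, obtained from \eqref{t-eq7} under the stationary initial law, and the $\E_\pi\sup_{s\le t}V_{\vec 0}^{p_0}(\vec X_s)<\infty$ domination), which you handle correctly. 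Note also that the paper gets the $\pi$-integrability of $Q_{\vec 0}$ via Lemma \ref{weak-converge}, whereas you derive it directly from \eqref{t-eq7} plus stationarity; both are legitimate.
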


\begin{proof}
Because of \eqref{t-eq7}, \eqref{h2-eq0}, Lemma \ref{weak-converge}, and Assumption \ref{asp-2}, $Q_{\vec 0}$ is $\pi$-integrable. By the strong law of large numbers (see, e.g., \cite[Theorem 4.2]{Kha12}) we have
\begin{equation}\label{q0-e1}
\lim_{t\to\infty}\frac 1t\int_0^t Q_{\vec 0}(\vec X_s)ds=\int_{\C_+}Q_{\vec 0}(\Vphi)\pi(d\Vphi),\quad\PP_\pi\text{-}\a.s.,
\end{equation}
and
$$
\begin{aligned}
\lim\limits_{t\to\infty}&\dfrac1t\int_0^t \dfrac{\sum_{i,j}\sigma_{ij} c_ic_jX_i(s)X_j(s)g_i(\vec X_s)g_j(\vec X_s)}{(1+\sum_ic_iX_i(s))^2}ds\\
&=\int_{\C_+}\dfrac{\sum_{i,j} \sigma_{ij}c_ic_jx_ix_jg_i(\Vphi)g_j(\Vphi)}{(1+\vec c^\top\vec x)^2}\pi(d\Vphi)<\infty\,~~~~\PP_{\pi}\text{-}\a.s, \text{ where }\vec x:=\Vphi(0).
\end{aligned}
$$
The above limit tells us that if we let
$\langle L_\cdot,L_\cdot \rangle_t$
be the quadratic variation of the local martingale
\[
L_t:= \int_0^t\dfrac{\sum_ic_iX_i(s)g_i(\vec X_s)dE_i(s)}{1+\sum_ic_iX_i(s)},
\]
then
\[
\limsup_{t\to\infty}\frac{\langle L_\cdot,L_\cdot \rangle_t}{t} = \int_{\C_+}\dfrac{\sum_{i,j} \sigma_{ij}c_ic_jx_ix_jg_i(\Vphi)g_j(\Vphi)}{(1+\vec c^\top\vec x)^2}\pi(d\Vphi)<\infty\,~~~~\PP_{\pi}\text{-a.s.}
\]
Applying the strong law of large numbers for local martingales  (see \cite[Theorem 1.3.4]{MAO}), 
\begin{equation}\label{q0-e2}
\lim\limits_{t\to\infty}\dfrac1t\int_0^t \dfrac{\sum_ic_iX_i(s)g_i(\vec X_s)dE_i(s)}{1+\sum_ic_iX_i(s)}=0\,\,\,\,\PP_{\pi}\text{-}\text{a.s.}
\end{equation}
As in \eqref{est-LU}, we have $\Lom U_{\vec 0}(\Vphi)=Q_{\vec 0}(\Vphi),$ where
\begin{equation}\label{q0-e3}
U_{\vec 0}(\Vphi)=\ln (1+\vec c^\top \vec x)+A_2\int_{-r}^0\mu(ds)\int_s^0 e^{\gamma(u-s)}h\big(\Vphi(u)\big)du,\vec x:=\Vphi(0).
\end{equation}
Combining \eqref{q0-e1}, \eqref{q0-e2}, \eqref{q0-e3}, and the functional It\^o formula yields that
\begin{equation}\label{q0-e4}
0\leq \lim\limits_{t\to\infty}\dfrac{U_{\vec 0}(\vec X_t)}t=\int_{\C_+}Q_{\vec 0} (\Vphi)\pi(d\Vphi)\,\,\,\,\PP_{\pi}\text{-}\text{a.s.}
\end{equation}
A simple contradiction argument coupled with \eqref{q0-e4} and \eqref{EV-bounded} leads to that
\begin{equation*}
\int_{\C_+} Q_{\vec 0}(\Vphi)\pi(d\Vphi)=0.
\end{equation*}
\end{proof}

\begin{lm}\label{p-l1}
Assume  Assumptions {\rm\ref{asp1}}, {\rm\ref{asp-2}}, and {\rm\ref{asp-lambda>0}} hold.
Let $\Vrho^*$ be as in \eqref{p-eq1}.
For any compact set $\K$ of $\C_+$, there exists a $T_\K>r$ such that for any $T\geq T_\K$ and $\Bphi\in\partial\C_+\cap\K$, we have
\begin{equation}\label{p-l1-eq0}
\int_r^T \Ephi Q_{\Vrho^*}(\vec X_t)dt\leq -\kappa^*T,
\end{equation}
where
$$Q_{\Vrho^*}(\Vphi):=Q_{\vec 0}(\Vphi)-\sum_{i=1}^n\rho^*_i\left(f_i(\Vphi)-\frac {\sigma_{ii}g_i^2(\Vphi)}2\right).$$
\end{lm}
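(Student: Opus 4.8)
The plan is to obtain \eqref{p-l1-eq0} by combining three facts established earlier: the identity $\int_{\C_+}Q_{\vec 0}\,d\pi=0$ for every invariant probability measure $\pi$ (Lemma \ref{lem-q0}); the spectral estimate \eqref{p-eq1} on $\M$; and the weak convergence of occupation measures to invariant measures (Lemma \ref{weak-converge}). Uniformity over the compact set $\K$ is not available directly from weak convergence (which holds only along $\Bphi$-dependent subsequences), so it will be forced by a contradiction argument. Two structural remarks are used: $\partial\C_+$ is closed in $\C$; and if $\Bphi\in\partial\C_+$, say $\varphi_i\equiv0$, then since the $i$-th equation of \eqref{Kol-Eq} reads $dX_i=X_if_i\,dt+X_ig_i\,dE_i$, uniqueness gives $X_i(\cdot)\equiv0$, so $\vec X_t^{\Bphi}\in\partial\C_+$ for all $t\ge0$ and the occupation measure $\Pi_t^{\Bphi}$ is concentrated on $\partial\C_+$. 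Finally we work under Assumption \ref{asp-2}(a) (legitimate by the remark after Lemma \ref{lem-integrable}), which bounds $\sum_i(\abs{f_i}+g_i^2)$ by $\wdt K\big(h(\vec x)+\int_{-r}^0 h(\Vphi(s))\mu(ds)\big)$; hence $Q_{\Vrho^*}=Q_{\vec 0}-\sum_i\rho^*_i\big(f_i-\tfrac{\sigma_{ii}g_i^2}2\big)$ is $\pi$-integrable for every invariant $\pi$ and \eqref{convergence-G} of Lemma \ref{weak-converge} applies to $G=Q_{\Vrho^*}$ (the double memory-integral term of $Q_{\vec 0}$ being handled via stationarity exactly as in the proof of Lemma \ref{lem-q0}).

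Suppose \eqref{p-l1-eq0} fails for some compact $\K\subset\C_+$; in particular $\partial\C_+\cap\K\neq\emptyset$. Then there are $T_k>r$, $T_k\to\infty$, and $\Bphi_k\in\partial\C_+\cap\K$ with
\[
\int Q_{\Vrho^*}\,d\Pi_{T_k}^{\Bphi_k}=\frac1{T_k}\int_r^{T_k}\E_{\Bphi_k}Q_{\Vrho^*}(\vec X_t)\,dt>-\kappa^*.
\]
Because $V_{\vec 0}$ is continuous and $\K$ compact, $R:=1+\sup_{\Vphi\in\K}V_{\vec 0}(\Vphi)<\infty$ and $V_{\vec 0}(\Bphi_k)<R$ for all $k$, so by \eqref{t-eq4} (and the remark following it) the family $\{\Pi_{T_k}^{\Bphi_k}\}_k$ is tight; since its total mass tends to $1$, a subsequence, still indexed by $k$, converges weakly to a probability measure $\pi$. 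By Lemma \ref{weak-converge}, $\pi$ is invariant, and since each $\Pi_{T_k}^{\Bphi_k}$ is concentrated on the closed set $\partial\C_+$, so is $\pi$.

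Now one evaluates $\int_{\C_+}Q_{\Vrho^*}\,d\pi$. By Lemma \ref{lem-q0}, $\int_{\C_+}Q_{\vec 0}\,d\pi=0$. Decomposing $\pi$ into its ergodic components --- all of which lie in $\M$ because $\partial\C_+$ is invariant and closed --- and using that $\pi\mapsto\sum_i\rho^*_i\lambda_i(\pi)$ is affine together with \eqref{p-eq1}, one gets $\sum_i\rho^*_i\lambda_i(\pi)\ge2\kappa^*$, hence
\[
\int_{\C_+}Q_{\Vrho^*}\,d\pi=\int_{\C_+}Q_{\vec 0}\,d\pi-\sum_{i=1}^n\rho^*_i\lambda_i(\pi)\le-2\kappa^*.
\]
On the other hand, \eqref{convergence-G} applied to $G=Q_{\Vrho^*}$ gives $\int Q_{\Vrho^*}\,d\Pi_{T_k}^{\Bphi_k}\to\int_{\C_+}Q_{\Vrho^*}\,d\pi\le-2\kappa^*$, contradicting $\int Q_{\Vrho^*}\,d\Pi_{T_k}^{\Bphi_k}>-\kappa^*$ since $\kappa^*>0$. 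Thus for every compact $\K$ there is $T_\K>r$ with $\int_r^T\E_{\Bphi}Q_{\Vrho^*}(\vec X_t)\,dt\le-\kappa^*T$ whenever $T\ge T_\K$ and $\Bphi\in\partial\C_+\cap\K$, which is \eqref{p-l1-eq0}. The delicate step --- and the reason Assumption \ref{asp-2} (via the $\pi$-uniform integrability in Lemma \ref{lem-integrable}) is needed --- is the limit passage for the unbounded integrand $Q_{\Vrho^*}$ along the occupation measures; a secondary but essential point is the ergodic-decomposition argument transferring \eqref{p-eq1} from $\M$ to the limit $\pi$, which itself rests on the observation that solutions started on $\partial\C_+$ stay there.
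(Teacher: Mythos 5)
Your proposal is correct and follows essentially the same route as the paper's proof: a contradiction argument via occupation measures $\Pi_{T_k}^{\Bphi_k}$, tightness from the compactness of $\K$ and \eqref{t-eq4}, invariance and boundary support of the weak limit $\pi$, then Lemma \ref{lem-q0} plus \eqref{p-eq1} to get $\int Q_{\Vrho^*}\,d\pi\le-2\kappa^*$, contradicting the assumed lower bound $-\kappa^*$. You are in fact somewhat more explicit than the paper on the points it leaves implicit (invariance of $\partial\C_+$ under the flow, the ergodic decomposition transferring \eqref{p-eq1} from $\M$ to $\conv(\M)$, and the integrability of the memory term in $Q_{\Vrho^*}$), so no changes are needed.
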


\begin{proof}
We prove the lemma by using a contradiction argument. Suppose that we can find $\Bphi_k\in \partial\C_+\cap\K$ and $T_k>r$, $T_k\uparrow\infty$ such that
\begin{equation}\label{p-l1-eq0-8-8}
\int_r^{T_k} \Ephi Q_{\Vrho^*}(\vec X_t)dt\geq -\kappa^*T_k.
\end{equation}
Since $\Ephi |Q_{\Vrho^*}(\vec X_t)|\leq H_1\wdt K(t+V_{\vec 0}^{p_0}(\Bphi))$ due to \eqref{t-eq7}, \eqref{h2-eq0}, and Assumption \ref{asp-2}, we can apply
 Tonelli's theorem to obtain 
\begin{equation*}
\int_{\C_+} Q_{\Vrho^*}(\Vphi)\Pi_{T_k}^{\Bphi_k}(d\Vphi)=\frac 1{T_k}\int_r^{T_k} \E_{\Bphi_k}Q_{\Vrho^*}(\vec X_t)dt.
\end{equation*}
Under Assumption \ref{asp-2}, as a consequence of Lemma \ref{weak-converge}, 
\begin{equation}\label{p-l1-eq1}
\lim_{k\to\infty}\frac 1{T_k}\int_r^{T_k}\E_{\Bphi_k}Q_{\Vrho^*}(\vec X_t)dt=\int_{\C_+} Q_{\Vrho^*}(\Vphi)\pi(d\Vphi),
\end{equation}
where the invariant measure $\pi$ is the weak limit of $\{\Pi_{T_k}^{\Bphi_k}\}$. Since the initial values lie on the boundary, $\pi$ is supported in $\partial \C_+$. This combined with Lemma \ref{lem-q0} implies that
\begin{equation}\label{p-l1-eq2}
\int_{\C_+} Q_{\Vrho^*}(\Vphi)\pi(d\Vphi)=-\sum_{i=1}^n \rho^*_i \lambda_i(\pi).
\end{equation}
Thus, we obtain from \eqref{p-eq1}, \eqref{p-l1-eq1}, and \eqref{p-l1-eq2} that
\begin{equation}\label{p-l1-eq3}
\lim_{k\to\infty}\frac 1{T_k}\int_r^{T_k}\E_{\Bphi_k}Q_{\Vrho^*}(\vec X_t)dt\leq -2\kappa^*.
\end{equation}
Combining \eqref{p-l1-eq0-8-8} and \eqref{p-l1-eq3} leads to a contradiction. As a result, the Lemma is proved.
\end{proof}

Now, let $n^*$ be sufficiently large to satisfy
\begin{equation}\label{e:n*}
\gamma_0 (n^*-1)- A_0>0,
\end{equation}
and $p_1>p_0$ but %to still satisfy 
\eqref{cd-p0} still holds.
Under Assumption \ref{asp-2}, a consequence of \eqref{t-eq7} is that there is an $H_1^*$ satisfying
\begin{equation}\label{H1*}
\int_r^T \Ephi Q_{\Vrho^*}(\vec X_t)dt\leq H_1^*(T+V_{\vec 0}^{p_0}(\Bphi))\text{ for all }T\geq r.
\end{equation}
Because of \eqref{EV-bounded}, we have
\begin{equation}\label{EV-bounded-l2}
\Ephi V_{\vec 0}^{p_1}(\vec X_t)\leq V_{\vec 0}^{p_1}(\Bphi)e^{-\gamma_0p_1t}+\bar M_{p_1,\vec 0}.
\end{equation}
Note that
\begin{equation}\label{add-CVM}
\text{if } \Bphi\in \C_{V,M} \text{ then }V_{\vec 0}(\Bphi)\leq (1+M\abs{\vec c})e^{A_0},
\end{equation}
where $\C_{V,M}$ is defined as in Theorem \ref{existence}.
Equations
\eqref{EV-bounded-l2} and \eqref{add-CVM} imply that
\begin{equation}\label{p-l2-eq10}
\Ephi V_{\vec 0}^{p_1}(\vec X_t)\leq R_{V,M} \text{ if }\Bphi\in\C_{V,M}, t\geq 0
\end{equation}
for some $R_{V,M}>0$.
%Moreover, since \eqref{EV-bounded-l2} we can let $R_{V,M}$ be sufficiently large so that
%\begin{equation}\label{RVM-11}
%\Ephi V_{\vec 0}^{p_1}(\vec X_{2r})\leq R_{V,M} \text{ if }V_{\vec 0}^{p_1}(\Bphi)<R_{V,M}.
%\end{equation}
Let $\eps^*\in(0,\frac1{9})$ be such that
\begin{equation}\label{p-l2-eq8}
R_{V,M}^{\frac{p_0}{p_1}}\left(\eps^*\right)^{\frac{p_1-p_0}{p_1}} +\eps^* H_1^* T\leq \frac{\kappa^*}{10}(T-2r)\text{ for any }T\geq 3r.
\end{equation}
To take care of the case when $X_i(t)$ is small but the norm of the segment function $\norm{X_{t}}$ is not, we 
%have
derive the following Lemma.

\begin{lm}\label{p-l2}
Assume  Assumptions {\rm\ref{asp1}}, {\rm\ref{asp-2}}, and {\rm\ref{asp-lambda>0}} hold.
There exist a $\hat\delta>0$ and $T^*>0$ such that for any $T\in[T^*, n^*T^*]$,
\begin{equation}\label{p-p1-eq0}
\Ephi\int_{0}^{T}Q_{\Vrho^*}(\vec X_t)dt\leq -\frac12\kappa^* T, \text { if }\Bphi\in \C_V(\hat\delta),
\end{equation}
where  $$\C_V(\hat\delta):=\left\{\Bphi\in\C^{\circ}_+\cap\C_{V,M} \text{ such that }|\phi_i(0)|\leq\hat\delta \text{ for some }i\right\}.$$
\end{lm}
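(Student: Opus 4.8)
\emph{Overview.} The plan is to run the system for the fixed horizon $2r$ so that the initial data $\Bphi$ is flushed out of the current segment, to reduce --- with probability close to $1$ --- to a near-boundary initial condition lying in a fixed compact subset of $\C$, and then to transfer the strictly negative drift estimate of Lemma \ref{p-l1} by a continuity-in-initial-condition argument. The hypothesis $\Bphi\in\C_{V,M}$ (rather than a bound on $\norm{\Bphi}$, which is not available here) is exactly what is needed: by \eqref{add-CVM} it gives $V_{\vec 0}(\Bphi)\le(1+M\abs{\vec c})e^{A_0}$, which makes every Lyapunov-type estimate of Section \ref{sec:key} --- \eqref{t-eq6}, \eqref{h2-eq1}, \eqref{t-eq7}, \eqref{H1*}, \eqref{p-l2-eq10} --- uniform over $\Bphi\in\C_V(\hat\delta)$.

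\emph{Step 1 (the small coordinate stays small after time $r$).} Fix $i$ with $\abs{\phi_i(0)}\le\hat\delta$. Solving the $i$-th equation of \eqref{Kol-Eq},
\[
X_i(u)=\phi_i(0)\exp\!\Big(\int_0^u\big(f_i(\vec X_s)-\tfrac{\sigma_{ii}}{2}g_i^2(\vec X_s)\big)ds+\int_0^u g_i(\vec X_s)\,dE_i(s)\Big),
\]
so $\sup_{u\in[r,2r]}X_i(u)\le\hat\delta\exp\!\big(\int_0^{2r}(\abs{f_i(\vec X_s)}+\tfrac{\sigma_{ii}}{2}g_i^2(\vec X_s))ds+\sup_{u\le2r}\abs{\int_0^u g_i(\vec X_s)dE_i(s)}\big)$. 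By \eqref{h2-eq1} with $T=2r$, \eqref{add-CVM}, and the Burkholder--Davis--Gundy inequality, $\Ephi\int_0^{2r}\sum_j(\abs{f_j(\vec X_s)}+g_j^2(\vec X_s))ds$ and $\Ephi\sup_{u\le2r}\big|\int_0^u g_i(\vec X_s)dE_i(s)\big|^2$ are bounded by constants depending only on the data. A Markov inequality then yields a constant $C_3$ such that $\sup_{u\in[r,2r]}X_i(u)\le C_3\hat\delta$ off a set of $\PPphi$-probability at most $\eps^*/2$. Combining with the second part of Lemma \ref{Holder} (tolerance $\eps^*/2$) gives a compact set $\K:=\{\Vphi:\norm{\Vphi}_{2\alpha}\le R_4\}\subset\C$ and an event $\mathcal G$ with $\PPphi(\mathcal G^c)\le\eps^*$ on which $\vec X_{2r}\in\K$ and $\norm{X_{i,2r}}\le C_3\hat\delta$. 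On $\mathcal G$ the point $\Bphi^\circ$ obtained from $\vec X_{2r}$ by zeroing its $i$-th coordinate lies in the compact set $\K^\partial:=\{\Vphi\in\partial\C_+:\norm{\Vphi}_{2\alpha}\le R_4\}$, satisfies $\norm{\vec X_{2r}-\Bphi^\circ}\le C_3\hat\delta$, and $V_{\vec 0}(\vec X_{2r})\vee V_{\vec 0}(\Bphi^\circ)\le R$ for some $R$ depending only on $R_4$.

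\emph{Steps 2--3 (neighbourhood version of Lemma \ref{p-l1}, Markov step, and assembly).} Apply Lemma \ref{p-l1} to $\K^\partial$: there is $T_{\K^\partial}>r$ with $\int_r^{T'}\E_{\Bphi^\circ}Q_{\Vrho^*}(\vec X_t)dt\le-\kappa^*T'$ for all $T'\ge T_{\K^\partial}$ and $\Bphi^\circ\in\partial\C_+\cap\K^\partial$. Set $T^*:=T_{\K^\partial}+2r$, fix $T\in[T^*,n^*T^*]$, and put $T'=T-2r$. For each $t$ in the bounded range $[r,T']$, Proposition \ref{theorem-2.4}(ii) gives closeness in $\C$ of the segments $\vec X_t^\psi$ and $\vec X_t^{\Bphi^\circ}$ when $\norm{\psi-\Bphi^\circ}$ is small and $V_{\vec 0}(\psi)\vee V_{\vec 0}(\Bphi^\circ)<R$; since $Q_{\Vrho^*}$ is continuous and, by \eqref{t-eq7}, \eqref{h2-eq0} and Assumption \ref{asp-2}, $Q_{\Vrho^*}(\vec X_t)$ is uniformly integrable, this upgrades to $\int_r^{T'}\abs{\E_\psi Q_{\Vrho^*}(\vec X_t)-\E_{\Bphi^\circ}Q_{\Vrho^*}(\vec X_t)}dt\le\tfrac{\kappa^*}{10}T'$, provided $\hat\delta$ is chosen small enough (depending on $T^*$) that $C_3\hat\delta$ lies below the relevant thresholds for all $T'\le n^*T^*-2r$. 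Adding the short piece, for which $\E_\psi\int_0^r\abs{Q_{\Vrho^*}(\vec X_t)}dt\le C'$ for $\psi\in\K$ by the Lyapunov bounds, the Markov property at time $2r$ gives, on $\mathcal G$, $\E_{\vec X_{2r}}\int_0^{T'}Q_{\Vrho^*}(\vec X_t)dt\le C'-\tfrac{9\kappa^*}{10}T'$. Now decompose $\Ephi\int_0^{T}Q_{\Vrho^*}(\vec X_t)dt=\Ephi\int_0^{2r}Q_{\Vrho^*}(\vec X_t)dt+\Ephi[\1_{\mathcal G}\E_{\vec X_{2r}}\int_0^{T'}Q_{\Vrho^*}(\vec X_t)dt]+\Ephi[\1_{\mathcal G^c}\E_{\vec X_{2r}}\int_0^{T'}Q_{\Vrho^*}(\vec X_t)dt]$. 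The first term is $\le C_0$ on $\C_{V,M}$ because $Q_{\Vrho^*}^+(\Vphi)\le A_2 h(\vec x)\int_{-r}^0 e^{-\gamma s}\mu(ds)+C\sum_i(\abs{f_i(\Vphi)}+g_i^2(\Vphi))$ and \eqref{t-eq6}, \eqref{h2-eq1} apply with $T=2r$; the second is $\le C'-\tfrac{9\kappa^*}{10}T'(1-\eps^*)$ (split $C'\PPphi(\mathcal G)-\tfrac{9\kappa^*}{10}T'\PPphi(\mathcal G)$ and use $\PPphi(\mathcal G)\ge1-\eps^*$); and for the third, the Markov property, a bound $\E_\psi\int_0^{T'}\abs{Q_{\Vrho^*}(\vec X_t)}dt\le H_1^*(T'+V_{\vec 0}^{p_0}(\psi))$ from \eqref{H1*}, H\"older's inequality and \eqref{p-l2-eq10} give $\Ephi[\1_{\mathcal G^c}\E_{\vec X_{2r}}\int_0^{T'}\abs{Q_{\Vrho^*}(\vec X_t)}dt]\le H_1^*(R_{V,M}^{p_0/p_1}(\eps^*)^{(p_1-p_0)/p_1}+\eps^*T)\le\tfrac{\kappa^*}{10}(T-2r)$ by \eqref{p-l2-eq8}. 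Summing and using $\eps^*<\tfrac19$ gives $\Ephi\int_0^T Q_{\Vrho^*}(\vec X_t)dt\le C_0+C'-\tfrac{7}{10}\kappa^*(T-2r)$; since $C_0,C',T_{\K^\partial}$ do not depend on $\hat\delta$, enlarging $T^*$ (then shrinking $\hat\delta$ accordingly) makes the right-hand side $\le-\tfrac12\kappa^*T$ throughout $[T^*,n^*T^*]$.

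\emph{Main obstacle.} The crux is reconciling two facts in Steps 1--2: $\Bphi\in\C_V(\hat\delta)$ can be arbitrarily far from $\partial\C_+$ in $\norm{\cdot}$ (its history being large), so no continuity argument against boundary measures applies directly; yet after the flushing time $2r$ the segment $\vec X_{2r}$ must, with high probability, be genuinely close in $\C$ to the compact boundary piece $\K^\partial$, and close enough --- \emph{uniformly} over the bounded range $T'\le n^*T^*-2r$ --- to carry over the strict negativity of Lemma \ref{p-l1}. Getting the order of parameters right ($\K$, $\eps^*$, $C_3$ first; then $T^*$ from Lemma \ref{p-l1} and the Lyapunov constants; then $\hat\delta$ from $T^*$) is what closes the estimate.
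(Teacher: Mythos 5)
Your proposal is correct and follows essentially the same strategy as the paper's proof: flush the history over $[0,2r]$, show $\vec X_{2r}$ lands with probability at least $1-\eps^*$ in a compact set close to $\partial\C_+$, transfer the strict negativity of Lemma \ref{p-l1} by the Feller/continuity property plus uniform integrability of $Q_{\Vrho^*}$, control the bad event via \eqref{H1*}, H\"older, \eqref{p-l2-eq10} and \eqref{p-l2-eq8}, and absorb the initial time piece by enlarging $T^*$. The only differences are cosmetic: you re-derive the ``small coordinate stays small'' step from the explicit exponential solution plus Markov's inequality instead of invoking Proposition \ref{theorem-2.4}(i), and you bound the initial piece by a crude constant $C_0+C'$ rather than the paper's sharper Jensen-type bound $3A_0r$ in \eqref{p-p1-eq0''}.
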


\begin{proof}
For any event $\A$ with $\PPphi(\A)\geq 1-\eps$, we obtain from \eqref{H1*}, the H\"older inequality, and \eqref{p-l2-eq10} that
\begin{equation}\label{p-l2-eq5}
\begin{aligned}
\Ephi\1_{\A^c}\int_{3r}^{T}Q_{\Vrho^*}(\vec X_t)dt\leq& \E\1_{\A^c}H_1^*\left(T+V_{\vec 0}^{p_0}(\vec X_{2r})\right)
\\ \leq& \left(\eps^*\right)^{\frac{p_1-p_0}{p_1}}\left(\Ephi V_{\vec 0}^{p_1}(\vec X_{2r})\right)^{\frac{p_0}{p_1}}+\eps^* H_1^* T\\\leq& R_{V,M}^{\frac{p_0}{p_1}}\left(\eps^*\right)^{\frac{p_1-p_0}{p_1}} +\eps^* H_1^* T\text{ if }\Bphi\in\C_{V,M},
%\leq& \frac{\kappa^*}8(T-r),
\end{aligned}
\end{equation}
where $\A^c=\Omega\setminus\A$.
Applying Lemma \ref{Holder} implies that there is a compact set $\K^*=\K^*(\eps^*):=\{\Vphi\in\C_+:\norm{\Vphi}\leq R, \norm{\Vphi}_{2\alpha}-\norm{\Vphi}\leq R_4\}$ ($R_4=R_4(\eps^*)$ is as in Lemma \ref{Holder}) such that
\begin{equation}\label{p-l2-eq1}
\PPphi\left\{\vec X_t\in\K^*\text{ for all }t\in[2r,3r]\right\}\geq 1-\frac{\eps^*}2\text{ if }\Bphi\in\C_{V,M}.
\end{equation}
%Since $\Bphi\in\partial \C_+$, $\K^*\subset\partial\C_+$.
In view of Lemma \ref{p-l1},
there exists $T^*=T^*(\K^*)>0$ such that for all $T\geq T^*-3r$,
\begin{equation}\label{5-16-1111}
\int_r^T \Ephi Q_{\Vrho^*}(\vec X_t)dt\leq -\kappa^*T \text{ if }\Bphi\in\partial \C_+\cap\K^*.
\end{equation}
%due to Lemma \ref{p-l1}.
Without loss of generality, we can choose $T^*>3r$
%be
sufficiently large such that for all $T\geq T^*$,
\begin{equation}\label{p-l2-eq9}
-\frac {7}{10}\kappa^*(T-2r)
%+H_1(3r+R_{V,M})\leq -\frac6{10}\kappa^*T
%\text{ and }
%-\frac 6{10}\kappa^*T+
+3A_0r\leq -\frac12\kappa^* T.
\end{equation}
In view of the Feller property of $\vec X_t$ and \eqref{5-16-1111}, we
%can
obtain that there is a $\delta_1>0$ such that for all $T\in [T^*-3r, n^*T^*]$
\begin{equation}\label{p-l2-eq3}
\int_r^T \Ephi Q_{\Vrho^*}(\vec X_t)dt\leq -\dfrac{9}{10}\kappa^*T\text{ if } \Bphi\in\K^*, \text{dist}(\Bphi,\partial \C_+)<\delta_1.%, T\in [T^*+r,n^*T^*+r].
\end{equation}
By virtue of \eqref{p-l2-eq1}, part (i) of Proposition \ref{theorem-2.4}, and the structure of $\K^*$, there exists a $\hat\delta>0$ such that
\begin{equation}\label{p-l2-eq4}
\PPphi(\A)\geq 1-\eps^*\text{ if } \Bphi\in \C_V(\hat\delta),
\end{equation}
where
$$\A=\left\{\text{dist}(\vec X_{2r},\partial \C_+)<\delta_1,\vec X_{2r}\in\K^*\right\}.$$
Combining \eqref{p-l2-eq3} and \eqref{p-l2-eq4} leads to that
for all $T\in[T^*,n^*T^*]$
\begin{equation}\label{p-l2-eq6}
\Ephi\1_\A\int_{3r}^{T}Q_{\Vrho^*}(\vec X_t)dt\leq -\dfrac{9}{10}\kappa^*(T-2r)(1-\eps^*)\leq -\dfrac{8}{10}\kappa^*(T-2r)\text{ if } \Bphi\in \C_V(\hat\delta) .
\end{equation}
We obtain from \eqref{p-l2-eq5}, \eqref{p-l2-eq8}, and \eqref{p-l2-eq6} that for all $T\in[T^*,n^*T^*]$
\begin{equation}\label{p-l2-eq7}
\Ephi\int_{3r}^{T}Q_{\Vrho^*}(\vec X_t)dt\leq-\dfrac{7}{10}\kappa^*(T-2r), \Bphi\in \C_V(\hat\delta).
\end{equation}
Using the functional It\^o formula, Jensen's inequality, and \eqref{EV-bounded-ne}, we have 
\begin{equation}\label{p-p1-eq0''}
\begin{aligned}
\int_0^{3r}\Ephi Q_{\Vrho^*}(\vec X_s)ds=&\frac 1{p_0}\Ephi \left(\ln V_{-\Vrho^*}^{p_0}(\vec X_{3r})-\ln  V_{-\Vrho^*}^{p_0}(\Bphi)\right)\\
\leq&\frac 1{p_0}\ln \frac{\Ephi V_{-\Vrho^*}^{p_0}(\vec X_{3r})}{V_{-\Vrho^*}^{p_0}(\Bphi)}
\leq \frac {\ln e^{3A_0p_0r}}{p_0}=3A_0r.
\end{aligned}
\end{equation}
%{\color{red}Can we obtain \eqref{p-p1-eq0''} for $\int_0^{3r}$ then combining with \eqref{p-p1-eq7} so that we don't need \eqref{p-p1-eq0'}?}
Therefore, we obtain from \eqref{p-l2-eq7}, \eqref{p-p1-eq0''}, and \eqref{p-l2-eq9} that $\text { if }\Bphi\in \C_V(\hat\delta),$
\begin{equation*}%\label{p-p1-eq0}
\Ephi\int_{0}^{T}Q_{\Vrho^*}(\vec X_t)dt\leq -\dfrac{7}{10}\kappa^*(T-2r)+3A_0r\leq -\frac12\kappa^* T,\text{ for all }T\in[T^*,n^*T^*].
\end{equation*}
 The lemma is proved.
\end{proof}

\begin{prop}\label{p-prop1}
Assume that Assumptions {\rm\ref{asp1}}, {\rm\ref{asp-2}}, and {\rm\ref{asp-lambda>0}} hold.
Then there are $\theta\in\left(0,\frac{p_0}2\right)$ and $\wdt K_\theta>0$ such that for any $T\in[T^*,n^*T^*]$ and $\Bphi\in\C^{\circ}_+\cap\C_{V,M}$,
\begin{equation}
\Ephi V_{-\Vrho^*}^{\theta}(\vec X_T)\leq V_{-\Vrho^*}^{\theta}(\Bphi)\exp\Big(-\frac 14\theta\kappa^*T\Big)+\wdt K_\theta,
\end{equation}
where $-\Vrho^*=\left(-\rho^*_1,\dots,-\rho^*_n\right)$.
\end{prop}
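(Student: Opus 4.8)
The plan is to derive the estimate as a consequence of the functional It\^o formula applied to $\ln V_{-\Vrho^*}^{\theta}(\vec X_t)$, together with the key integral bound on $Q_{\Vrho^*}$ obtained in Lemma \ref{p-l2}, controlling the stochastic integral via the log-Laplace transform machinery of Lemma \ref{log-Lap}. First I would observe that, by the functional It\^o formula applied to $U_{-\Vrho^*}(\Vphi)=\ln V_{-\Vrho^*}(\Vphi)$ and the computation \eqref{est-LU} (with $\Vrho$ replaced by $-\Vrho^*$), one has
\begin{equation*}
\ln V_{-\Vrho^*}^{\theta}(\vec X_T)=\ln V_{-\Vrho^*}^{\theta}(\Bphi)+\theta\int_0^T Q_{\Vrho^*}(\vec X_s)\,ds+\theta M_T,
\end{equation*}
where $M_T$ is a local martingale of the form $\int_0^T\sum_i\bigl(\tfrac{c_iX_i(s)}{1+\sum_{i'}c_{i'}X_{i'}(s)}-\rho^*_i\bigr)g_i(\vec X_s)\,dE_i(s)$, whose quadratic variation is bounded by $C\int_0^T\sum_i g_i^2(\vec X_s)\,ds$ for a constant $C$ depending only on $n$, $\sigma^*$, and $|\Vrho^*|$. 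Exponentiating and taking expectations,
\begin{equation*}
\Ephi V_{-\Vrho^*}^{\theta}(\vec X_T)=V_{-\Vrho^*}^{\theta}(\Bphi)\,\Ephi\exp\Bigl(\theta\int_0^T Q_{\Vrho^*}(\vec X_s)\,ds+\theta M_T\Bigr).
\end{equation*}

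The next step is to split the exponent and apply H\"older's inequality in the form $\E e^{a+b}\le (\E e^{2a})^{1/2}(\E e^{2b})^{1/2}$, separating the drift contribution $\theta\int_0^T Q_{\Vrho^*}$ from the martingale contribution $\theta M_T$. For the martingale part I would invoke Lemma \ref{log-Lap}: because Assumption \ref{asp1}(3) (through the $-\tfrac{\gamma_b}{2}\sum_i(|f_i|+g_i^2)$ term in \eqref{est-LV}) and the bound \eqref{EV-bounded-ne} provide exponential integrability of $\int_0^T\sum_i g_i^2(\vec X_s)\,ds$ uniformly for $\Bphi\in\C_{V,M}$ and $T\in[T^*,n^*T^*]$, the exponential moment $\Ephi\exp(2\theta M_T)$ is bounded by a constant uniform in such $\Bphi,T$, provided $\theta$ is chosen small enough. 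For the drift part, Lemma \ref{p-l2} gives $\Ephi\int_0^T Q_{\Vrho^*}(\vec X_t)\,dt\le-\tfrac12\kappa^*T$ when $\Bphi\in\C_V(\hat\delta)$; to upgrade this expectation bound into an exponential-moment bound $\Ephi\exp(2\theta\int_0^T Q_{\Vrho^*})\le\exp(-\tfrac34\theta\kappa^*T)$ I would again use Lemma \ref{log-Lap} on $Y:=\int_0^T Q_{\Vrho^*}(\vec X_s)\,ds$, writing $\ln\Ephi e^{2\theta Y}=2\theta\,\Ephi Y+O(\theta^2)$ and absorbing the $O(\theta^2)$ term into the exponent for $\theta$ sufficiently small — this requires exponential integrability of $Q_{\Vrho^*}$, which follows from Assumption \ref{asp-2}, the bounds \eqref{t-eq7}, \eqref{h2-eq0}, and the boundedness of $\int_0^T\sum_i(|f_i|+g_i^2)$ under the exponential moments controlled by $V_{-\Vrho^*}$.

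The remaining case $\Bphi\in\C_+^\circ\cap\C_{V,M}$ with $\min_i\phi_i(0)>\hat\delta$ (i.e.\ $\Bphi\notin\C_V(\hat\delta)$) must be handled separately: here one only needs a crude bound, since $V_{-\Vrho^*}^{\theta}(\Bphi)$ is bounded below away from zero (all coordinates are $\ge\hat\delta$ and $\Bphi\in\C_{V,M}$), while $\Ephi V_{-\Vrho^*}^{\theta}(\vec X_T)$ is bounded above by a uniform constant via \eqref{EV-bounded-ne} and an application of H\"older's inequality to reduce $\theta<p_0/2$ to $p_0$; absorbing this uniform bound into $\wdt K_\theta$ finishes that case. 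Combining the two cases and choosing $\wdt K_\theta$ large enough yields the claimed inequality. The main obstacle I anticipate is the careful choice of $\theta\in(0,p_0/2)$: it must be simultaneously small enough that (i) the martingale exponential moment is controlled, (ii) the $O(\theta^2)$ correction in the log-Laplace expansion of the drift term is dominated by $\tfrac14\theta\kappa^*$ (so the rate degrades only from $\tfrac12$ to $\tfrac14$), and (iii) all the exponential integrability bounds invoked above hold with constants uniform over the compact parameter range $T\in[T^*,n^*T^*]$ and over $\Bphi\in\C_{V,M}$; tracking these uniformities and verifying that a single $\theta$ works for all of them is the delicate bookkeeping at the heart of the argument.
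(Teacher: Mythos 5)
Your skeleton matches the paper's (functional It\^o formula for $\ln V_{-\Vrho^*}$, Lemma \ref{p-l2} for the mean drift, Lemma \ref{log-Lap} plus a Taylor expansion to upgrade the mean bound to an exponential-moment bound, and a crude bound via \eqref{EV-bounded-ne} when $\Bphi\in\C_+^\circ\cap\C_{V,M}\setminus\C_V(\hat\delta)$), but the step where you deviate is exactly where a genuine gap appears. You split $z(T)=\int_0^T Q_{\Vrho^*}(\vec X_s)\,ds+M_T$ by H\"older and then need, separately, (i) exponential moments of $M_T$ (hence of $\int_0^T\sum_i g_i^2(\vec X_s)\,ds$) and (ii) \emph{two-sided} exponential moments of $\int_0^T Q_{\Vrho^*}(\vec X_s)\,ds$ in order to invoke Lemma \ref{log-Lap}. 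You justify these by citing \eqref{t-eq7}, \eqref{h2-eq0} and \eqref{EV-bounded-ne}, but \eqref{t-eq7} and \eqref{h2-eq0} are only first-moment (in fact $L^1$-in-time) estimates and \eqref{EV-bounded-ne} bounds $\Ephi V^{p_0}$ at a fixed time; none of them yields exponential integrability of time-integrals of $\sum_i g_i^2$ or of $Q_{\Vrho^*}$. The lower tail is the worst spot: $-\int_0^T Q_{\Vrho^*}$ picks up the positive term $A_2h(\vec x)\int e^{-\gamma s}\mu(ds)$, so $\E\exp(-\theta_0\int_0^T Q_{\Vrho^*})$ requires exponential moments of $\int_0^T h(\vec X(s))\,ds$, which are never established in the paper and would need a separate exponential-supermartingale/Lyapunov argument (and uniformity over $\Bphi\in\C_V(\hat\delta)$, $T\in[T^*,n^*T^*]$). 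In addition, for your H\"older split to deliver $\exp(-\frac14\theta\kappa^*T)$ you cannot get away with "$\Ephi\exp(2\theta M_T)$ bounded by a constant": since the good factor is only $\exp(-c\,\theta\kappa^*T)$ with $\theta$ small, the martingale factor must be $\exp(O(\theta^2))$, i.e.\ you must track the $\theta^2$-dependence, not merely boundedness.

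The paper's proof avoids all of this by \emph{not} separating drift and martingale: it observes that $e^{\pm p_0 z(T)}$ is literally a ratio of Lyapunov functions, $e^{p_0z(T)}=V_{-\Vrho^*}^{p_0}(\vec X_T)/V_{-\Vrho^*}^{p_0}(\Bphi)$ and $e^{-p_0z(T)}\leq V_{\Vrho^*}^{p_0}(\vec X_T)/V_{\Vrho^*}^{p_0}(\Bphi)$ up to a factor at most one (see \eqref{p-p1-eq2}--\eqref{p-p1-eq5}), so the two-sided exponential moments needed for Lemma \ref{log-Lap} follow immediately and uniformly from \eqref{EV-bounded-ne}, and the lemma is applied to the combined $z(T)$ with $\Ephi z(T)\leq-\frac12\kappa^*T$ from Lemma \ref{p-l2}. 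If you want to keep your decomposition, you must supply the missing exponential-moment estimates (e.g.\ via $\Lom W\leq CW-\lambda\Psi W$ and the process $e^{\epsilon\int_0^t\Psi}W(\vec X_t)$, plus a Novikov-type bound for the martingale) and do the $O(\theta^2)$ bookkeeping; as written, the justification offered does not support the claims, and the paper's combined treatment of $z(T)$ is the simpler and correct route.
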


\begin{proof}
%Let $\eps\in(0,\frac17)$ be such that $$R_{V,M}^{\frac{p_0}{p_1}}\eps^{\frac{p_1-p_0}{p_0}} +\eps H_1 T\leq \frac{\kappa^*}8(T+r)\text{ for any }T\geq 2r,$$
%where $R_{V,M}$ is as in \eqref{p-l2-eq10}.
By the functional It\^o formula, we obtain that
\begin{equation}\label{p-p1-eq1}
\begin{aligned}
\ln V_{-\Vrho^*}(\vec X_T)
=&\ln V_{-\Vrho^*}(\Bphi)+\int_0^T Q_{\Vrho^*}(\vec X_t)dt\\
&+\int_0^T \left(\frac{\sum_{i}c_iX_i(t)g(\vec X_t)dE_i(t)}{1+\sum_{i}c_iX_i(t)}-\sum_i \rho^*_ig_i(\vec X_t)dE_i(t)\right)\\
=:&\ln V_{-\Vrho^*}(\Bphi)+z(T).
\end{aligned}
\end{equation}
Because of \eqref{p-p1-eq1} and \eqref{EV-bounded-ne}, we have
\begin{equation}\label{p-p1-eq2}
\Ephi\exp\left(p_0z(T)\right)=\frac{\Ephi V_{-\Vrho^*}^{p_0}(\vec X_T)}{V_{-\Vrho^*}^{p_0}(\Bphi)}\leq e^{A_0p_0T}.
\end{equation}
Another consequence of \eqref{EV-bounded-ne} is that
\begin{equation}\label{p-p1-eq3}
\frac{\Ephi V_{\Vrho^*}^{p_0}(\vec X_T)}{V_{\Vrho^*}^{p_0}(\Bphi)}\leq e^{A_0p_0T}.
\end{equation}
We obtain from the definition of $V_{\Vrho}^{p_0}(\Vphi)$ that
\begin{equation}\label{p-p1-eq4}
\begin{aligned}
V_{-\Vrho^*}^{-p_0}(\Vphi)=&\left(1+\vec c^\top \vec x\right)^{-2p_0}\exp\Big\{-2p_0A_2\int_{-r}^0\mu(ds)\int_s^0 e^{\gamma(u-s)}h\big(\Vphi(u)\big)du\Big\}V_{\Vrho^*}^{p_0}(\Vphi)\\
\leq& V_{\Vrho^*}^{p_0}(\Vphi), \text{ where }\vec x:=\Vphi(0).
\end{aligned}
\end{equation}
Applying \eqref{p-p1-eq4} and \eqref{p-p1-eq3} to \eqref{p-p1-eq1} yields
\begin{equation}\label{p-p1-eq5}
\begin{aligned}
&\Ephi \exp(-p_0 z(T))=\dfrac{\Ephi V_{-\Vrho^*}^{-p_0}(\vec X_T)}{V_{-\Vrho^*}^{-p_0}(\Bphi)}%\leq \dfrac{\Ephi V_{\Vrho^*}^{p_0}(\vec X_T)}{V_{-\Vrho^*}^{-p_0}(\vec x)}\\
\\
\leq& \dfrac{\Ephi V_{\Vrho^*}^{p_0}(\vec X_T)}{V_{\Vrho^*}^{\delta_0}(\Bphi)}\left(1+\vec c^\top \Bphi(0)\right)^{-2p_0}\exp\Big\{-2p_0A_2\int_{-r}^0\mu(ds)\int_s^0 e^{\gamma(u-s)}h\big(\Bphi(u)\big)du\Big\}\\
\leq& \left(1+\vec c^\top \Bphi(0)\right)^{-2p_0}\exp\Big\{-2p_0A_2\int_{-r}^0\mu(ds)\int_s^0 e^{\gamma(u-s)}h\big(\Bphi(u)\big)du\Big\}\exp(A_0p_0T).
\end{aligned}% \text{ for } \theta\in[0,\delta_0].
\end{equation}
In view of \eqref{p-p1-eq2} and \eqref{p-p1-eq5}, an application of Lemma \ref{log-Lap} for $z(T)$ implies that there is $\tilde K_2\geq 0$ such that
$$0\leq \dfrac{d^2\tilde\eta_{\Bphi,T}}{d\theta^2}(\theta)\leq \tilde K_2\,\text{ for all }\,\theta\in\left[0,\frac{p_0}2\right),\, \Bphi\in\C_V(\hat\delta), T\in [T^*,n^*T^*],$$
where
$$\tilde\eta_{\Bphi,T}(\theta)=\ln\Ephi \exp(\theta z(T)).$$
Hence, using Lemma \ref{log-Lap} and \eqref{p-p1-eq0} yields
$$\dfrac{d\tilde\eta_{\Bphi,T}}{d\theta}(0)=\Ephi z(T)\leq -\dfrac12\kappa^*T\,\text{ for }\, \Bphi\in \C_V(\hat\delta), T\in [T^*,n^*T^*].$$
By a Taylor expansion around $\theta=0$, for $\Bphi\in \C_V(\hat\delta), T\in [T^*,n^*T^*]$, and $\theta\in\left[0,\frac{p_0}2\right)$, we have
$$\tilde\eta_{\Bphi,T}(\theta)\leq -\dfrac12\kappa^*T\theta+\theta^2\tilde K_2 .$$
Now, if we choose $\theta\in\left(0,\frac{p_0}2\right)$ satisfying
$\theta<\frac{\kappa^*T^*}{4\tilde K_2}$,  we get
\begin{equation}\label{p-p1-eq6}
\tilde\eta_{\Bphi,T}(\theta)\leq -\dfrac14\kappa^*T\theta\,\,\text{ for all }\,\Bphi\in\C_V(\hat\delta), T\in [T^*,n^*T^*].
\end{equation}
In light of \eqref{p-p1-eq6}, we have for such $\theta$, $\Bphi\in \C_V(\hat\delta)$, and $T\in [T^*,n^*T^*]$ that
\begin{equation}\label{p-p1-eq7}
\dfrac{\Ephi V_{-\Vrho^*}^\theta(\vec X_T)}{V_{-\Vrho^*}^\theta(\Bphi)}=\exp \tilde\eta_{\Bphi,T}(\theta)\leq\exp\left(-\frac{1}{4}\kappa^*T\theta\right).
\end{equation}
On the other hand, because of \eqref{EV-bounded-ne},
we have for $\Bphi\notin\C_V(\hat\delta)$ but satisfying $\Bphi\in \C_+^\circ\cap\C_{V,M}$ and $T\in  [T^*,n^*T^*]$ that
\begin{equation}\label{p-p1-eq8}
\Ephi V_{-\Vrho^*}^\theta(\vec X_T)\leq \exp(\theta n^*T^*H)\sup\limits_{\Bphi\notin \C_V(\hat\delta), \Bphi\in \C_{V,M}}\{V_{-\Vrho^*}^{\theta}(\Bphi)\}=:\wdt K_\theta<\infty.
\end{equation}
Combining \eqref{p-p1-eq7} and \eqref{p-p1-eq8} completes the proof.
\end{proof}

\begin{thm}\label{p-theorem1-111}
Assume that Assumptions {\rm\ref{asp1}}, {\rm\ref{asp-2}}, and  {\rm\ref{asp-lambda>0}} hold.
There is a finite constant $K^*$ such that
\begin{equation*}
\limsup_{t\to\infty}\Ephi V_{-\Vrho^*}^{\theta}(\vec X_t)\leq K^*\text{ for all }\Bphi\in\C^{\circ}_+.
\end{equation*}
As a result, 
{\color{blue} 
	The solution $\vec X$ of \eqref{Kol-Eq} is strongly stochastically persistent.
}

\end{thm}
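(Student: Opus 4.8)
Write $W:=V_{-\Vrho^*}^{\theta}$, with $\theta\in(0,p_0/2)$ as produced in Proposition \ref{p-prop1}. The plan is to upgrade Proposition \ref{p-prop1} — which controls $W$ only over time windows of bounded length $[T^*,n^*T^*]$ starting inside $\C_{V,M}$ — to the uniform estimate $\limsup_{t\to\infty}\Ephi W(\vec X_t)\le K^*$ for every $\Bphi\in\C_+^\circ$, and then to read off strong stochastic persistence. The substantive work lies in matching the bounded time windows of Proposition \ref{p-prop1} with the random, a priori unbounded excursions of $\vec X_t$ away from $\C_{V,M}$; this is precisely what the calibration $\gamma_0(n^*-1)-A_0>0$ is for.

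First I would record drift facts. Specializing Lemma \ref{LV} to the pair $(\Vrho,p_0)=(-\Vrho^*,\theta)$, which is admissible in \eqref{cd-p0}, estimate \eqref{est-LV} reads $\Lom W(\Vphi)\le\theta W(\Vphi)\big[A_0\1_{\{|\vec x|<M\}}-\gamma_0-Ah(\vec x)-A_2\gamma\int_{-r}^0\mu(ds)\int_s^0 e^{\gamma(u-s)}h(\Vphi(u))du-\cdots\big]$; discarding the nonpositive terms and using $A_0\1_{\{|\vec x|<M\}}-A_2\gamma\int_{-r}^0\mu(ds)\int_s^0 e^{\gamma(u-s)}h(\Vphi(u))du\le 0$ off $\C_{V,M}$ gives $\Lom W(\Vphi)\le-\theta\gamma_0 W(\Vphi)$ for $\Vphi\notin\C_{V,M}$, while $\Ephi W(\vec X_t)\le W(\Bphi)e^{\theta A_0 t}$ for all $t$ follows from \eqref{EV-bounded-ne} and Jensen's inequality. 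The same reasoning applied to $V_{\vec 0}^{p_0}$ gives $\Lom V_{\vec 0}^{p_0}\le-p_0\gamma_0 V_{\vec 0}^{p_0}$ off $\C_{V,M}$, so — since $V_{\vec 0}\ge 1$ — the (localized, as in the proof of Theorem \ref{existence}) supermartingale $e^{p_0\gamma_0(t\wedge\tau)}V_{\vec 0}^{p_0}(\vec X_{t\wedge\tau})$ yields the tail bound $\PPphi(\tau>t)\le V_{\vec 0}^{p_0}(\Bphi)e^{-p_0\gamma_0 t}$ for the hitting time $\tau:=\inf\{t\ge 0:\vec X_t\in\C_{V,M}\}$ (a stopping time, since $\C_{V,M}$ is closed and $t\mapsto\vec X_t$ is $\C$-continuous), in particular $\tau<\infty$ a.s.; and since $\Lom W\le 0$ off $\C_{V,M}$, the nonnegative supermartingale $W(\vec X_{t\wedge\tau})$ gives $\Ephi W(\vec X_\tau)\le W(\Bphi)$ by Fatou.

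Next I would iterate along returns to $\C_{V,M}$. Put $\tau_0:=\tau$ and recursively $\sigma_{k+1}:=\tau_k+T^*$, $\tau_{k+1}:=\inf\{t\ge\sigma_{k+1}:\vec X_t\in\C_{V,M}\}$; applying the tail bound at $\sigma_{k+1}$ via the strong Markov property shows $\tau_{k+1}<\infty$ a.s., and $\tau_k\uparrow\infty$ since $\tau_{k+1}-\tau_k\ge T^*$. Because $\Bphi\in\C_+^\circ$ forces $\vec X_t\in\C_+^\circ$ for all $t$ (Theorem \ref{existence}), we have $\vec X_{\tau_k}\in\C_+^\circ\cap\C_{V,M}$, so Proposition \ref{p-prop1} on $[\tau_k,\tau_k+T^*]$ (length $T^*\in[T^*,n^*T^*]$) gives $\E[W(\vec X_{\sigma_{k+1}})\mid\F_{\tau_k}]\le q_0 W(\vec X_{\tau_k})+\wdt K_\theta$ with $q_0:=e^{-\frac14\theta\kappa^* T^*}<1$; and since the path stays off $\C_{V,M}$ on $[\sigma_{k+1},\tau_{k+1})$, the supermartingale property gives $\E[W(\vec X_{\tau_{k+1}})\mid\F_{\sigma_{k+1}}]\le W(\vec X_{\sigma_{k+1}})$. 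Composing, $w_{k+1}\le q_0 w_k+\wdt K_\theta$ where $w_k:=\Ephi W(\vec X_{\tau_k})$; with $w_0\le W(\Bphi)$ this yields $\limsup_k w_k\le\wdt K_\theta/(1-q_0)=:K_0$. To pass from the checkpoint times to all $t$, write $\tau_N\le t<\tau_{N+1}$ with $N=N(t)\to\infty$: if $t-\tau_N\le T^*$ use $\Ephi W(\vec X_u)\le W(\Bphi)e^{\theta A_0 u}$, otherwise split at $\sigma_{N+1}=\tau_N+T^*$ and combine Proposition \ref{p-prop1} on $[\tau_N,\sigma_{N+1}]$ with the excursion supermartingale on $[\sigma_{N+1},t]$ (which lies off $\C_{V,M}$); either way $\E[W(\vec X_t)\mid\F_{\tau_N}]\le e^{\theta A_0 T^*}W(\vec X_{\tau_N})+\wdt K_\theta$. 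A uniform-integrability control of $\{W(\vec X_{\tau_k})\}_k$ against the random index $N(t)$ — for which the return-time tail bound and the choice $\gamma_0(n^*-1)>A_0$ (bounding, via $V_{\vec 0}\le(1+M|\vec c|)e^{A_0}$ on $\C_{V,M}$ from \eqref{add-CVM}, $V_{\vec 0}\ge 1$, and the rate-$\gamma_0$ decay off $\C_{V,M}$, how long an excursion can avoid $\C_{V,M}$) are exactly what is used — then gives $\limsup_{t\to\infty}\Ephi W(\vec X_t)\le e^{\theta A_0 T^*}K_0+\wdt K_\theta=:K^*$. This reconciliation of random excursion lengths with the bounded-window validity of Proposition \ref{p-prop1} is where I expect the main difficulty.

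Finally, strong stochastic persistence: by \eqref{EV-bounded} with $\Vrho=\vec 0$ and $V_{\vec 0}(\vec X_t)\ge 1+c_iX_i(t)$, the Markov inequality makes $\PPphi\{|\vec X(t)|\le R_0\}\ge 1-\eps/2$ for all large $t$ once $R_0$ is large; on that event $W(\vec X_t)\ge\prod_j X_j(t)^{-\theta\rho^*_j}\ge X_i(t)^{-\theta\rho^*_i}R_0^{-\theta\sum_{j\ne i}\rho^*_j}$, so $W(\vec X_t)\le K_1$ forces $X_i(t)\ge\big(K_1 R_0^{\theta\sum_{j\ne i}\rho^*_j}\big)^{-1/(\theta\rho^*_i)}$, and $\PPphi\{W(\vec X_t)\le K_1\}\ge 1-\eps/2$ for all large $t$ once $K_1$ is large, by the bound just proved and the Markov inequality. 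Intersecting the two events and letting $R=R(\eps)$ be the larger of $R_0$ and this lower threshold (maximized over the finitely many $i$) gives $\liminf_{t\to\infty}\PPphi\{R^{-1}\le|X_i(t)|\le R\}\ge 1-\eps$ for every $i=1,\dots,n$ and every $\Bphi\in\C_+^\circ$, which is Definition \ref{def-persist}.
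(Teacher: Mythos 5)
Your drift facts, the recursion $w_{k+1}\le q_0w_k+\wdt K_\theta$ along the return times $\tau_k$, and the final deduction of persistence from the $\limsup$ bound are all sound (and the persistence step matches the paper's, which applies Markov's inequality to $V_{-\Vrho^*}^\theta$ using $V_{-\Vrho^*}(\Vphi)\ge(1+\vec c^\top\vec x)/\prod_ix_i^{\rho_i^*}$). The genuine gap is exactly the step you flag: passing from $\limsup_k\Ephi W(\vec X_{\tau_k})\le K_0$ (with $W:=V_{-\Vrho^*}^\theta$) to $\limsup_{t\to\infty}\Ephi W(\vec X_t)\le K^*$. The inequality ``$\E[W(\vec X_t)\mid\F_{\tau_N}]\le e^{\theta A_0T^*}W(\vec X_{\tau_N})+\wdt K_\theta$'' with the random index $N=N(t)$ is not licensed by the strong Markov property, because $\{N(t)=k\}=\{\tau_k\le t<\tau_{k+1}\}$ is not $\F_{\tau_k}$-measurable; and even granting it, $\Ephi W(\vec X_{\tau_{N(t)}})$ is not controlled by $\sup_k w_k$, since $N(t)$ is size-biased toward long excursions and is correlated with the values $W(\vec X_{\tau_k})$. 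If instead one decomposes $\Ephi W(\vec X_t)=\sum_k\Ephi[\1_{\{N(t)=k\}}W(\vec X_t)]$ and estimates each term as you indicate (invoking the supermartingale property on the excursion and then dropping the constraint $t<\tau_{k+1}$), the bound becomes $\sum_k\Ephi[\1_{\{\tau_k\le t-T^*\}}(q_0W(\vec X_{\tau_k})+\wdt K_\theta)]$ plus short-window terms, which grows like the expected number of returns before $t$ and gives nothing as $t\to\infty$. What is missing is a quantitative decay along each excursion — e.g.\ retaining the weight $e^{\theta\gamma_0(s-\sigma_{k+1})}$ in the supermartingale off $\C_{V,M}$, so the excursion still in progress at time $t$ contributes $e^{-\theta\gamma_0(t-\sigma_{k+1})}W(\vec X_{\sigma_{k+1}})$ and the sum over $k$ is geometric because $\tau_{k+1}-\tau_k\ge T^*$ — plus some moment argument (say for $V_{-\Vrho^*}^{2\theta}$) to decouple the discount from $W(\vec X_{\tau_k})$. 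None of this is supplied, and your appeal to ``$\gamma_0(n^*-1)>A_0$'' is not attached to any estimate in your scheme: with windows of fixed length $T^*$ and random return times, $n^*$ never enters.

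The paper resolves this difficulty by avoiding random return times altogether. It fixes the horizon $n^*T^*$, sets $\tau=\inf\{t\ge0:\vec X_t\in\C_{V,M}\}$, and applies Dynkin's formula to $e^{\theta\gamma_0t}V_{-\Vrho^*}^\theta(\vec X_t)$ stopped at $\tau\wedge n^*T^*$ (using $\Lom V_{-\Vrho^*}^\theta\le-\theta\gamma_0V_{-\Vrho^*}^\theta$ off $\C_{V,M}$), which gives $\Ephi[e^{\theta\gamma_0(\tau\wedge n^*T^*)}V_{-\Vrho^*}^\theta(\vec X_{\tau\wedge n^*T^*})]\le V_{-\Vrho^*}^\theta(\Bphi)$. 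It then splits according to the location of $\tau$: if $\tau\le(n^*-1)T^*$, the remaining time $n^*T^*-\tau$ lies in $[T^*,n^*T^*]$ and Proposition \ref{p-prop1} applies at $\tau$; if $(n^*-1)T^*<\tau<n^*T^*$, the crude growth $e^{\theta A_0T^*}$ over the remaining time is paid for by the accumulated weight $e^{\theta\gamma_0(n^*-1)T^*}$ — this is precisely where $\gamma_0(n^*-1)-A_0>0$ is used; if $\tau\ge n^*T^*$, the weight $e^{\theta\gamma_0n^*T^*}$ alone suffices. This yields the contraction $\Ephi V_{-\Vrho^*}^\theta(\vec X_{n^*T^*})\le\hat q_1V_{-\Vrho^*}^\theta(\Bphi)+q_1^*$ with $\hat q_1<1$ for every $\Bphi\in\C_+^\circ$, which is iterated at the deterministic times $kn^*T^*$ and interpolated with \eqref{EV-bounded-ne}. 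The most direct repair of your outline is to adopt this fixed-window, first-hitting-time device in place of the full return-time decomposition.
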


\begin{proof}
	Once we obtained Proposition \ref{p-prop1}, the proof is similar to \cite[Theorem 4.1]{HN18}.
By virtue of \eqref{est-LV}, we have
\begin{equation}\label{et1.1}
\Lom V_{-\Vrho^*}^\theta(\Vphi)\leq -\theta\gamma_0V_{-\Vrho^*}^\theta(\Vphi) \text{ if }\Vphi\notin \C_{V,M},
\end{equation}
where $\C_{V,M}$ is as in Theorem \ref{existence}.
Define the 
%following 
stopping time
\begin{equation}\label{e:tau}
\tau=\inf\{t\geq0: \vec X_t\in \C_{V,M}\}.
\end{equation}
We
obtain from Dynkin's formula and \eqref{et1.1} that
$$
\begin{aligned}
\Ephi&\left[ \exp\big\{\theta\gamma_0\big(\tau\wedge n^*T^*\big)\big\}V_{-\Vrho^*}^\theta\big(\vec X_{\tau\wedge n^*T^*}\big)\right]\\
&\leq V_{-\Vrho^*}^\theta(\Bphi) +\Ephi \int_0^{\tau\wedge n^*T^*}\exp\{\theta\gamma_0 s\}\left[\Lom V_{-\Vrho^*}^\theta(\vec X_s)+ \theta\gamma_0V_{-\Vrho^*}^\theta(\vec X_s)\right]ds\\
&\leq V_{-\Vrho^*}^\theta(\Bphi).
\end{aligned}
$$
As a consequence,
\begin{equation}\label{et1.2}
\begin{aligned}
V_{-\Vrho^*}^\theta(\Bphi)\geq&
\Ephi\left[ \exp\big\{\theta\gamma_0\big(\tau\wedge n^*T^*\big)\big\}V_{-\Vrho^*}^\theta\big(\vec X_{\tau\wedge n^*T^*}\big)\right]\\
=&
 \Ephi \left[\1_{\{\tau\leq (n^*-1)T^*\}} \exp\big\{\theta\gamma_0\big(\tau\wedge n^*T^*\big)\big\}V_{-\Vrho^*}^\theta\big(\vec X_{\tau\wedge n^*T^*}\big)\right]\\
 &+\Ephi \left[\1_{\{ (n^*-1)T^*<\tau<n^*T^*\}} \exp\big\{\theta\gamma_0\big(\tau\wedge n^*T^*\big)\big\}V_{-\Vrho^*}^\theta\big(\vec X_{\tau\wedge n^*T^*}\big)\right]\\
&+ \Ephi \left[\1_{\{\tau\geq n^*T^*\}}\exp\big\{\theta\gamma_0\big(\tau\wedge n^*T^*\big)\big\}V_{-\Vrho^*}^\theta\big(\vec X_{\tau\wedge n^*T^*}\big)\right]\\
\geq&
 \Ephi \left[\1_{\{\tau\leq (n^*-1)T^*\}}V_{-\Vrho^*}^\theta(\vec X_{\tau})\right]\\
 &+\exp\left\{\theta\gamma_0 ((n^*-1)T^*)\right\}\Ephi \left[\1_{\{ (n^*-1)T^*<\tau<n^*T^*\}}V_{-\Vrho^*}^\theta(\vec X_{\tau})\right]\\
&+\exp\left\{\theta\gamma_0 n^*T^*\right\} \Ephi \left[\1_{\{\tau\geq n^*T^*\}}V_{-\Vrho^*}^\theta(\vec X_{n^*T^*})\right].\\
 \end{aligned}
\end{equation}
Combining the Markov property of $(\vec X_t)$ and
Proposition \ref{p-prop1} yields
\begin{equation}\label{et1.3}
\begin{aligned}
\Ephi&\left[ \1_{\{\tau\leq (n^*-1)T^*\}}V_{-\Vrho^*}^\theta(\vec X_{n^*T^*})\right]\\
&\leq
 \Ephi \left[\1_{\{\tau\leq (n^*-1)T^*\}}\big[\wdt K_\theta+e^{-\frac{1}{4}\theta \kappa^*(n^*T^*-\tau)}V_{-\Vrho^*}^\theta(\vec X_{\tau})\big]\right]\\
 &\leq \wdt K_\theta+ \exp\left(-\frac{1}{4}\theta \kappa^*T^*\right)\Ephi\left[\1_{\{\tau\leq (n^*-1)T^*\}}V_{-\Vrho^*}^\theta(\vec X_{\tau})\right].
 \end{aligned}
\end{equation}
Using again the strong Markov property of $(\vec X_t)$ and
\eqref{EV-bounded-ne}, we obtain that
\begin{equation}\label{et1.4}
\begin{aligned}
\Ephi&\left[ \1_{\{(n^*-1)T^*<\tau<n^*T^*\}}V_{-\Vrho^*}^\theta(\vec X_{n^*T^*})\right]\\
&\leq
 \Ephi \left[\1_{\{(n^*-1)T^*<\tau<n^*T^*\}}e^{\theta A_0(n^*T^*-\tau)}V_{-\Vrho^*}^\theta(\vec X_{\tau})\right]\\
 &\leq \exp(\theta A_0T^*)\Ephi\left[\1_{\{(n^*-1)T^*<\tau<n^*T^*\}}V_{-\Vrho^*}^\theta(\vec X_{\tau})\right].
 \end{aligned}
\end{equation}
Applying \eqref{et1.3} and \eqref{et1.4} to \eqref{et1.2} leads to 
\begin{equation}\label{et1.5}
\begin{aligned}
V_{-\Vrho^*}^\theta(\Bphi)
\geq& \exp\left(\frac{1}{4}\theta \kappa^*T^*\right)\Ephi\left[\1_{\{\tau\leq (n^*-1)T^*\}}V_{-\Vrho^*}^\theta(\vec X_{n^*T^*})\right]-\exp\left(\frac{1}{4}\theta \kappa^*T^*\right)\wdt K_\theta\\
 &+\exp(-\theta A_0T^*)\exp\left(\theta\gamma_0 (n^*-1)T^*\right)\Ephi \left[\1_{\{ (n^*-1)T^*<\tau<n^*T^*\}}V_{-\Vrho^*}^\theta(\vec X_{n^*T^*})\right]\\
&+\exp\left(\theta\gamma_0 n^*T^*\right) \Ephi \left[\1_{\{\tau\geq n^*T^*\}}V_{-\Vrho^*}^\theta(\vec X_{n^*T^*})\right]\\
\geq & \exp( m\theta T^*)\Ephi V_{-\Vrho^*}^\theta(\vec X_{n^*T^*})-\wdt K_\theta\exp\left(\frac{1}{4}\theta \kappa^*T^*\right),
 \end{aligned}
\end{equation}
where $m=\min\left\{\frac{1}{4} \kappa^*, \gamma_0 n^*, \gamma_0 (n^*-1)- A_0\right\}>0$ by \eqref{e:n*}. As a result,
\begin{equation*}
\Ephi V_{-\Vrho^*}^\theta(\vec X_{n^*T^*})\leq \hat q_1 V_{-\Vrho^*}^\theta(\Bphi)+q_1^*\,\text{ for all }\, \Bphi\in\C^{\circ}_+,
\end{equation*}
for some $0<\hat q_1<1$, $0<q_1^*<\infty$. Therefore, by the Markov property of $\vec X_t$, we have
\begin{equation*}
\Ephi V_{-\Vrho^*}^\theta(\vec X_{(k+1)n^*T^*})\leq \hat q_1 \Ephi V_{-\Vrho^*}^\theta(\vec X_{kn^*T^*})+q_1^*\,\text{ for all }\, \Bphi\in\C^{\circ}_+,
\end{equation*}
Using this recursively, we obtain
\begin{equation}\label{et1.6}
\Ephi V_{-\Vrho^*}^\theta (\vec X_{kn^*T^*})\leq \hat q_1^kV_{-\Vrho^*}^\theta(\Bphi)+\dfrac{q_1^*(1-\hat q_1^k)}{1-\hat q_1}.
\end{equation}
We obtain from \eqref{et1.6} and \eqref{EV-bounded-ne} that
\begin{equation*}
\Ephi V_{-\Vrho^*}^\theta (\vec X_{T})\leq \left[\hat q_1^kV_{-\Vrho^*}^\theta(\Bphi)+\dfrac{q_1^*(1-\hat q_1^k)}{1-\hat q_1}\right]e^{A_0\theta T^*} \;\text{for all }T\in[kn^*T^*,kn^*T^*+T^*].
\end{equation*}
Hence, by letting $k\to\infty$, we obtain the existence of a finite constant $K^*$ such that
\begin{equation}\label{et1.7}
\limsup_{t\to\infty}\Ephi V_{-\Vrho^*}^{\theta}(\vec X_t)\leq K^*\text{ for all }\Bphi\in\C^{\circ}_+.
\end{equation}
Finally, 
%\eqref{et1.0} 
the strongly
stochastic persistence of $\vec X$ is obtained by applying Markov's inequality to \eqref{et1.7},
and using
$V_{-\Vrho^*}(\Vphi)\geq \frac{1+\vec c^\top\vec x}{\prod_{i=1}^n x_i^{\rho_i^*}}$ and
$\sum_{i=1}^n\rho_i^*<1$, $\rho^*_i>0$.
\end{proof}

To proceed, we prove the uniqueness of the invariant probability measure under
suitable assumptions.
For $\vec x\in\R^{n,\circ}_+$, $\ln \vec x$ is understood as the component-wise logarithm of $\vec x$.
By using the fact 
$$\abs{\vec x^{(1)}-\vec x^{(2)}}\leq \abs{\ln \vec x^{(1)}-\ln \vec x^{(2)}}\abs{1+\vec x^{(1)}+\vec x^{(2)}}$$ and basic inequalities (Young's inequality and the Cauchy-Schwarz inequality), we obtain from \eqref{u-eq0} and Assumption \ref{asp-unique-ipm} (ii) that there is some constant $D_1$ depending only $D_0,d_0$ satisfying
\begin{equation}\label{u-eq0'-7-20}
\begin{aligned}
&\sum_i\Big(\abs{f_i(\Vphi^{(1)})-f_i(\Vphi^{(2)})}+\abs{g_i(\Vphi^{(1)})-g_i(\Vphi^{(2)})}+\abs{g_i^2(\Vphi^{(1)})-g_i^2(\Vphi^{(2)})}\Big)\\
&\quad\leq D_1\abs{\ln \vec x^{(1)}-\ln \vec x^{(2)}}\abs{1+\vec x^{(1)}+\vec x^{(2)}}^{d_0+1}\\
&\qquad+D_1\int_{-r}^0 \abs{\ln \Vphi^{(1)}(s)-\ln \Vphi^{(2)}(s)}\abs{1+\Vphi^{(1)}(s)+\Vphi^{(2)}(s)}^{d_0+1}\mu(ds),
\end{aligned}
\end{equation}
and
\begin{equation}\label{u-eq0'}
\begin{aligned}
\sum_i&\Big(\abs{f_i(\Vphi^{(1)})-f_i(\Vphi^{(2)})}+\abs{g_i(\Vphi^{(1)})-g_i(\Vphi^{(2)})}\\
&+\abs{g_i^2(\Vphi^{(1)})-g_i^2(\Vphi^{(2)})}\Big)\abs{\ln \vec x^{(1)}-\ln \vec x^{(2)}}\\
\leq& D_1\abs{\ln \vec x^{(1)}-\ln \vec x^{(2)}}^2\abs{1+\vec x^{(1)}+\vec x^{(2)}}^{d_0+1}\\
&\qquad+D_1\int_{-r}^0 \abs{\ln \Vphi^{(1)}(s)-\ln \Vphi^{(2)}(s)}^2\abs{1+\Vphi^{(1)}(s)+\Vphi^{(2)}(s)}^{2d_0+2}\mu(ds),
\end{aligned}
\end{equation}
and
\begin{equation}\label{u-eq0'-7-22}
\begin{aligned}
\sum_i&\Big(\abs{f_i(\Vphi^{(1)})-f_i(\Vphi^{(2)})}+\abs{g_i(\Vphi^{(1)})-g_i(\Vphi^{(2)})}\\
&+\abs{g_i^2(\Vphi^{(1)})-g_i^2(\Vphi^{(2)})}\Big)\abs{\ln \vec x^{(1)}-\ln \vec x^{(2)}}^3\\
\leq& D_1\abs{\ln \vec x^{(1)}-\ln \vec x^{(2)}}^4\abs{1+\vec x^{(1)}+\vec x^{(2)}}^{4d_0+4}\\
&\qquad+D_1\int_{-r}^0 \abs{\ln \Vphi^{(1)}(s)-\ln \Vphi^{(2)}(s)}^4\abs{1+\Vphi^{(1)}(s)+\Vphi^{(2)}(s)}^{4d_0+4}\mu(ds).
\end{aligned}
\end{equation}
%where
% $\ln \vec x:=(\ln x_1,\dots,\ln x_n)$
%for $\vec x=(x_1,\dots,x_n)\in \R^n, x_i>0$.
To apply asymptotic couplings method as well as the theories and results in \cite{Ha09}, we
let $Y_i(t)=\ln X_i(t)$ and
consider the following equations
\begin{equation}\label{Yi}
\begin{cases}
dY_i(t)=\left[f_i(\vec X_t)-\frac{g_i^2(\vec X_t)\sigma_{ii}^2}{2}\right]dt+g_i(\vec X_t)dE_i(t),\;i=1,\dots,n,\\
\vec Y_0=\ln \vec \Bphi, \;\Bphi\in \C_+^\circ,
\end{cases}
\end{equation}
and
\begin{equation}\label{Yi-tilde}
\begin{cases}
d\wdt Y_i(t)=\left[f_i(\wdt{\vec X}_t)-\frac{g_i^2(\wdt{\vec X}_t)\sigma_{ii}^2}{2}\right]dt+\wdt\lambda \left[1+X_i(t)+\wdt X_i(t)\right]^{4d_0+4}\left[Y_i(t)-\wdt Y_i(t)\right]dt \\
\hspace{6cm}+g_i(\wdt{\vec X}_t)dE_i(t),\;i=1,\dots,n\\
\wdt{\vec Y}_0=\ln \wdt{\Bphi},\;\wdt{\Bphi}\neq \Bphi\in\C_+^\circ,
\end{cases}
\end{equation}
where $\wdt\lambda$ is sufficiently large to be determined later;
and $\wdt {\vec X}(t):=\left(e^{\wdt Y_1(t)},\dots,e^{\wdt Y_n(t)}\right)$.
%It is similar to \eqref{u-eq0'}, under Assumption \ref{asp-unique-ipm} (ii) we can obtain a similar estimate for $g_i^2(\cdot)$ and then
Put $\vec Z:=\vec Y- \wdt {\vec Y}$.
Combining the functional It\^o formula and \eqref{u-eq0'-7-20}, \eqref{u-eq0'}, \eqref{u-eq0'-7-22}, one has
\begin{equation}\label{u-eq1}
\begin{aligned}
d\abs{\vec Z(t)}^2\leq &\left[D_2\int_{-r}^0 \abs{1+\wdt {\vec X}(t+s)+\vec X(t+s)}^{4d_0+4}\abs{\vec Z(t+s)}^2\mu(ds)\right]dt\\
&- \left(\wdt\lambda-D_2\right) \left|1+\wdt {\vec X}(t)+\vec X(t)\right|^{4d_0+4}\abs{\vec Z(t)}^2dt\\
&+2\sum_{i}\left(Y_i(t)-\wdt Y_i(t)\right)\left(g_i(\vec X_t)-g_i(\wdt{\vec X}_t)\right)dE_i(t),
\end{aligned}
\end{equation}
and
\begin{equation}\label{u-eq1-7-21}
\begin{aligned}
d\abs{\vec Z(t)}^4\leq &\left[D_2\int_{-r}^0 \abs{1+\wdt {\vec X}(t+s)+\vec X(t+s)}^{4d_0+4}\abs{\vec Z(t+s)}^4\mu(ds)\right]dt\\
&- \left(\wdt\lambda-D_2\right) \left|1+\wdt {\vec X}(t)+\vec X(t)\right|^{4d_0+4}\abs{\vec Z(t)}^4dt\\
&+4\sum_{i}\left(Y_i(t)-\wdt Y_i(t)\right)^3\left(g_i(\vec X_t)-g_i(\wdt{\vec X}_t)\right)dE_i(t),
\end{aligned}
\end{equation}
for some constant $D_2$ depending only $D_0$, $d_0$, $\sigma_{ij}$ and independent of $\vec X_t, \wdt{\vec X}_t$.
%where $M(t)$ is some martingale.
For $\Vphi^{(1)},\Vphi^{(2)}\in\C_+^\circ$, define
\begin{align*}
 \wdt U(\Vphi^{(1)},&\Vphi^{(2)}):=\abs{\ln \vec x^{(1)}-\ln \vec x^{(2)}}^4 \\
&+\frac{D_2+9n\sigma^*D_1}{\gamma}\int_{-r}^0 \mu(ds)\int_s^0 e^{\gamma (u-s)}\abs{1+\Vphi^{(1)}(s)
+\Vphi^{(2)}(s)}^{4d_0+4}\\
&\hspace{3cm}\times\abs{\Vphi^{(1)}(s)-\Vphi^{(2)}(s)}^4du,
\end{align*}
and
\begin{align*}
U(\Vphi^{(1)},&\Vphi^{(2)}):=\abs{\ln \vec x^{(1)}-\ln \vec x^{(2)}}^2 \\
&+\frac{D_2+9n\sigma^*D_1}{\gamma}\int_{-r}^0 \mu(ds)\int_s^0 e^{\gamma (u-s)}\abs{1+\Vphi^{(1)}(s)+\Vphi^{(2)}(s)}^{4d_0+4}\\
&\hspace{3cm}\times
\abs{\Vphi^{(1)}(s)-\Vphi^{(2)}(s)}^2du,
\end{align*}
where $\sigma^*:=\max\{\sigma_{ij}: 1\leq i,j\leq n\}$.
% and let
%$$\hat P(\vec X_t,\wdt{\vec X}_t):=\E P(\vec X_t,\wdt{\vec X}_t).$$
Hence, by direct calculations using the functional It\^o formula, \cite[Remark 2.2]{DY19}  and then applying \eqref{u-eq1}, \eqref{u-eq1-7-21}, it is easily seen that we can choose $\wdt \lambda$ being sufficiently large such that
\begin{equation}\label{u-eq2'}
\begin{aligned}
d\wdt U(\vec X_t,\wdt{\vec X}_t)\leq &-2D_3 \bigg( \wdt U(\vec X_t,\wdt{\vec X}_t)+|1+\vec X(t)+\wdt{
\vec X}(t)|^{4d_0+4}\abs{\vec Z(t)}^4\\
&\hspace{1.5cm}+\int_{-r}^0|1+\vec X(t+s)+\wdt{
\vec X}(t+s)|^{4d_0+4}\abs{\vec Z(t+s)}^4\mu(ds) \bigg)dt\\
&+4\sum_{i} \left(Y_i(t)-\wdt Y_i(t)\right)^3\left(g_i(\vec X_t)-g_i(\wdt{\vec X}_t)\right)dE_i(t),
\end{aligned}
\end{equation}
and
\begin{equation}\label{u-eq2'-7-21}
\begin{aligned}
d U(\vec X_t,\wdt{\vec X}_t)\leq &-2D_3 \bigg( U(\vec X_t,\wdt{\vec X}_t)+|1+\vec X(t)+\wdt{
\vec X}(t)|^{4d_0+4}\abs{\vec Z(t)}^2\\
&+\int_{-r}^0|1+\vec X(t+s)+\wdt{
\vec X}(t+s)|^{4d_0+4}\abs{\vec Z(t+s)}^2\mu(ds) \bigg)dt\\
&+2\sum_{i}\left(Y_i(t)-\wdt Y_i(t)\right)\left(g_i(\vec X_t)-g_i(\wdt{\vec X}_t)\right)dE_i(t),
\end{aligned}
\end{equation}
for some positive constant $D_3>8n\sigma^*D_1$.

Similar to \cite[Theorem 3.1]{Ha09}, let
$$\vec v(t)=\wdt\lambda \left[(g_i(\wdt{\vec X}_t)g_j(\wdt{\vec X}_t)\sigma_{ij})_{n\times n}\right]^{-1}\left[1+|\vec X(t)|+|\wdt {\vec X}(t)|\right]^{4d_0+4}\big(\vec Y(t)-\wdt{\vec Y}(t)\big),$$
where $\left[(g_i(\Vphi)g_j(\Vphi)\sigma_{ij})_{n\times n}\right]^{-1}$ is the inverse matrix of matrix $\left[(g_i(\Vphi)g_j(\Vphi)\sigma_{ij})_{n\times n}\right]$ and for each $\eps>0$
$$\wdt {\tau}_\eps:=\inf\left\{t\geq 0:\int_0^t \abs{\vec v(s)}^2ds\geq \eps^{-1}\|\vec Y_0-\wdt{\vec Y}_0\|^2\right\}.$$

\begin{lm}\label{lem-unique-ipm}
%We have
The following assertions hold:
%properties:
\begin{equation}\label{u-tau}
\lim_{\eps\to 0}\PP\{\wdt{\tau}_\eps=\infty\}=1,
\end{equation}
and
\begin{equation}\label{u-limy}
\lim_{t\to \infty}|\vec Y(t)-\wdt{\vec Y}(t)|=0\a.s
\end{equation}
\end{lm}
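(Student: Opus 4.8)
Both claims will follow from a single fact: $\int_0^\infty\abs{\vec v(s)}^2\,ds<\infty$ $\PP$-a.s. Indeed, $t\mapsto\int_0^t\abs{\vec v(s)}^2\,ds$ is nondecreasing, so $\big\{\int_0^\infty\abs{\vec v(s)}^2\,ds<\eps^{-1}\norm{\vec Y_0-\wdt{\vec Y}_0}^2\big\}\subseteq\{\wdt\tau_\eps=\infty\}$; since $\norm{\vec Y_0-\wdt{\vec Y}_0}=\norm{\ln\Bphi-\ln\wdt{\Bphi}}>0$ is fixed (because $\Bphi\ne\wdt{\Bphi}$), these events increase as $\eps\downarrow0$ to $\big\{\int_0^\infty\abs{\vec v(s)}^2\,ds<\infty\big\}$, whence $\liminf_{\eps\to0}\PP\{\wdt\tau_\eps=\infty\}\ge\PP\big\{\int_0^\infty\abs{\vec v(s)}^2\,ds<\infty\big\}$, and \eqref{u-tau} follows once the latter equals $1$. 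The plan is to combine (i) exponential decay of the Lyapunov functional $U(\vec X_t,\wdt{\vec X}_t)$ — which already dominates $\abs{\vec Y(t)-\wdt{\vec Y}(t)}^2$, so it also yields \eqref{u-limy} — with (ii) an at most exponential a.s.\ growth bound for $\abs{\vec X(t)}+\abs{\wdt{\vec X}(t)}$ whose rate does not depend on $\wdt\lambda$. The decisive point is that $\wdt\lambda$ in \eqref{Yi-tilde} is still free: the derivation of \eqref{u-eq2'}--\eqref{u-eq2'-7-21} shows the dissipation constant $D_3$ there is of order $\wdt\lambda$ (all competing terms being $O(1)$ in $\wdt\lambda$), so the first move is to fix $\wdt\lambda$ so large that $D_3>(4d_0+4)(A_0+1)$.

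\emph{Step 1 (decay of $U$, and \eqref{u-limy}).} By \eqref{u-eq1} the coupling drift in \eqref{Yi-tilde} is dissipative, so $\vec Z=\vec Y-\wdt{\vec Y}$ does not explode; hence \eqref{Yi-tilde} has a unique global solution, $\wdt{\vec X}(t)=(e^{\wdt Y_1(t)},\dots,e^{\wdt Y_n(t)})$ is positive, and $\wdt X_i(t)=X_i(t)e^{-Z_i(t)}$. By \eqref{u-eq2'-7-21} and the functional It\^o formula, $e^{2D_3t}U(\vec X_t,\wdt{\vec X}_t)$ equals $U(\Bphi,\wdt{\Bphi})$ plus a nonincreasing process plus a local martingale; being nonnegative it is therefore a supermartingale, which converges $\PP$-a.s.\ to a finite limit, and $C_\omega:=\sup_{t\ge0}e^{2D_3t}U(\vec X_t,\wdt{\vec X}_t)<\infty$ a.s.\ by Doob's inequality. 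Since $U(\Vphi^{(1)},\Vphi^{(2)})\ge\abs{\ln\vec x^{(1)}-\ln\vec x^{(2)}}^2$ with $\vec x^{(j)}=\Vphi^{(j)}(0)$,
\begin{equation*}
\abs{\vec Y(t)-\wdt{\vec Y}(t)}^2=\abs{\ln\vec X(t)-\ln\wdt{\vec X}(t)}^2\le U(\vec X_t,\wdt{\vec X}_t)\le C_\omega e^{-2D_3t}\to 0\qquad\text{$\PP$-a.s.\ as }t\to\infty,
\end{equation*}
which is \eqref{u-limy}; in particular $\sup_{t\ge0}\abs{\vec Z(t)}\le\sqrt{C_\omega}<\infty$ a.s.

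\emph{Step 2 (growth control, and \eqref{u-tau}).} Using \eqref{EV-bounded-ne} with $\Vrho=\vec 0$ and $V_{\vec 0}(\Vphi)\ge1+\vec c^\top\vec x$ gives $\Ephi\abs{\vec X(t)}^{p_0}\le Ce^{A_0p_0t}$; the maximal estimate established as in the proof of Proposition \ref{theorem-2.4}(i) (applied with $\Vrho=\vec 0$) bounds $\E\sup_{t\in[k,k+1]}V_{\vec 0}^{p_0}(\vec X_t)$ by a constant times $e^{A_0p_0k}$, and Markov's inequality together with the Borel--Cantelli lemma then yield $\limsup_{t\to\infty}t^{-1}\ln\abs{\vec X(t)}\le A_0+1$ a.s. Because $\wdt X_i(t)=X_i(t)e^{-Z_i(t)}$ with $\abs{\vec Z}$ bounded by Step 1, the same bound holds for $\wdt{\vec X}$, so $\Theta(t):=1+\abs{\vec X(t)}+\abs{\wdt{\vec X}(t)}\le C'_\omega e^{(A_0+1)t}$ for all large $t$, with $C'_\omega<\infty$ a.s. By Assumption \ref{asp-unique-ipm}(iii) the inverse matrix in the definition of $\vec v$ has operator norm $\le C_0$ uniformly, so
\begin{equation*}
\abs{\vec v(t)}^2\le\wdt\lambda^2C_0^2\,\Theta(t)^{8d_0+8}\abs{\vec Z(t)}^2\le\wdt\lambda^2C_0^2\,\Theta(t)^{8d_0+8}U(\vec X_t,\wdt{\vec X}_t)\le\wdt\lambda^2C_0^2C_\omega\,\Theta(t)^{8d_0+8}e^{-2D_3t}.
\end{equation*}
Inserting the bound on $\Theta(t)$, for large $t$ the right-hand side is at most a random constant times $\exp\big([(8d_0+8)(A_0+1)-2D_3]\,t\big)$, whose exponent is strictly negative by the choice of $D_3$; hence $\int_0^\infty\abs{\vec v(s)}^2\,ds<\infty$ $\PP$-almost surely, and by the first paragraph \eqref{u-tau} follows.

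\emph{Main obstacle.} The one step that will require genuine care is justifying that $\wdt\lambda$ (hence $D_3$) may be taken as large as needed: this means revisiting the computation behind \eqref{u-eq2'}--\eqref{u-eq2'-7-21} and verifying that the leading dissipative term carries a coefficient of order $\wdt\lambda-O(1)$ while every error term is $O(1)$ in $\wdt\lambda$, so that $D_3$ can indeed be taken of order $\wdt\lambda$. Everything else should be routine: the supermartingale/Doob argument of Step 1 and the maximal-inequality plus Borel--Cantelli bookkeeping of Step 2 are standard, and the exponent matching in the last display works precisely because the a.s.\ growth rate of $\vec X$ — and, via $\wdt X_i=X_ie^{-Z_i}$, of $\wdt{\vec X}$ — is a fixed constant independent of $\wdt\lambda$.
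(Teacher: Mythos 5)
Your overall architecture is appealing and in places sharper than the paper's: reducing \eqref{u-tau} to the a.s.\ finiteness of $\int_0^\infty\abs{\vec v(s)}^2ds$ is legitimate, and your Step 1 --- observing that $e^{2D_3t}U(\vec X_t,\wdt{\vec X}_t)$ is a nonnegative local supermartingale, hence a.s.\ bounded and convergent, so that $\abs{\vec Y(t)-\wdt{\vec Y}(t)}^2\le U(\vec X_t,\wdt{\vec X}_t)\le C_\omega e^{-2D_3t}$ --- yields \eqref{u-limy} more directly than the paper's route through the Burkholder--Davis--Gundy estimate \eqref{u-eq11} and Borel--Cantelli. However, the step you yourself single out as decisive is wrong: $D_3$ cannot be made large by enlarging $\wdt\lambda$. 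In the Dupire-derivative computation behind \eqref{u-eq2'} and \eqref{u-eq2'-7-21}, the delay-integral component of $U$, namely $\frac{D_2+9n\sigma^*D_1}{\gamma}\int_{-r}^0\mu(ds)\int_s^0e^{\gamma(u-s)}(\cdots)\,du$, is differentiated to produce, besides boundary terms, only $-\gamma$ times itself; no term proportional to $\wdt\lambda$ ever multiplies this memory part. Consequently an inequality of the form $dU\le-2D_3\,U\,dt+\cdots$ forces $2D_3\le\gamma<\gamma_b$ (and similarly $2D_3\le\frac{D_2+9n\sigma^*D_1}{\gamma}-D_2$), no matter how large $\wdt\lambda$ is; enlarging $\wdt\lambda$ only inflates the coefficient of the instantaneous term $\abs{1+\vec X(t)+\wdt{\vec X}(t)}^{4d_0+4}\abs{\vec Z(t)}^2$, not the decay rate of $U$. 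Since your Step 2 needs $2D_3>(8d_0+8)(A_0+1)$ to beat the growth rate $A_0+1$ you assign to $1+\abs{\vec X(t)}+\abs{\wdt{\vec X}(t)}$, the exponent comparison in your final display fails, and \eqref{u-tau} is not established as written.

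The gap is repairable within your own scheme, but not by touching $\wdt\lambda$: the culprit is your growth estimate, which invokes \eqref{EV-bounded-ne} and hence the rate $A_0+1$. Since $\Vrho=\vec 0$ has nonnegative components, \eqref{EV-bounded} gives a bound on $\Ephi V_{\vec 0}^{p_0}(\vec X_t)$ that is uniform in $t$; combining it with the maximal estimate over unit intervals (obtained as in the proof of Proposition \ref{theorem-2.4}, part (i), exactly as the paper does here) and Borel--Cantelli with thresholds $e^{\delta k}$ shows $\limsup_{t\to\infty}t^{-1}\ln\big(1+\abs{\vec X(t)}\big)\le\delta$ a.s.\ for every $\delta>0$, and the same for $\wdt{\vec X}$ via $\wdt X_i=X_ie^{-Z_i}$ together with your bound on $\sup_t\abs{\vec Z(t)}$. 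Choosing $\delta$ with $(8d_0+8)\delta<2D_3$ then gives $\int_0^\infty\abs{\vec v(s)}^2ds<\infty$ a.s.\ with the paper's (small) $D_3$, and \eqref{u-tau} follows; this is in substance what the paper's estimate \eqref{eq-8-31-20} accomplishes in quantitative form. Alternatively one could redefine $U$ and $\wdt U$ with a kernel $e^{\Lambda(u-s)}$, $\Lambda\ge 2D_3$, and a correspondingly larger weight and $\wdt\lambda$ --- but that is a modification of the Lyapunov functionals themselves, not of $\wdt\lambda$ alone, and your proposal does not provide it.
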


\begin{rem}{\rm
In the proof of the Lemma, our purpose is to prove $\lim_{t\to\infty}U(\vec X_t,\wdt {\vec X}_t)=0$\a.s To handle the diffusion part of $U(\vec X_t,\wdt {\vec X}_t)$, we need some helps from $\wdt U(\vec X_t,\wdt {\vec X}_t)$. That is why we introduced both $U(\vec X_t,\wdt {\vec X}_t)$ and $\wdt U(\vec X_t,\wdt {\vec X}_t)$ in the above.}
\end{rem}

\begin{proof}
A consequence of \eqref{u-eq2'} and \eqref{u-eq2'-7-21} is that
\begin{equation*}
d \left[e^{D_3t}\E U(\vec X_t,\wdt{\vec X}_t)\right]\leq -D_3e^{D_3t}\E U(\vec X_t,\wdt{\vec X}_t)dt,
\end{equation*}
and
\begin{equation*}
d \left[e^{D_3t}\E \wdt U(\vec X_t,\wdt{\vec X}_t)\right]\leq -D_3e^{D_3t}\E \wdt U(\vec X_t,\wdt{\vec X}_t)dt,
\end{equation*}
which implies that
\begin{equation}\label{u-eq4}
\lim_{t\to\infty}\E e^{D_3t}U(\vec X_t,\wdt{\vec X}_t)=\lim_{t\to\infty}\E e^{D_3t}\wdt U(\vec X_t,\wdt{\vec X}_t)=0,
\end{equation}
and
\begin{equation}\label{u-eq5}
\E\int_0^\infty e^{D_3t}U(\vec X_t,\wdt{\vec X}_t) dt\leq \frac{U(\Bphi,\wdt{\Bphi})}{D_3}.
\end{equation}
An application of Markov's inequality and \eqref{u-eq5}
imply that
\begin{equation}\label{r-eq-0}
\PP\left(\int_0^\infty e^{D_3t}U(\vec X_t,\wdt{\vec X}_t) dt\leq \frac{U(\Bphi,\wdt{\Bphi})}{D_3\sqrt\eps}\right)\geq 1-\sqrt\eps.
\end{equation}
To proceed, similar to the proof of Proposition 5.1 part (i) (in particular, the process of getting \eqref{supV-2}), we can obtain that for $p<p_0$,
$$\E_{\Bphi}\sup_{t\in[0,1]}V_{\vec 0}^p(\vec X_{t})\leq C_1 V_{\vec 0}^p(\vec X_0),$$
for some constant $C_1$. Then, combining with \eqref{EV-bounded}, we have
%$$
%\E V_{\vec 0}^p(\vec X_{t}) \leq C_4 (1+e^{-C_2 t} V_{\vec 0}^p(\vec X_0));
%$$
%which implies
$$\E_{\Bphi}\sup_{t\in[k,k+1]}V_{\vec 0}^p(\vec X_{t})\leq C_2(1+V_{\vec 0}^p(\vec X_0)),$$
for some constant $C_2$.
%Let $\wdt\gamma>0$ be sufficiently small.
We have for any $C>0$,
\begin{align*}
\PP&\left\{ e^{\frac{-D_3t}2} V_{\vec 0}^{4d_0+4}(\vec X_{t})\geq \frac{C}{\sqrt{\eps}}, \text{ for some } t\in[k,k+1]\right\}\\
\leq& \PP\left\{  V_{\vec 0}^{p}(\vec X_{t})\geq \frac{C e^{D_3pk/(8d_0+8)}}{\eps^{p/(8d_0+8)}}, \text{ for some } t\in[k,k+1]\right\}\\
\leq &C_2(1+V_{\vec 0}^p(\vec X_0)) \frac{\eps^{p/(8d_0+8)}}{C e^{D_3pk/(8d_0+8)}}.
\end{align*}
Thus, one has
\begin{equation}\label{eq-8-31-20}
\begin{aligned}
\PP\left\{ e^{-\frac{D_3}2 t} V_{\vec 0}^{4d_0+4}(\vec X_{t})\leq \frac{C}{\sqrt\eps} \forall t\geq 0\right\}
\geq& 1- \sum_{k=0}^\infty C_2(1+V_{\vec 0}^p(\vec X_0)) \frac{\eps^{p/(8d_0+8)}}{C e^{D_3pk/(8d_0+8)}}\\
\geq& 1- K_{C,V_{\vec 0}^p(\vec X_0)} \eps^{p/(8d_0+8)},
\end{aligned}
\end{equation}
for some finite constant $K_{C,V_{\vec 0}^p(\vec X_0)}$ depending on $C,V_{\vec 0}^p(\vec X_0)$.
Combining \eqref{eq-8-31-20} and \eqref{r-eq-0}, we can obtain
\begin{equation}\label{r-eq-5}
\begin{aligned}
\PP&\left(\int_0^\infty\left[1+|\vec X(t)|+|\wdt {\vec X}(t)|\right]^{4d_0+4}\left|\vec Y(t)-\wdt{\vec Y}(t)\right| dt\leq \frac{C}{\eps}\right)\\
&\geq 1-\sqrt\eps-K_{C,V_{\vec 0}^p(\vec X_0)}\eps^{\frac{p}{8d_0+8}}.
\end{aligned}
\end{equation}
We obtain from the definition of $\wdt{\tau}_\eps$ that
\begin{equation}\label{u-eq9}
\begin{aligned}
\PP\{\wdt{\tau}_\eps=\infty\}\geq \PP\left\{\int_0^\infty \abs{\vec v(s)}^2ds < \eps^{-1}\|\Bphi-\wdt{\Bphi}\|^2\right\}.
%\geq &1-\dfrac{\eps\E \int_0^\infty \abs{\vec v(s)}^2ds}{\|\Bphi-\wdt{\Bphi}\|^2}.
\end{aligned}
\end{equation}
Since \eqref{r-eq-5}, \eqref{u-eq9}, definition of $\vec v(\cdot)$, and
%the
Assumption \eqref{asp-unique-ipm}(iii), we
%can
obtain \eqref{u-tau} by letting $\eps\to 0$.

On the other hand, applying the Burkholder-Davis-Gundy inequality and \eqref{u-eq0'-7-20}, we have
\begin{equation}\label{u-eq10}
\begin{aligned}
\E&\sup_{t\in[0,1]}\sum_{i} \left(Y_i(t)-\wdt Y_i(t)\right)\left(g_i(\vec X_t)-g_i(\wdt{\vec X}_t)\right)dE_i(t)\\
&\leq 4\E\left(n\sigma^*\sum_{i}^n\int_0^1\left(Y_i(t)-\wdt Y_i(t)\right)^2\left(g_i(\vec X_t)-g_i(\wdt{\vec X}_t)\right)^2dt\right)^\frac 12\\
&\leq 4n\sqrt{\sigma^*D_1}\Bigg(\E\int_0^1\bigg(|1+\vec X(t)+\wdt{
\vec X}(t)|^{4d_0+4}\abs{\vec Z(t)}^4\\
&\hspace{2.5cm}+\int_{-r}^0|1+\vec X(t+s)+\wdt{
\vec X}(t+s)|^{4d_0+4}\abs{\vec Z(t+s)}^4\mu(ds) \bigg)dt\Bigg)^{\frac 12}.
\end{aligned}
\end{equation}
We obtain from \eqref{u-eq2'}
and the functional It\^o formula
 that
\begin{equation}\label{u-eq10-1111}
\begin{aligned}
\E\int_0^1&\bigg(|1+\vec X(t)+\wdt{
\vec X}(t)|^{4d_0+4}\abs{\vec Z(t)}^4\\
&\hspace{0.5cm}+\int_{-r}^0|1+\vec X(t+s)+\wdt{
\vec X}(t+s)|^{4d_0+4}\abs{\vec Z(t+s)}^4\mu(ds) \bigg)dt\\
&\leq \frac1{2D_3}\E\wdt U(\Bphi,\wdt \Bphi).
\end{aligned}
\end{equation}
Applying \eqref{u-eq10-1111} to \eqref{u-eq10} yields that
\begin{equation}\label{u-eq10-7-21}
\E\sup_{t\in[0,1]}\sum_{i} \left(g_i(\vec X_t)-g_i(\wdt{\vec X}_t)\right)dE_i(t)\leq 4n\sqrt{\sigma^*D_1}\left(\frac 1{2D_3}\E\wdt U(\Bphi,\wdt\Bphi)\right)^\frac 12.
\end{equation}
Hence, combining \eqref{u-eq2'-7-21} and \eqref{u-eq10-7-21}, by a standard argument, we conclude that
\begin{equation}\label{u-eq11}
\E\sup_{t\in[0,1]}U(\vec X_{t+t_0},\wdt{\vec X}_{t+t_0})\leq D_4\left(\E U(\vec X_{t_0},\wdt{\vec X}_{t_0})+\left(\E \wdt U(\vec X_{t_0},\wdt{\vec X}_{t_0})\right)^\frac 12\right),\;\forall t_0>0,
\end{equation}
for some constant $D_4$, independent of $\vec X_t, \wdt{\vec X}_t$. A consequence of Markov's inequality and \eqref{u-eq11} is that
\begin{equation}\label{u-eq12}
\begin{aligned}
\PP&\left\{\sup_{t\in[n-1,n]}U(\vec X_{t},\wdt{\vec X}_{t})\geq e^{-\frac{D_3n}4}\right\}\\
&\leq D_4e^{\frac{D_3n}4}\left(\E U(\vec X_{n-1},\wdt {\vec X}_{n-1})+\left(\E \wdt U(\vec X_{n-1},\wdt{\vec X}_{n-1})\right)^\frac 12\right).
\end{aligned}
\end{equation}
We obtain from \eqref{u-eq4} and \eqref{u-eq12} that
\begin{equation}\label{u-eq13}
\sum_{n=1}^\infty\PP\left\{\sup_{t\in[n-1,n]}U(\vec X_{t},\wdt{\vec X}_{t})\geq e^{-\frac{D_3n}4}\right\}<\infty.
\end{equation}
It follows from the Borel-Cantelli lemma and \eqref{u-eq13} that $\lim_{t\to\infty}U(\vec X_{t},\wdt{\vec X}_{t})=0\a.s$ and thus we get \eqref{u-limy}.
\end{proof}

Once we have Lemma \ref{lem-unique-ipm}, we can mimic the proof of \cite[Theorem 3.1]{Ha09} to obtain the uniqueness of the invariant probability measure of \eqref{Yi}, which is stated as in the following Proposition.

\begin{prop}\label{thm-unique-ipm-11}
Under Assumptions {\rm\ref{asp1}} and {\rm\ref{asp-unique-ipm}},
the solution process of
 \eqref{Yi} has at most one invariant probability measure, and moreover \eqref{Kol-Eq}
has at most one invariant probability measure concentrated on $\C^\circ_+$.
\end{prop}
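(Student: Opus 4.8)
The plan is to expand the remark preceding the statement: deduce uniqueness for \eqref{Yi} from the abstract asymptotic‑coupling criterion of Hairer, Mattingly and Scheutzow \cite[Theorem 3.1]{Ha09}, and then transfer it to \eqref{Kol-Eq} through the change of variables $\vec y=\ln\vec x$. All of the analytic input is already contained in Lemma \ref{lem-unique-ipm}; what remains is to package it. Fix $\Bphi\neq\wdt\Bphi$ in $\C^{\circ}_+$, let $\vec Y$ solve \eqref{Yi} from $\ln\Bphi$ and $\wdt{\vec Y}$ solve \eqref{Yi-tilde} from $\ln\wdt\Bphi$, both driven by the same $\vec E=\Gamma^\top\vec B$. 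Then $\big(\vec Y,\wdt{\vec Y}\big)$ under $\PP$ is a coupling whose first marginal is exactly the law of \eqref{Yi} from $\ln\Bphi$. By Assumption \ref{asp-unique-ipm}(iii) the matrix $(g_i(\Vphi)g_j(\Vphi)\sigma_{ij})_{n\times n}$ is invertible with uniformly bounded inverse on $\C^{\circ}_+$, so the extra drift in \eqref{Yi-tilde} is precisely the diffusion coefficient of $\wdt{\vec Y}$ acting on the adapted process $\vec v(\cdot)$. Girsanov's theorem, applied on the stochastic interval $[0,\wdt\tau_\eps]$ (the stopping time being introduced exactly so that the exponential generated by $-\vec v$ is a genuine martingale and not merely local, using the moment bounds \eqref{eq-8-31-20}, \eqref{r-eq-5}), certifies that the law of the ``guided'' marginal $\wdt{\vec Y}$, stopped at $\wdt\tau_\eps$, is absolutely continuous with respect to the genuine law of \eqref{Yi} from $\ln\wdt\Bphi$.

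\textbf{Invoking \cite{Ha09}.} By \eqref{u-tau}, $\PP\{\wdt\tau_\eps=\infty\}\to1$ as $\eps\to0$, and by \eqref{u-limy}, $|\vec Y(t)-\wdt{\vec Y}(t)|\to0$ a.s., hence also $|\vec X(t)-\wdt{\vec X}(t)|\to0$ a.s. Thus, for every pair of initial data, the coupling of the two laws of \eqref{Yi} constructed above assigns probability at least $\PP\{\wdt\tau_\eps=\infty\}>0$ to paths that asymptotically coincide, while its marginals are (equivalent to) the genuine laws; together with the Feller regularity of the semigroup (inherited from Proposition \ref{theorem-2.4} via the change of variables) this is exactly the hypothesis of \cite[Theorem 3.1]{Ha09}. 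Consequently \eqref{Yi} has at most one invariant probability measure: two distinct ergodic invariant measures would be mutually singular, which a generalized coupling with asymptotically agreeing paths and mutually absolutely continuous marginals forbids.

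\textbf{Back to \eqref{Kol-Eq}.} The componentwise logarithm $\Vphi\mapsto\ln\Vphi$ is a homeomorphism of $\C^{\circ}_+$ onto $\C$, and by the functional It\^o formula it conjugates the Markov semigroup of $\vec X_t$ on $\C^{\circ}_+$ with that of $\vec Y_t$ on $\C$; hence push‑forward under this map is a bijection between invariant probability measures of \eqref{Kol-Eq} concentrated on $\C^{\circ}_+$ and invariant probability measures of \eqref{Yi}. By the previous paragraph there is at most one of the latter, so at most one of the former, which is the assertion.

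\textbf{Main obstacle.} The genuinely delicate point — already absorbed into Lemma \ref{lem-unique-ipm} through the auxiliary Lyapunov functionals $U,\wdt U$ and the estimates \eqref{u-eq4}--\eqref{u-eq13}, \eqref{eq-8-31-20}, \eqref{r-eq-5} — is the control of the Girsanov density: one must guarantee that the exponential local martingale built from $\vec v$ is a true martingale up to $\wdt\tau_\eps$, so that the tilted measure is a legitimate change of measure, while simultaneously keeping $\wdt\tau_\eps=\infty$ with probability close to $1$; this is where Assumption \ref{asp-unique-ipm} is used in full. The only remaining care is bookkeeping: matching this concrete coupling to the Polish‑space framework of \cite{Ha09} so that its uniqueness conclusion applies verbatim.
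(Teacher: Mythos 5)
Your proposal is correct and follows essentially the same route as the paper: the paper likewise obtains uniqueness for \eqref{Yi} by mimicking the asymptotic-coupling argument of \cite[Theorem 3.1]{Ha09}, with the guided process \eqref{Yi-tilde}, the Girsanov shift controlled through $\vec v$, $\wdt\tau_\eps$ and Assumption \ref{asp-unique-ipm}(iii), and all quantitative input supplied by Lemma \ref{lem-unique-ipm} (\eqref{u-tau}, \eqref{u-limy}), before transferring back to \eqref{Kol-Eq} via the componentwise logarithm. Your write-up merely makes explicit the packaging that the paper leaves as a reference to \cite{Ha09}.
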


By the tightness \eqref{t-eq4} of the occupation measures and 
%in view of 
Theorem \ref{p-theorem1-111}, the existence of invariant probability measure of \eqref{Kol-Eq} concentrated on $\C^\circ_+$ is guaranteed. Combined with Proposition \ref{thm-unique-ipm-11}, we have the following Theorem to end this section.

\begin{thm}%\label{thm3.3}
Under  Assumptions {\rm\ref{asp1}}, {\rm\ref{asp-2}}, {\rm\ref{asp-lambda>0}}, and {\rm\ref{asp-unique-ipm}}, system \eqref{Kol-Eq} has a unique invariant probability measure concentrated on $\C_+^\circ$.
\end{thm}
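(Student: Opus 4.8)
The plan is to split the statement into its two natural halves---existence of an invariant probability measure supported on $\C_+^\circ$ and uniqueness of such a measure---both of which are already essentially in hand from the results established above. For existence I would fix $\Bphi\in\C_+^\circ$ and a sequence $T_k\uparrow\infty$; by the tightness \eqref{t-eq4} the family $\{\Pi_{T_k}^{\Bphi}\}$ has a subsequence (not relabeled) converging weakly to a probability measure $\pi$ on $\C_+$, and Lemma \ref{weak-converge} shows that $\pi$ is invariant for $(\vec X_t)$. To see that $\pi$ is concentrated on $\C_+^\circ$, I would exploit the moment bound of Theorem \ref{p-theorem1-111}. Since $\limsup_{t\to\infty}\Ephi V_{-\Vrho^*}^{\theta}(\vec X_t)\le K^*$ and, from the estimates in its proof, $\sup_{t\ge 0}\Ephi V_{-\Vrho^*}^{\theta}(\vec X_t)<\infty$, Cesaro averaging gives $\int_{\C_+}V_{-\Vrho^*}^{\theta}\,d\Pi_{T_k}^{\Bphi}=\frac1{T_k}\int_r^{T_k}\Ephi V_{-\Vrho^*}^{\theta}(\vec X_t)\,dt\le K^*+o(1)$. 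Extending $V_{-\Vrho^*}^{\theta}$ to all of $\C_+$ by the value $+\infty$ on $\partial\C_+$ makes it lower semicontinuous, because $x_i^{-\rho_i^*}\to\infty$ as $x_i\downarrow0$ (recall $\rho_i^*>0$), so the Fatou-type half of the portmanteau theorem yields $\int_{\C_+}V_{-\Vrho^*}^{\theta}\,d\pi\le K^*<\infty$; hence $\pi$ is carried by $\{V_{-\Vrho^*}^{\theta}<\infty\}=\{\Vphi\in\C_+:\varphi_i(0)>0\ \forall i\}$.

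To upgrade this to $\pi(\C_+^\circ)=1$---note that $\C_+^\circ\subsetneq\{\Vphi\in\C_+:\varphi_i(0)>0\ \forall i\}$, so a genuine argument is needed here---I would use invariance together with positivity of the solution: from the multiplicative structure of \eqref{Kol-Eq}, $X_i(0)>0$ forces $X_i(u)>0$ for all $u\ge0$ almost surely, so for $\pi$-a.e.\ $\Vphi$ the shifted segment $\vec X_r$ lies in $\C_+^\circ$, and therefore, using invariance of $\pi$ under the time-$r$ shift, $\pi(\C_+^\circ)=1$. This furnishes an invariant probability measure concentrated on $\C_+^\circ$.

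For uniqueness I would simply invoke Proposition \ref{thm-unique-ipm-11}, which asserts that \eqref{Kol-Eq} has at most one invariant probability measure on $\C_+^\circ$; its proof rests on Lemma \ref{lem-unique-ipm}---the asymptotic coupling \eqref{Yi}--\eqref{Yi-tilde} together with the shift-in-control estimate $\lim_{\eps\to0}\PP\{\wdt\tau_\eps=\infty\}=1$ and $|\vec Y(t)-\wdt{\vec Y}(t)|\to0$ a.s.---and on the abstract Harris-type machinery of \cite{Ha09}. Combining the two halves proves the theorem.

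The genuinely delicate point---already dispatched in the preceding lemmas---is the uniqueness step: the delay makes the Markov semigroup fail to be strong Feller, so Doob's classical argument is inapplicable, and one must instead construct the explicit coupling, control the auxiliary Lyapunov functionals $U$ and $\wdt U$, and verify the coupling estimates of Lemma \ref{lem-unique-ipm}. For the present theorem the only new bookkeeping is the no-leakage argument identifying the support of the limiting occupation measure, which is precisely where the moment bound of Theorem \ref{p-theorem1-111} enters.
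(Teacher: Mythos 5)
Your proposal is correct and follows essentially the same route as the paper: the paper's proof is a two-line assertion that tightness of the occupation measures \eqref{t-eq4} together with the moment bound of Theorem \ref{p-theorem1-111} yields existence of an invariant probability measure concentrated on $\C_+^\circ$, with uniqueness supplied by Proposition \ref{thm-unique-ipm-11}. Your additional bookkeeping (lower semicontinuity of $V_{-\Vrho^*}^{\theta}$ to prevent mass leaking to the boundary, and the time-$r$ shift plus pathwise positivity to pass from $\{\varphi_i(0)>0\ \forall i\}$ to $\C_+^\circ$) is exactly the detail the paper leaves implicit.
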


\section{Applications}\label{sec:app}
This section  presents a number of applications of our main results
%(Theorems \ref{thm3.3}, \ref{thm4.1}, and \ref{thm4.2})
to different models.
We
make use of Theorems \ref{thm3.3}
%, \ref{thm4.1}, and \ref{thm4.2}
together with the following
lemma whose proof can be found in \cite{HN18}, to characterize the persistence.

\begin{lm}\label{lm4.1}
	For any $\pi\in\M$ and $i\in I_\pi$, we have
	$\lambda_i(\pi)=0.$
\end{lm}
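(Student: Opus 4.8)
The plan is to identify, in the stationary regime $\PP_\pi$, the almost-sure per-capita growth rate of the $i$-th coordinate with $\lambda_i(\pi)$, and then to note that a nonzero value of $\lambda_i(\pi)$ would drive $X_i(t)$ either to $+\infty$ or to $0$, both of which contradict the stationarity of $\pi$ (the latter also contradicting $i\in I_\pi$).

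First I would apply the functional It\^o formula to $\ln X_i(\cdot)$; by \eqref{Kol-Eq} and $d\langle E_i,E_i\rangle_t=\sigma_{ii}\,dt$ this gives
\begin{equation}\label{lm41-1}
\ln X_i(t)=\ln X_i(0)+\int_0^t\Big(f_i(\vec X_s)-\frac{\sigma_{ii}g_i^2(\vec X_s)}{2}\Big)ds+\int_0^t g_i(\vec X_s)\,dE_i(s).
\end{equation}
Since $\pi\in\M$ is ergodic and, by Assumption \ref{asp-2} together with Lemma \ref{weak-converge} (exactly as in the proof of Lemma \ref{lem-q0}; cf.\ \eqref{t-eq7}, \eqref{h2-eq0} and Lemma \ref{lem-integrable}), the functions $f_i(\cdot)$ and $g_i^2(\cdot)$ are $\pi$-integrable, the strong law of large numbers for stationary ergodic processes (see, e.g., \cite[Theorem 4.2]{Kha12}) yields
$$
\lim_{t\to\infty}\frac1t\int_0^t\Big(f_i(\vec X_s)-\frac{\sigma_{ii}g_i^2(\vec X_s)}{2}\Big)ds=\lambda_i(\pi),\qquad \PP_\pi\text{-a.s.}
$$
The local martingale $L_t:=\int_0^t g_i(\vec X_s)\,dE_i(s)$ has quadratic variation $\langle L,L\rangle_t=\sigma_{ii}\int_0^t g_i^2(\vec X_s)\,ds$, so $\langle L,L\rangle_t/t\to\sigma_{ii}\int_{\C_+}g_i^2\,d\pi<\infty$ $\PP_\pi$-a.s.; hence the strong law of large numbers for local martingales (\cite[Theorem 1.3.4]{MAO}) gives $L_t/t\to0$ $\PP_\pi$-a.s. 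Dividing \eqref{lm41-1} by $t$ and letting $t\to\infty$ we conclude
\begin{equation}\label{lm41-2}
\lim_{t\to\infty}\frac{\ln X_i(t)}{t}=\lambda_i(\pi),\qquad \PP_\pi\text{-a.s.}
\end{equation}

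Next I would rule out $\lambda_i(\pi)\neq0$. Let $\nu_i$ be the law of $X_i(t)$ under $\PP_\pi$; by stationarity $\nu_i$ is the image of $\pi$ under $\Vphi\mapsto\varphi_i(0)$ and does not depend on $t$. If $\lambda_i(\pi)>0$, then \eqref{lm41-2} forces $X_i(t)\to\infty$ $\PP_\pi$-a.s., so for every $R>0$, $\nu_i((R,\infty))=\PP_\pi\{X_i(t)>R\}\to1$ as $t\to\infty$ (dominated convergence), whence $\nu_i((R,\infty))=1$ for all $R>0$, which is impossible for a probability measure on $[0,\infty)$. If $\lambda_i(\pi)<0$, then $X_i(t)\to0$ $\PP_\pi$-a.s., so $\nu_i([0,\delta))=1$ for every $\delta>0$, hence $\nu_i(\{0\})=1$, i.e.\ $\pi(\{\Vphi:\varphi_i(0)=0\})=1$. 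But $i\in I_\pi$ means $\pi(\C_+^{I_\pi,\circ})=1$, and every $\Vphi\in\C_+^{I_\pi,\circ}$ satisfies $\varphi_i(0)>0$; this contradiction rules out $\lambda_i(\pi)<0$ as well. Therefore $\lambda_i(\pi)=0$.

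The only genuine work is the $\pi$-integrability of $f_i$ and $g_i^2$ and the resulting identification of the limit in \eqref{lm41-2}; both are already available, being a verbatim repetition of the strong-law argument in the proof of Lemma \ref{lem-q0} (using \eqref{t-eq7}, \eqref{h2-eq0}, Lemma \ref{weak-converge} and Assumption \ref{asp-2}). The remaining stationarity/contradiction step is elementary, so I anticipate no serious obstacle.
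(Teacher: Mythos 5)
Your argument is correct, and it is essentially the standard proof that the paper itself does not reproduce but delegates to \cite{HN18}: apply the functional It\^o formula to $\ln X_i$, use the Birkhoff ergodic theorem (after securing $\pi$-integrability of $f_i$ and $g_i^2$ exactly as in Lemma \ref{lem-q0}) together with the strong law for local martingales to identify $\lim_t t^{-1}\ln X_i(t)=\lambda_i(\pi)$ $\PP_\pi$-a.s., and then contradict stationarity (and $\pi(\C_+^{I_\pi,\circ})=1$) if $\lambda_i(\pi)\neq0$. No gaps; this matches the cited proof in both structure and the auxiliary estimates invoked.
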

Moreover, it is worth noting that these sufficient conditions for persistence
%of the following examples
are sharp and are almost necessary in the sense that if they are not satisfied and critical cases are excluded, the extinction will take place, which will be seen in  part (II) \cite{NNY21-2}.

\subsection{Stochastic delay Lotka-Volterra competitive models}\label{subsec:1}
The Lotka-Volterra model, introduced in \cite{Lot25,Vol26}, is one of the most popular models in mathematical biology and
has been studied extensively in
the literature.
When two or more species live in proximity and share the same basic
resources,
%requirements,
they usually compete for food, habitat, territory, etc., we therefore have the Lotka-Volterra competitive model. To capture many complex properties in real life, other terms (white noises, Markov switching, delayed time, etc) are added to the original system.
%Recently, the
Stochastic delay Lotka-Volterra competitive models
have also been
widely studied;
see, for example, \cite{Mao04,Liu17} and references therein.
This kind model for two species has the form
\begin{equation}\label{5.1-eq0}
\begin{cases}
dX_1(t)=X_1(t)\left(a_1-b_{11}X_1(t)-b_{12}X_2(t)-\hat b_{11}X_1(t-r)-\hat b_{12}X_2(t-r)\right)dt\\
\hspace{9cm}+X_1(t)dE_1(t),\\[1ex]
dX_2(t)=X_2(t)\left(a_2-b_{21}X_1(t)-b_{22}X_2(t)-\hat b_{21}X_1(t-r)-\hat b_{22}X_2(t-r)\right)dt\\
\hspace{9cm}+X_2(t)dE_2(t).
\end{cases}
\end{equation}
Note that in the above
$X_i(t)$ is the size of the species $i$ at time $t$;
$a_i>0$ represents the growth rate of the species $i$;
$b_{ii}>0$ is the intra-specific competition of the $i^{th}$ species;
$b_{ij}\geq 0$, ($i\neq j$) stands for the inter-specific competition;
$\hat b_{ij}> -b_{ii}$ ($i,j=1,2$) (i.e., $\hat b_{ij}$ can be negative);
$r$ is the delay time;
$(E_1(t),E_2(t))^\top=\Gamma^\top\vec B(t)$ with
$\vec B(t)=(B_1(t), B_2(t))^\top$ being a vector of independent standard Brownian motions and
$\Gamma$ being a $2\times 2$ matrix such that
$\Gamma^\top\Gamma=(\sigma_{ij})_{2\times 2}$ is a positive definite matrix.

Before applying
our Theorems, let us verify our Assumptions. First, it is easy to see that there is a sufficiently large  $M_1$ such that
\begin{equation}\label{5.1-eq1}
\dfrac{\sum_{i,j=1}^2\sigma_{ij}x_ix_j}{(1+x_1+x_2)^2}\geq 2\sigma_*\text{ if }\abs{\vec x}>M_1, \vec x:=(x_1,x_2),
\end{equation}
for some $\sigma_*>0$.
There exist $0<b_2^*<b_1^*$ and $M_2>0$ satisfying
\begin{equation}\label{5.1-eq2}
\begin{aligned}
&\dfrac{\sum_{i=1}^2x_i\left(a_i-b_{i1}x_1-b_{i2}x_2-\hat b_{i1}\varphi_1(-r))-\hat b_{i2}\varphi_2(-r)\right)}{1+ x_1+x_2}\\
&<-b_1^*(1+\abs{\vec x})+ b_2^*\abs{\Vphi(-r)},
\end{aligned}
\end{equation}
for all $\Vphi\in\C_+$ satisfying $\abs{\vec x}:=\abs{\Vphi(0)}>M_2$,
and
\begin{equation}\label{5.1-eq2-11}
\begin{aligned}
&\dfrac{\sum_{i=1}^2x_i\left(a_i-b_{i1}x_1-b_{i2}x_2-\hat b_{i1}\varphi_1(-r))-\hat b_{i2}\varphi_2(-r)\right)}{1+ x_1+x_2}\\
&<\abs{\vec x}\sum_{i}a_i+ b_2^*\abs{\Vphi(-r)},\;\forall \Vphi\in\C_+.
\end{aligned}
\end{equation}
Let $M>\max\{M_1,M_2\}$, $\vec c=(1,1)$,
$$0<\gamma_b<\min\left\{\frac{b_1^*}{2\sum_{i}a_i},\frac{\sigma_*}2,\frac{b_1^*-b_2^*}{\sum_{i,j}\left(b_{ij}+|\hat b_{ij}|\right)}\right\}, \;\;0<\gamma_0<\frac{b_1^*}2-\gamma_b\sum_{i}a_i,$$
$A_1, A_2$ be such that
$$0<b_2^*+\gamma_b\sum_{i,j}|\hat b_{ij}|<A_2<A_1<b_1^*-\gamma_b\sum_{i,j}b_{ij}\text{ and } A_1-A_2<\frac{b_1^*}2,$$
and $h(\vec x):=1+\abs{\vec x}$,
$\mu$ is be the Dirac delta measure (concentrated) at $\{-r\}$, and
$$
\begin{aligned}
A_0:=&\gamma_0+A_1(1+M)+\gamma_b\left(\sum_{i}a_i+M\sum_{ij}b_{ij}+2\right)+M\sum_{i}a_i\\
&+\sup_{\abs{\vec x}<M}\left\{\dfrac{\sum_{i,j=1}^2\sigma_{ij}x_ix_j}{(1+x_1+x_2)^2}+\dfrac{\sum_{i=1}^2x_i\left(a_i-b_{i1}x_1-b_{i2}x_2\right)}{1+ x_1+x_2}\right\}.
\end{aligned}
$$
Combined with \eqref{5.1-eq1}, \eqref{5.1-eq2}, and \eqref{5.1-eq2-11},
direct calculations  lead to that \eqref{asp1-3} is satisfied and that
Assumption \ref{asp1} holds.
%Similarly, we can check that Assumption \ref{a.extn2} is also satisfied.
Moreover, it is easy to confirm that Assumption \ref{asp-2} and Assumption \ref{asp-unique-ipm}
%,and Assumption \ref{asp-ginverse}
also hold.

Applying
our Theorems in Section \ref{sec:res},
we
have that $\lambda_i(\bdelta^*)=a_i-\dfrac{\sigma_{ii}}2, i=1,2$.
Let $\C^{\circ}_{1+}:=\{(\varphi_1,0)\in\C_+: \varphi_1(s)>0\;\forall s\in[-r,0]\}$ and $\C^{\circ}_{2+}:=\{(0,\varphi_2)\in\C_+: \varphi_2(s)>0\;\forall s\in[-r,0]\}$.
In view of Theorem \ref{thm3.3}, if $\lambda_i(\bdelta^*)>0$, there is a unique invariant probability measure $\pi_i$ on $\C^{\circ}_{i+}$, $i=1,2$.
By Lemma \ref{lm4.1},
we have
$$\lambda_i(\pi_i)=a_i-\dfrac{\sigma_{ii}}2-\int_{\C^{\circ}_{i+}}\left(b_{ii}\varphi_i(0)+\hat b_{ii}\varphi_i(-r)\right)\pi_i(d\Vphi)=0, \text{ where }\Vphi=(\varphi_1,\varphi_2),$$
which implies
\begin{equation}\label{e2-ex1}
\int_{\C^{\circ}_{i+}}\left(b_{ii}\varphi_i(0)+\hat b_{ii}\varphi_i(-r)\right)\pi_i(d\Vphi)=a_i-\dfrac{\sigma_{ii}}{2}.
\end{equation}
Since $\pi_i$ is an invariant probability measure of $\{\vec X_t\}$, it is easy to see that
\begin{equation}\label{ex1-eq55}
\int_{\C^{\circ}_{i+}}\varphi_i(0)\pi_i(d\Vphi)=\lim_{T\to\infty}\dfrac 1T\int_{0}^T X_{i,t}(0)dt=\lim_{T\to\infty}\dfrac 1T\int_{0}^T X_{i}(t)dt,
\end{equation}
where $(X_{1,t},X_{2,t})=\vec X_t$. Similarly,
\begin{equation}\label{ex1-eq56}
\int_{\C^{\circ}_{i+}}\varphi_i(-r)\pi_i(d\Vphi)=\lim_{T\to\infty}\dfrac 1T\int_{0}^T X_{i}(t-r)dt.
\end{equation}
By virtue of \eqref{ex1-eq55} and \eqref{ex1-eq56}, we can prove that
\begin{equation}\label{ex-eq00}
\int_{\C^{\circ}_{i+}}\varphi_i(0)\pi_i(d\Vphi)=\int_{\C^{\circ}_{i+}}\varphi_i(-r)\pi_i(d\Vphi).
\end{equation}
Combining \eqref{e2-ex1} and \eqref{ex-eq00} yields that
$$
\int_{\C^{\circ}_{i+}}\varphi_i(0)\pi_i(d\Vphi)=\int_{\C^{\circ}_{i+}}\varphi_i(-r)\pi_i(d\Vphi)=\dfrac{a_i-\frac{\sigma_{ii}}2}{b_{ii}+\hat b_{ii}}.
$$
Therefore, we have
$$
\begin{aligned}
\lambda_2(\pi_1)&=\int_{\C^{\circ}_{1+}}\left[a_2-\frac{\sigma_{22}}{2}-b_{21}\varphi_1(0)-\hat b_{21}\varphi_1(-r)\right]\pi_1(d\Vphi)\\
&=a_2-\frac{\sigma_{22}}{2}-\left(a_1-\frac{\sigma_{11}}2\right)\cdot\dfrac{b_{21}+\hat b_{21}}{b_{11}+\hat b_{11}},
\end{aligned}
$$
and
$$
\begin{aligned}
\lambda_1(\pi_2)&=\int_{\C^{\circ}_{2+}}\left[a_1-\frac{\sigma_{11}}{2}-b_{12}\varphi_2(0)-\hat b_{12}\varphi_2(-r)\right]\pi_2(d\Vphi)\\
&=a_1-\frac{\sigma_{11}}{2}-\left(a_2-\frac{\sigma_{22}}2\right)\cdot\dfrac{b_{12}+\hat b_{12}}{b_{22}+\hat b_{22}}.
\end{aligned}
$$

If $\lambda_1(\bdelta^*)>0, \lambda_2(\bdelta^*)>0$ and $\lambda_1(\pi_2)>0, \lambda_2(\pi_1)>0$,
any invariant probability measure in $\partial\C_+$ has the form
$\pi=q_0\bdelta^*+q_1\pi_1+q_2\pi_2$ with $0\leq q_0,q_1,q_2$ and $q_0+q_1+q_2=1$.
Then, one has
$\max_{i=1,2}\left\{\lambda_i(\pi)\right\}>0$
for any $\pi$ having the form as above.
As a consequence of Theorem \ref{thm3.3}, there is a unique invariant probability measure $\pi^*$ on $\C^{\circ}_+$.
This result generalizes the results of long-term properties in \cite{Liu17}.

In the above, we considered a $2$-dimension case to illustrate
the idea as well as to simplify the explicit computation.
For the stochastic delay Lotka-Volterra competitive model with $n$-species, our results can still be applied to characterize the long-term behavior of the solution.

\subsection{Stochastic delay Lotka-Volterra predator-prey models}
To continue our study of Lotka-Volterra competitive models, this section is devoted to applying our results to stochastic Lotka-Volterra predator-prey models with time delay. Such models are
frequently used to describe the dynamics of biological systems in which two species interact, one as a predator and the other one as prey.
In this section,
we consider Lotka-Volterra predator-prey system with one prey and two competing predators as follows
\begin{equation}
\begin{cases}
dX_1(t)=X_1(t)\Big\{a_1-b_{11} X_1(t)-b_{12}X_2(t)-b_{13}X_3(t)\\
\hspace{1.5cm}-\hat b_{11}X_1(t-r)-\hat b_{12}X_2(t-r)- \hat b_{13} X_3(t-r)
\Big\}dt+X_1(t)dE_1(t),\\[1ex]
dX_2(t)=X_2(t)\Big\{-a_2+b_{21} X_1(t)-b_{22}X_2(t)-b_{23}X_3(t)\\
\hspace{1.5cm}-\hat b_{21}X_1(t-r)-\hat b_{22}X_2(t-r)- \hat b_{23} X_3(t-r)
\Big\}dt+X_2(t)dE_2(t),\\[1ex]
dX_3(t)=X_3(t)\Big\{-a_3+b_{31} X_1(t)-b_{32}X_2(t)-b_{33}X_3(t)\\
\hspace{1.5cm}-\hat b_{31}X_1(t-r)-\hat b_{32}X_2(t-r)- \hat b_{33} X_3(t-r)
\Big\}dt+X_3(t)dE_3(t),
\end{cases}
\end{equation}
where
$X_1(t)$, $X_2(t)$, and $X_3(t)$
are the densities at time $t$ of the prey, and two predators,
respectively;
$a_1>0$ is the growth rate;
$a_2,a_3>0$ are the death rate of $X_2,X_3$;
$b_{ii}>0, i=1,2,3$ denote the intra-specific competition coefficient of $X_i$;
$b_{ij}\geq 0, i\neq j=1,2,3$, in which $b_{12},b_{13}$ represent the capture rates, $b_{21},b_{31}$ represent the growth from food, and $b_{23}$ and  $b_{32}$ signify the competitions between predators (species 2 and 3);
for each $i,j\in\{1,2,3\}$, $\hat b_{ij}$ is either positive or in $(-b_{ii},0]$;
$r$ is the time delay;
$(E_1(t),E_2(t),E_3(t))^\top=\Gamma^\top\vec B(t)$ with
$\vec B(t)=(B_1(t),B_2(t), B_3(t))^\top$ being a vector of independent standard Brownian motions and
$\Gamma$ being a $3\times 3$ matrix such that
$\Gamma^\top\Gamma=(\sigma_{ij})_{3\times 3}$ is a positive definite matrix.

The model in
 the current setup, 
was considered in \cite{Liu17-dcds}.
However, by
switching the sign of $a_i$ or $b_{ij},i\neq j$, we can obtain a stochastic time-delay
Lotka-Volterra system with the prey and the mesopredator or intermediate predator.
Note that
the case involving a superpredator or top predator, was studied in \cite{Liu17-1,Wu19}, and the stochastic time-delay
Lotka-Volterra system with one predator and two preys was investigated in \cite{Gen17}.

By a
similar calculation as in Subsection \ref{subsec:1}, we can check that \eqref{asp1-3} is satisfied if we let $\vec c=\left(1,\dfrac{b_{12}}{b_{21}},\dfrac{b_{13}}{b_{31}}\right)$ and other parameters be similarly determined as in Section \ref{subsec:1}. Moreover, other assumptions also hold.

We consider the equation on the boundaries $\C_{12+}:=\{(\varphi_1, \varphi_2, 0)\in\C_+: \varphi_1(s), \varphi_2(s)\geq0\;\forall s\in[-r,0]\}$
and
$\C_{13+}:=\{(\varphi_1, 0, \varphi_3)\in\C_+: \varphi_1(s),\varphi_3(s)\geq0,\;\forall s\in[-r,0]\}$.
If $\lambda_1(\bdelta^*)>0$, there is an invariant probability measure $\pi_1$ on $\C^{\circ}_{1+}:=\{(\varphi_1, 0, 0)\in\C_+: \varphi_1(s)>0\;\forall s\in[-r,0]\}$.

In view of Lemma \ref{lm4.1}, we obtain
\begin{equation}\label{e2-ex2}
\int_{\C^{\circ}_{1+}}\left(b_{11}\varphi_1(0)+\hat b_{11}\varphi_1(-r)\right)\pi_1(d\Vphi)=a_1-\dfrac{\sigma_{11}}2.
\end{equation}
Similar to the process of getting \eqref{ex-eq00}, we  obtain from \eqref{e2-ex2} that
$$
\int_{\C^{\circ}_{1+}}\varphi_1(0)\pi_1(d\Vphi)=\int_{\C^{\circ}_{1+}}\varphi_1(-r)\pi_1(d\Vphi)=\dfrac{a_1-\frac{\sigma_{11}}2}{b_{11}+\hat b_{11}}.
$$
Therefore,
$$
\begin{aligned}
\lambda_i(\pi_1)&=\int_{\C^{\circ}_{1+}}\left[-a_i-\frac{\sigma_{ii}}{2}+b_{i1}\varphi_1(0)-\hat b_{i1}\varphi_1(-r)\right]\pi_1(d\Vphi)\\
&=-a_i-\frac{\sigma_{ii}}{2}+\left(a_1-\dfrac{\sigma_{11}}2\right)\cdot\dfrac{b_{i1}-\hat b_{i1}}{b_{11}+\hat b_{11}},\; i=2,3.
\end{aligned}
$$

In case of  $\lambda_1(\bdelta^*)>0$ and $\lambda_2(\pi_1)>0$, Theorem \ref{thm3.3} implies that
there is an invariant probability measure $\pi_{12}$ on $\C^\circ_{12+}.$
In view of Lemma \ref{lm4.1} and \eqref{ex-eq00},
we obtain
$$
\int_{\C^{\circ}_{12+}}\varphi_1(0)\pi_{12}(d\Vphi)=\int_{\C^{\circ}_{12+}}\varphi_1(-r)\pi_{12}(d\Vphi)=A_1,$$
$$ \int_{\C^{\circ}_{12+}}\varphi_2(0)\pi_{12}(d\Vphi)
=\int_{\C^{\circ}_{12+}}\varphi_2(-r)\pi_{12}(d\Vphi)=A_2,$$
where the pair $(A_1, A_2)$ is the unique solution to
$$
\begin{cases}
a_1-\frac{\sigma_{11}}{2}-\left(b_{11}+\hat b_{11}\right)A_1-\left(b_{12}+\hat b_{12}\right)A_2=0,\\[1ex]
-a_2-\frac{\sigma_{22}}{2}+\left(b_{21}-\hat b_{21}\right)A_1-\left(b_{22}+\hat b_{22}\right)A_2=0.
\end{cases}
$$
In this case,
\begin{align*}
\lambda_3(\pi_{12})&=\int_{\C^{\circ}_{12+}}\Big[-a_3-\frac{\sigma_{33}}{2}+\left(b_{31}\varphi_1(0)-\hat b_{31}\varphi_1(-r)\right)\\
&\hspace{3cm}-\left(b_{32}\varphi_2(0)+\hat b_{32}\varphi_2(-r)\right)\Big]\pi_{12}(d\Vphi)\\
&=-a_3-\frac{\sigma_{33}}{2}+\left(b_{31}-\hat b_{31}\right)A_1-\left(b_{32}+\hat b_{32}\right)A_2.
\end{align*}
Similarly, if $\lambda_1(\bdelta^*)>0$ and $\lambda_3(\pi_1)>0$, by Theorem \ref{thm3.3},
there is an invariant probability measure $\pi_{13}$ on $\C^\circ_{13+}$
and
\begin{align*}
\lambda_2(\pi_{13})&=\int_{\C^{\circ}_{13+}}\Big[-a_2-\frac{\sigma_{22}}{2}+\left(b_{21}\varphi_1(0)-\hat b_{21}\varphi_1(-r)\right)\\
&\hspace{3cm}-\left(b_{23}\varphi_3(0)+\hat b_{23}\varphi_3(-r)\right)\Big]\pi_{13}(d\Vphi)\\
&=-a_2-\frac{\sigma_{22}}{2}+\left(b_{21}-\hat b_{21}\right)\hat A_1-\left(b_{32}+\hat b_{23}\right)\hat A_3,
\end{align*}
where
$(\hat A_1, \hat A_3)$ is the unique solution to
$$
\begin{cases}
a_1-\frac{\sigma_{11}}{2}-\left(b_{11}+\hat b_{11}\right)\hat A_1-\left(b_{13}+\hat b_{13}\right)\hat A_3=0,\\[1ex]
-a_3-\frac{\sigma_{33}}{2}+\left(b_{31}-\hat b_{31}\right)\hat A_1-\left(b_{33}+\hat b_{33}\right)\hat A_3=0.
\end{cases}
$$

Because of the ergodic decomposition theorem,
every invariant probability measure on $\partial \C_+$
is a convex combination of $\delta^*, \pi_1,\pi_{12},\pi_{13}$ (when these measures exist). As a consequence,
some computations for the Lyapunov exponents
with respect to a convex combination of these ergodic measures together with an application of Theorem \ref{thm3.3} yield that
there exists a unique invariant probability measure in $\C_+^\circ$
if one of the following conditions is satisfied:
\begin{itemize}
	\item $\lambda_1(\bdelta^*)>0$, $\lambda_2(\pi_1)>0, \lambda_3(\pi_1)<0$ and $\lambda_3(\pi_{12})>0$.
	\item $\lambda_1(\bdelta^*)>0$, $\lambda_2(\pi_1)<0, \lambda_3(\pi_1)>0$    and $\lambda_2(\pi_{13})>0$.
	\item $\lambda_1(\bdelta^*)>0$, $\lambda_2(\pi_1)>0, \lambda_3(\pi_1)>0$, $\lambda_3(\pi_{12})>0$,   and $\lambda_2(\pi_{13})>0$.
\end{itemize}
The above assertions
generalize the results in \cite{Liu17-dcds}. Moreover, if we switch the sign of $a_i$ or $b_{ij},i\neq j$, we obtain another modifications of Lotka-Volterra prey-predator equation as we mentioned at the beginning of this section with modification of %slightly
the
above characterization, which improve the results in \cite{Gen17,Liu17-1,Wu19}.

Confining
our analysis to $\C_{12+}$ (this describes the evolution of one predator and its prey), we get
\begin{equation}\label{e3-ex2}
\begin{cases}
dX_1(t)=X_1(t)\left\{a_1-b_{11} X_1(t)-\hat b_{11}X_1(t-r)-b_{12}X_2(t)-\hat b_{12}X_2(t-r)
\right\}dt\\
\hspace{9cm}+X_1(t)dE_1(t),\\[1ex]
dX_2(t)=X_2(t)\left\{-a_2+b_{21}X_1(t)+\hat b_{21}X_1(t-r)-b_{22}X_2(t)-\hat b_{22}X_2(t-r)
\right\}dt\\
\hspace{9cm}+X_2(t)dE_2(t).
\end{cases}
\end{equation}
This further leads to that if $\lambda_1(\bdelta^*)>0,\lambda_2(\pi_1)>0$,
there exists a unique invariant probability measure of
\eqref{e3-ex2}
on $\C^\circ_{12+}$,
which improves the results in \cite{Liu13}.

\subsection{Stochastic delay
	replicator equations}
In evolutionary game theory, originally,
a replicator equation is a deterministic monotone, nonlinear, and non-innovative game dynamic system. Such a deterministic system has been expanded to systems with stochastic perturbations. In this section,
we consider the replicator dynamics for a game with $n$ strategies, 
involving
social-type time delay (see, e.g., \cite{AM04} for details of such delays)
and white noise perturbation.
The system of interest
can be expressed as %following equation
\begin{equation}\label{ex5-eq1}
\begin{cases}
dx_i(t)=x_i(t)\left(f_i(\vec x(t-r))-\dfrac 1X\displaystyle\sum_{j=1}^n x_j(t)f_j(\vec x(t-r))\right)dt\\
\hspace{2.5cm}+x_i(t)\left(\sigma_idB_i(t)-\dfrac 1X\displaystyle\sum_{j=1}^n \sigma_j x_jdB_j(t)\right);\;i=1,\dots,n,\\
\vec x(s)=\vec x_0(s);\;t\in[-r,0],
\end{cases}
\end{equation}
where
$X$ is the size of the populations;
$x_i(t)$
is the portion of population that has selected the $i^{th}$ strategy and the distribution of the whole population among the strategy;
the fitness functions $f_i(\cdot):\R^n_+\to\R$, $i=1,\dots,n$ are the payoffs obtained by the individuals playing the $i^{th}$ strategy;
$r$ is the time delay;
and  $\vec x_0(s)\in \Delta_X:=\{\vec x\in\R_+^n:\sum_{i=1}^nx_i=X\}$ for all $s\in[-r,0]$ is the initial value.

The replicator
equation was
introduced in 1978 by Taylor and Jonker in \cite{Tay78}.
Since then significant
contributions have been made in biology \cite{Hof98,Now04},  economics \cite{Wei97},
%as well as in
and
optimization and control for a variety of systems \cite{Bom00, Oba14,Ram10, Tem10}.
Much attention has been devoted to studying their properties.
For instance,
when $f_i(\cdot):\R^n_+\to \R, i=1,\dots,n$ are linear mappings, equation \eqref{ex5-eq1} without time delay was studied in \cite{Imh09,Imh05}. Moreover, the deterministic version of equation \eqref{ex5-eq1} was studied in \cite{AM04,Oba16}.

By a similar argument as in \cite{Oba16,Wei97}, we can show that $\Delta_X$ remains invariant\a.s As a consequence, our assumptions are verified.
Hence, our results in Theorem \ref{thm3.3}
%, Theorem \ref{thm4.1}, and Theorem \ref{thm4.2}
 holds for \eqref{ex5-eq1}.
To demonstrate, for better visualization, we apply
%Now, we make some applications of
our results to some
%be more discrete
low-dimensional systems.
First, we consider equation \eqref{ex5-eq1} in case of two dimensions.
Define
$$
\C_+^X:=\{(\varphi_1,\varphi_2):\varphi_1(s)+\varphi_2(s)=X\text{ and }\varphi_1(s),\varphi_2(s)\geq 0\text{ for all }s\in[-r,0]\},
$$
$$\partial\C_+^X:=\{(\varphi_1,\varphi_2)\in\C_+^X: \norm{\varphi_1}=0\text{ or }\norm{\varphi_2}=0\},$$
$$\C_{+}^{X,\circ}:=\{(\varphi_1,\varphi_2)\in\C_+^X: \varphi_1(s), \varphi_2(s)>0\text{ for all }s\in[-r,0]\}.$$
In this case, it is clear that there are two invariant probability measures on the boundary $\partial \C_+^X$, which are $\bdelta_1$ and $\bdelta_2$ concentrating on $(X,0)$ and $(0,X)$, respectively, where $0,X$ are understood to be constant functions.
We have
$$\lambda_1(\bdelta_2)=f_1((0,X))-f_2((0,X))-\dfrac{\sigma_1^2+\sigma_2^2}2,$$
$$\lambda_2(\bdelta_1)=f_2((X,0))-f_1((X,0))-\dfrac{\sigma_1^2+\sigma_2^2}2.$$
By Theorem \ref{thm3.3}, in case of \eqref{ex5-eq1} of $2$-dimensional systems,
if $\lambda_1(\bdelta_2)>0$ and $\lambda_2(\bdelta_1)>0$, there is a unique invariant probability measure of \eqref{ex5-eq1} on $\C^{X,\circ}_{+}$.

Next, we consider \eqref{ex5-eq1} in three dimensions. Similarly, we also define the following set
$$
\C_+^X:=\{(\varphi_1,\varphi_2,\varphi_3):\varphi_1(s)+\varphi_2(s)+\varphi_3(s)=X\text{ and }\varphi_1(s),\varphi_2(s),\varphi_3(s)\geq 0\text{ for all }s\in[-r,0]\},
$$
$$\partial\C_+^X:=\C_{12+}^X\cup\C_{23+}^X\cup\C_{13+}^X,$$
$$
\C_{ij+}^X:=\{(\varphi_1,\varphi_2,\varphi_3)\in\C_+^X: \norm{\varphi_k}=0,k\neq i,j\}, \text{ for }i\neq j\in\{1,2,3\},$$
$$\C_{+}^{X,\circ}:=\{(\varphi_1,\varphi_2,\varphi_3)\in\C_+^X: \varphi_1(s), \varphi_2(s), \varphi_3(s)>0\text{ for all }s\in[-r,0]\}.$$
Denote by $\bdelta_1,\bdelta_2,\bdelta_3$ the invariant probability measures on the boundary $\partial\C_+^X$ of \eqref{ex5-eq1}, concentrating on $(X,0,0)$, $(0,X,0)$, and $(0,0,X)$, respectively. We have
$$\lambda_i(\bdelta_1)=f_i((X,0,0))-f_1((X,0,0))
-\dfrac{\sigma_1^2+\sigma_i^2}2,i=2,3,$$
$$\lambda_j(\bdelta_2)=f_j((0,X,0))-f_2((0,X,0))
-\dfrac{\sigma_2^2+\sigma_j^2}2,j=1,3,$$
and
$$\lambda_k(\bdelta_3)=f_k((0,0,X))-f_3((0,0,X))
-\dfrac{\sigma_3^2+\sigma_k^2}2,k=1,2.$$
If $\max_{j=1,3}\lambda_j(\bdelta_2)>0$ and $\max_{k=1,2}\lambda_k(\bdelta_3)>0$,
there is a unique invariant probability measure on $\C_{23+}^X$, denoted by $\pi_{23}$.
When $\pi_{23}$ exists, we have
$$
\begin{aligned}
\lambda_1(\pi_{23})=-\dfrac{\sigma_1^2}2
+\int_{\C_{23+}^X}\bigg(f_1(\Vphi)&-\dfrac{2X\varphi_2(0)
	f_2(\Vphi)+\sigma_2^2\varphi_2^2(0)}{X^2}\\
&
-\dfrac{2X\varphi_3(0)f_3(\Vphi)
	+\sigma_3^2\varphi_3^2(0)}{X^2}\bigg)\pi_{23}(d\Vphi).
\end{aligned}
$$
%\end{align*}
By Lemma \ref{lm4.1} and $\lambda_2(\pi_{23})=\lambda_3(\pi_{23})=0$,
%and then
we have
\begin{align*}
\int_{\C_{23+}^X}&\left(\dfrac{2X\varphi_2(0)f_2(\Vphi)+\sigma_2^2\varphi_2^2(0)}{2X^2}+\dfrac{2X\varphi_3(0)f_3(\Vphi)+\sigma_3^2\varphi_3^2(0)}{2X^2}\right)\pi_{23}(d\Vphi)\\
&=\dfrac{\sigma_2^2}2+\int_{\C_{23+}^X}f_2(\Vphi)\pi_{23}(d\Vphi)\\
&=\dfrac{\sigma_3^2}2+\int_{\C_{23+}^X}f_3(\Vphi)\pi_{23}(d\Vphi).
\end{align*}
As a result,
\begin{align*}
\lambda_1(\pi_{23})&=-\dfrac{\sigma_1^2+\sigma_2^2}2
+\int_{\C_{23+}^X}\left(f_1(\Vphi)-f_2(\Vphi)\right)\pi_{23}(d\Vphi)\\
&=-\dfrac{\sigma_1^2+\sigma_3^2}2+\int_{\C_{23+}^X}\left(f_1(\Vphi)-f_3(\Vphi)\right)\pi_{23}(d\Vphi).
\end{align*}
The conditions to guarantee the existence of the unique invariant  probability measure $\pi_{12},\pi_{13}$ on the boundary $\C_{12+}^X,\C_{13+}^X$ are similarly obtained and
$\lambda_2(\pi_{13}),\lambda_3(\pi_{12})$ can be
computed similar to $\lambda_1(\pi_{23})$.
Therefore, we have the following classification for the
long-run
%of the
solution of \eqref{ex5-eq1} in three dimensions as following. System \eqref{ex5-eq1} admits a unique invariant probability measure on $\C_+^{X,\circ}$ if
\begin{itemize}
	\item $\max_{i=2,3}\lambda_i(\bdelta_1)>0$, $\max_{j=1,3}\lambda_j(\bdelta_2)>0$, $\max_{k=1,2}\lambda_k(\bdelta_3)>0$ and $\lambda_1(\pi_{23})>0$, $\lambda_2(\pi_{13})>0$, $\lambda_3(\pi_{12})>0$.
\end{itemize}
The (explicit) condition for persistence of \eqref{ex5-eq1} in $n$-dimensions is more complex. However, our results (Theorem \ref{thm3.3}) still holds and will be computable in practice under suitable conditions.
Moreover, if $r=0$ (i.e., there is no time delay) and $f_i(\cdot)$, $i=1,\dots,n$ are linear, the condition of the persistence of \eqref{ex5-eq1} in this section is equivalent to results in \cite{Imh09,Imh05}.

\subsection{Stochastic delay epidemic SIR models}\label{ex-SIR}
The SIR model is one of the basic building blocks of compartmental models, from which many
infectious disease
models are derived. The model consists of three compartments, S for the number of susceptible, I for the number of infectious, and R for the number of recovered (or immune).
First introduced by Kermack and
McKendrick in \cite{Ker27,Ker32}, the models are deemed effective to depict the spread of many common diseases with permanent immunity such as rubella, whooping cough, measles, and smallpox. A variety of modifications of original equation are introduced due to the complexity of environment. Much attention
has been devoted to analyzing the behavior of these systems; for example, see \cite{Die16, DN18} and the references therein. In this section, we study the stochastic epidemic SIR models with  time delay.

To start, we consider the equation with linear incidence rate of the following
form
\begin{equation}\label{ex3-eq1}
\begin{cases}
dS(t)=\left(a-b_1 S(t)-c_1 I(t)S(t)-c_2I(t)S(t-r)\right)dt+S(t)dE_1(t),\\[1ex]
dI(t)=\left(-b_2I(t)+c_1 I(t)S(t)+c_2I(t)S(t-r)\right)dt+I(t)dE_2(t),
\end{cases}
\end{equation}
where
$S(t)$ is the density of susceptible individuals,
$I(t)$ is the density of infected individuals,
$a>0$ is the recruitment rate of the population,
$b_i>0$, $i=1,2$ are the death rates,
$c_i>0$, $i=1,2$ are the incidence rates,
$r$ is the delayed time,
$(E_1(t),E_2(t))^\top=\Gamma^\top\vec B(t)$ with
$\vec B(t)=(B_1(t), B_2(t))^\top$ being a vector of independent standard Brownian motions, and
$\Gamma$ being a $2\times 2$ matrix such that
$\Gamma^\top\Gamma=(\sigma_{ij})_{2\times 2}$ is a positive definite matrix.
It is well-known that
%it has been noted that
the dynamics of
recovered individuals have no effect on the disease transmission dynamics and that is why we only consider the dynamics of $S(t),I(t)$ in \eqref{ex3-eq1}.

Although equation \eqref{ex3-eq1} does not have the exact form as in \eqref{Kol-Eq}, we can use the same idea and the same method to obtain similar results. First, we consider the equation on the boundary $\{(\varphi_1,0):\varphi_1(s)\geq 0\;\forall s\in[-r,0]\}$ and let $\hat S(t)$ be the solution of the equation on this boundary as following
\begin{equation}\label{ex3-eq2}
d\hat S(t)=\left(a-b_1 \hat S(t)\right)dt+\hat S(t)dE_1(t).
\end{equation}
Since the drift coefficient of this equation is negative if $\hat S(t)$ is sufficiently large and positive, if $\hat S(t)$ is sufficiently small, %by standard argument ({\color{red}see e.g., \cite{}})
we can show that there is a unique invariant probability measure $\pi$ of \eqref{ex3-eq1} on $\C_{1+}^\circ:=\{(\varphi_1,0):\varphi_1(s)> 0\;\forall s\in[-r,0]\}$. On the other hand, since $\lambda_2(\bdelta^*)=-b_2-\dfrac{\sigma_{22}}2<0$, there is no invariant probability measure in $\C_{2+}^\circ:=\{(0,\varphi_2):\varphi_2(s)>0;\forall s\in [-r,0]\}$.

Hence, we define the following threshold
\begin{equation}\label{ex3-eq3}
\lambda(\pi)=-b_2-\frac {\sigma_{22}}2+\int_{\C_{1+}^\circ}\left(c_1\varphi_1(0)+c_2\varphi_1(-r)\right)\pi(d\Vphi),
\end{equation}
whose sign will be able to characterize the permanence and extinction.
As an application of Lemma \ref{lm4.1}, we get
\begin{equation}\label{ex3-eq4}
\int_{\C_{1+}^\circ}\varphi_1(0)\pi(d\Vphi)=\frac a{b_1}.
\end{equation}
By \eqref{ex-eq00}, we have that
$$
\int_{\C_{1+}^\circ}\varphi_1(-r)\pi(d\Vphi)=\int_{\C_{1+}^\circ}\varphi_1(0)\pi(d\Vphi)=\dfrac{a}{b_1}.
$$
Therefore, under this condition, we obtain from \eqref{ex3-eq3} and \eqref{ex3-eq4} that
\begin{equation*}%\label{ex3-eq5}
\lambda(\pi)=-b_2-\frac {\sigma_{22}}2+\dfrac{a(c_1+c_2)}{b_1}.
\end{equation*}
Using the same idea and techniques, it is possible to obtain similar results to Theorem \ref{thm3.3}
%, Theorem \ref{thm4.1}, and Theorem \ref{thm4.2}
for equation \eqref{ex3-eq1}.
We have that
if $\lambda(\pi)>0$, \eqref{ex3-eq1} has a unique invariant probability measure in $\C_+^\circ$.
This characterization is equivalent to the result in \cite{QLiu16-1,QLiu16}.

In the above, we consider the linear incidence to make our computations be more explicit. The characterizations still hold for the following stochastic delay SIR epidemic model with general incidence rate
\begin{equation}\label{ex3-eq5}
\begin{cases}
dS(t)=\left(a-b_1 S(t)-I(t)f_1\big(S(t),S(t-r),I(t),I(t-r)\big)\right)dt+S(t)dE_1(t),\\[1ex]
dI(t)=\left(-b_2I(t)+I(t)f_2\big(S(t),S(t-r),I(t),I(t-r)\big)\right)dt+I(t)dE_2(t),
\end{cases}
\end{equation}
where
%$S_t,I_t$ are the segment functions, i.e $S_t\in C([-r,0],\R): S_t(s)=S(t+s),\;s\in[-r,0]$ and $I_t$ is similarly defined;
$f_i:\R^4\to \R, i=1,2$ are the incidence functions satisfying
\begin{itemize}
	\item $f_1(0,0,i_1,i_2)=f_2(0,0,i_1,i_2)=0$.
	\item there exists some $\kappa\in (0,\infty)$ such that for all $\Vphi\in\C_+$
	$$
	\begin{aligned}
	f_2\big(\varphi_1(0),&\varphi_1(-r),\varphi_2(0),\varphi_2(-r)\big)\leq \kappa f_1\big(\varphi_1(0),\varphi_1(-r),\varphi_2(0),\varphi_2(-r)\big)\\
	&\leq\kappa^2\left(1+\abs{\Vphi(0)}+\abs{\Vphi(-r)}\right).
	\end{aligned}
	$$
	\item $f_2(s_1,s_2,i_1,i_2)$ is non-decreasing in $s_1,s_2$ and is non-increasing in $i_1,i_2$.
\end{itemize}
Almost all incidence functions used in the literature, e.g., linear functional response, Holling type II functional response, Beddington-DeAngelis functional response, etc., satisfy the above conditions.
In the general case,
the system has a unique invariant probability measure in $\C_+^\circ$ if $\lambda(\pi)>0$, where
$\lambda(\pi)$ is defined as follows
$$
\lambda(\pi)=-b_2-\frac {\sigma_{22}}2+\int_{\C_{1+}^\circ}f_2\big(\varphi_1(0),\varphi_1(-r),\varphi_2(0),\varphi_2(-r)\big)\pi(d\Vphi),
$$
where
$\Vphi=(\varphi_1,\varphi_2)$ and $\pi$ is the invariant probability measure of \eqref{ex3-eq2}. These results significantly generalize and improve that of
%the results in
\cite{Che09,DN17,QLiu16-2,Mah17}.

\subsection{Stochastic delay chemostat models}\label{ex-che}
A chemostat is a bio-reactor. In a chemostat, fresh medium is continuously added, and culture liquid containing left-over nutrients, metabolic end products, and microorganisms are continuously removed at the same rate to keep a constant culture volume.
The chemostat model is based on a
technique introduced by Novick and
Szilard in \cite{Nov50} and plays an important role in microbiology, biotechnology, and
population biology.
This
section is devoted to studying a model of  $n$-microbial populations competing for a single nutrient in a chemostat with delay in uptake conversion and under effects of white noises. Precisely, the model is described by the following system of stochastic functional differential equations
\begin{equation}\label{ex4-eq1}
\begin{cases}
dS(t)=\left(1-S(t)+aS(t-r)-\displaystyle\sum_{i=1}^nx_i(t)p_i(S(t))\right)dt+S(t)dE_0(t),\\[2ex]
dx_i(t)=x_i(t)\left(p_i(S(t-r))-1\right)dt+x_i(t)dE_i(t),\;i=1,\dots,n,\\
%dx_2(t)=x_2(t)\left(p_2(S(t-r))-1\right)dt+x_2(t)dE_2(t),
\end{cases}
\end{equation}
where
$S(t)$ is the concentration of nutrient at time $t$;
$0\leq a< 1$ is a constant;
$x_i(t),i=1,\dots,n$ are the concentrations of the competing microbial populations;
$p_i(S),i=1,\dots,n$ are the density-dependent uptakes of nutrient by population $x_i$;
$r$ is the delayed time; and
$(E_0(t),\dots,E_n(t))^\top=\Gamma^\top\vec B(t)$ with
$\vec B(t)=(B_0(t),\dots, B_n(t))^\top$ being a vector of independent standard Brownian motions and
$\Gamma$ being a $(n+1)\times (n+1)$ matrix such that
$\Gamma^\top\Gamma=(\sigma_{ij})_{(n+1)\times (n+1)}$ is a positive definite matrix. Moreover,
%due to the different notation,
in this section,
%subsection
$\C:=\C([-r,0],\R^{n+1})$ instead of $\C([-r,0],\R^n)$.
The deterministic version of \eqref{ex4-eq1} is studied and the long-time behavior is characterized in \cite{Ell94,Fre89,Wol97}.
Recently, much attention is devoted to studying the related stochastic systems; see \cite{Sun18,Sun18-1,Zha19}.

It is similar to Section \ref{ex-SIR}, if we assume that $p_i:\R\to\R, i=1,\dots,n$ satisfying non-decreasing and bounded properties and $p_i(0)=0$, then our Assumptions hold. Therefore, our results in this paper can be applied to \eqref{ex4-eq1}.

Before obtaining the results in multi-dimensional systems,
%dimension,
we consider $n=1,2$.
If $n=1$, there is only one population $x_1$ together with the nutrient $S(t)$.
Similar to Section \ref{ex-SIR}, there is no invariant probability measure of $(S_t,x_{1t})$ in
$\C_{1+}^\circ:=\{(0,\varphi_1)\in\C_+:\varphi_1(s)>0,\forall s\in [-r,0]\},$
where $x_{1t}$ is the memory segment function of $x_1(t)$.
Moreover, there is a unique invariant probability measure $\pi_0$ in $\C_{0+}^\circ:=\{(\varphi_0,0)\in\C_+:\varphi_0(s)>0,\forall s\in [-r,0]\}$.
Hence, it is easy to see that for any invariant probability measure $\pi$ in $\partial\C_+$, we have
\begin{equation*}
\lambda_1(\pi)=\lambda_1(\pi_0)=-1-\dfrac {\sigma_{11}}2+\int_{\C_{0+}^\circ}p_1(\varphi_0(-r))\pi_0(d\Vphi).
%,\;i=1,\dots,n.
\end{equation*}
By applying our result, if $\lambda_1(\pi_0)>0$ then $(S_t,x_{1t})$ admits a unique invariant probability measure in $\C_+^\circ$.

We next reveal the characterization of the longtime behavior in the case $n=2$, which is similar to the case of $n=1$.
%that
There is no invariant probability measure in
$\C_{i+}^\circ:=\{(0,\varphi_1,\varphi_2)\in\C_+:\norm{\varphi_j}=0, j\neq i \text{ and } \varphi_i(s)>0,\forall s\in [-r,0]\},$
and there is a unique measure $\pi_0$ in
$\C_{0+}^\circ:=\{(\varphi_0,0,0)\in\C_+:\varphi_0(s)>0,\forall s\in [-r,0]\}$.
As characterized in the case $n=1$, if $\lambda_i(\pi_0)> 0$, where
\begin{equation*}
\lambda_i(\pi_0)=-1-\dfrac {\sigma_{ii}}2+\int_{\C_{0+}^\circ}p_i(\varphi_0(-r))\pi_0(d\Vphi),\;i=1,2,%\dots,n,
\end{equation*}
then there is a unique invariant probability measure $\pi_{0i}$ in
$C_{0i+}^\circ:=\{(\varphi_0,\varphi_1,\varphi_2)\in\C_+:\norm{\varphi_j}=0, j\neq i \text{ and } \varphi_0(s), \varphi_i(s)>0,\forall s\in [-r,0]\}$. Hence, let
\begin{equation*}
\lambda_j(\pi_{0i})=-1-\dfrac {\sigma_{jj}}2+\int_{\C_{0+}^\circ}p_j(\varphi_0(-r))\pi_{0i}(d\Vphi),\;j\neq i.
\end{equation*}
The persistence is classified as follows.
 The $(S_t,x_{1t},x_{2t})$ admits a unique invariant probability measure in $\C_+^\circ$
if $\lambda_1(\pi_0)>0$, $\lambda_2(\pi_0)>0$, $\lambda_1(\pi_{02})>0$, and $\lambda_2(\pi_{01})>0$.

The two examples in low dimension ($n=1,2$) provide a scheme to construct recursively the characterization of the longtime behavior of \eqref{ex4-eq1} in higher dimensions.
%via lower dimension.
It is difficult to show concretely in case of general functions $p_i(\cdot)$, but it
is computable in certain examples.
These classifications improve the results in \cite{Sun18,Zha19}.

\begin{rem}{\rm
		In fact, in all the examples in Sections  \ref{subsec:1}-\ref{ex-che},
		similar results can be  obtained for multi-delays or distributed delays. We used
		a single delay in this Section
		for
		simplifying the notation and calculations so as to present
		the main ideas without notation complication.
		On the other hand, if $r=0$, i.e., there is no time delay, the above results are consistent with and/or improve the existing results in the literature.
}\end{rem}

\end{document}